\let\tempsection=\section
\let\subsection=\tempsection
\let\section=\chapter
\newcommand{\ctrct}{\lrcorner\,}
\newcommand{\fdir}{[u]}
\newcommand{\sfdir}{{\scriptscriptstyle\fdir}}
\newcommand{\ethf}{\vartheta_{\sfdir}}
\newcommand{\difff}{\diff_{\sfdir}}
\newcommand{\dbarf}{\dbar_{\sfdir}}
\newcommand{\bdir}{[v]}
\newcommand{\sbdir}{{\scriptscriptstyle\bdir}}
\newcommand{\ethb}{\vartheta_{\sbdir}}
\newcommand{\diffb}{\diff_{\sbdir}}
\newcommand{\dbarb}{\dbar_{\sbdir}}
\newcommand{\ddbarb}{\diffb\dbarb}
\newcommand{\iddbarb}{\cplxi\ddbarb}
\newcommand{\dksh}[1]{\dbar_{\kashf{}_{#1}}}
\newcommand{\dperp}[1]{\eth^*_{\bot#1}}
\newcommand{\nablaol}{\nabla^{(0,1)}}
\newcommand{\nablalo}{\nabla^{(1,0)}}
\newcommand{\Tgt}[1]{\mathbf T^{#1}}
\newcommand{\Tgtf}[1]{\Tgt{#1}_u}
\newcommand{\Tgtb}[1]{\Tgt{#1}_v}
\newcommand{\cTgt}[1]{\Tgt{*\,#1}}
\newcommand{\cTgtf}[1]{\cTgt{#1}_u}
\newcommand{\cTgtb}[1]{\cTgt{#1}_v}
\newcommand{\kashf}[1]{\sheaf H^{#1}}
\newcommand{\kashfL}[1]{\kashf{#1}_{L^2}}
\newcommand{\smcform}[1]{\sheaf A_{#1}}
\newcommand{\smform}[1]{\sheaf A^{#1}}
\renewcommand{\smooth}{\smform{}}
\newcommand{\smbform}[1]{\smform{0,(#1)}}
\newcommand{\smcfb}[3][0]{\smcform{(#2,#3),#1}}
\newcommand{\smFfb}[4]{\smform{(#1,#2),(#3,#4)}}
\newcommand{\smcFfb}[4]{\smcform{(#1,#2),(#3,#4)}}
\newcommand{\smfb}[3][0]{\smform{#1,(#2,#3)}}
\newcommand{\smFB}[3]{\smform{0,#1}_{#2}(#3;L)}
\newcommand{\smFBtwo}[1][]{\smFB{q}{#1<2>}{K_c}}
\newcommand{\smfbtwo}[1][]{\smform{0,q}_{#1<2>}}
\newcommand{\smFBXtwo}[1][]{\smFB{q}{#1<2>}{X}}
\newcommand{\smFBCtwo}[1][q]{\smFB{#1}{<2>}{\cl K_c}}
\newcommand{\smFBCXtwo}{\smFBXtwo[0\,]}
\newcommand{\smFBthree}[1][]{\smFB{q+1}{#1<3>}{K_c}}
\newcommand{\Ltwo}[1]{{L_{\!2}\,}^{\!\!\!#1}}
\newcommand{\hilbsp}[1]{\mathfrak H^{#1}}
\newcommand{\hilbbsp}[1]{\Ltwo{0,(#1)}}
\newcommand{\hilbFB}[2]{\Ltwo{0,(#1,#2)}}
\newcommand{\hilbFBtwo}[1][]{\Ltwo{0,q}_{c#1\,<2>}}
\newcommand{\hFBtwo}[1][]{\Ltwo{0,q}_{#1\,<2>}}
\newcommand{\hilbFBtwoLong}[1][]{\Ltwo{0,q}_{<2>}(K_{c#1};L)}
\newcommand{\hilbFBthree}[1][]{\Ltwo{0,q+1}_{c#1\,<3>}}
\newcommand{\tamesub}{{\mathsf t}}
\newcommand{\wildsub}{{\mathsf w}}
\newcommand{\etat}{\eta_\tamesub}
\newcommand{\etaw}{\eta_\wildsub}
\newcommand{\tCurv}{\mathfrak T}
\newcommand{\wCurv}{\mathfrak W}
\newcommand{\wtCurv}{\mathfrak{wt}}
\newcommand{\chiR}{\widetilde \chi}
\newcommand{\chiX}{\chi}
\newcommand{\oTheta}{\mathcal R}
\newcommand{\pT}{\tilde p}
\newcommand{\Hform}{\mathcal H}
\newcommand{\Hbot}{H_E}
\newcommand{\altH}{\widetilde \Hform}
\newcommand{\stsum}{\sideset{}{'}\sum}
\newcommand{\dual}{\vee}
\DeclareMathOperator{\Bd}{Bd}
\newtheorem{prop}{Proposition}[section]
\newtheorem{lemma}[prop]{Lemma}
\newtheorem{thm}[prop]{Theorem}
\newtheorem{cor}[prop]{Corollary}
\theoremstyle{remark}
\newtheorem{remark}[prop]{Remark}
\theoremstyle{definition}
\newtheorem{definition}[prop]{Definition}
\numberwithin{equation}{chapter}
\newcommand{\showsym}[3][]{\nomenclature[#1]{#2}{\dotfill}}
\begin{document}

\title{The Index Theorem for quasi-Tori}


\author[Mario Chan]{Tsz On Mario Chan}
\address{Lehrstuhl Mathematik VIII \\ 
  Mathematisches Institut der Universit\"at Bayreuth \\ 
  NW II, Universit\"atstr.~30 \\ 
  95447 Bayreuth}
\email{mariocto@gmail.com}
\thanks{The author would like to thank Prof.~F.~Catanese for
  suggesting this problem and for his guidance. 
  He would also like to thank Prof.~P.~Eyssidieux for pointing out the
  mistakes in the previous dissertation version of this paper.
  The author is supported by DFG Forschergruppe 790
  ``Classification of algebraic surfaces and compact complex
  manifolds''.
}






\pagenumbering{roman}

%
%

\makeatletter

\newcommand{\@degree}{Doktorgrades (Dr. rer. nat.)}
\newcommand{\@university}{Universit\"at Bayreuth}
\newcommand{\@faculty}{Fakult\"at f\"ur Mathematik, Physik und Informatik}
\newcommand{\@address}{Bayreuth}
\newcommand{\@submitdate}{27.~November, 2012}
\newcommand{\@defencedate}{15.~Februar, 2013}
\newcommand{\authorOrigin}{Hong Kong}

\ifx\authors\@empty
\author{\hfill}
\fi

\begin{titlepage}
  \renewcommand{\baselinestretch}{1.37}
  \thispagestyle{empty}

  \null\vskip0.5in
  \begin{center}
    \hyphenpenalty=10000\Large\uppercase\expandafter{\@title}
  \end{center}
  
  \vfill
  
  \begin{center}
    \footnotesize
    DISSERTATION 

    zur Erlangung 

    des \uppercase\expandafter{\@degree}  

    der \uppercase\expandafter{\@faculty}
    
    der \uppercase\expandafter{\@university}
  \end{center}
  
  \vfill
  \begin{center}
    vorgelegt von

    \vspace{1.25cm}

    \uppercase\expandafter{\authors} 
    
    aus \authorOrigin
  \end{center} 
  
  \vfill
  \begin{center}
    {
      \renewcommand{\labelenumi}{\arabic{enumi}.}
      \parbox{0.65\textwidth}{
        \begin{enumerate}
        \item 
          Gutachter: Prof.~Dr.~Fabrizio Catanese

        \item 
          Gutachter: Prof.~Dr.~Philippe Eyssidieux

        \item 
          Gutachter: Prof.~Dr.~Ngaiming Mok

        \end{enumerate}
      }
    }
  \end{center}
  
  \vfill
  \begin{center}
    \uppercase\expandafter{\@address} 
    
    Tag der Einreichung: \@submitdate 

    \ifx\@defencedate\undefined
    \else
    Tag der Kolloquiums: \@defencedate
    \fi
  \end{center}
\end{titlepage}


\makeatother


\begin{otherlanguage}{ngerman}
  \chapter*{Erkl\"arung}
%
%

Ich versichere eidesstattlich, dass ich diese Arbeit selbst\"andig verfasst habe, und ich keine anderen als die
von mir angegebenen Quellen und Hilfsmittel benutzt habe. 

Ich best\"atige, dass Hilfe von gewerblichen Promotionsberatern bzw.~-vermittlern oder
\"ahnlichen Dienstleistern weder bisher in Anspruch genommen wurde noch k\"unftig in
Anspruch genommen wird.

Ich best\"atige, dass ich keine
fr\"uhere Promotionsversuche gemacht habe.

\vspace{3.25cm}

\noindent \hfill
\parbox[t]{0.5\textwidth}{\centering \hrulefill \\
  Unterschrift des Autors}


\end{otherlanguage}

\chapter*{Acknowledgements}
%
%

It is my pleasure to express here my gratitude to my supervisor
Prof.~Fabrizio Catanese for suggesting me this research problem and
for his continual guidance, as well as sharing his point of view
about Mathematics and a lot of his personal experience in life. 

My gratitude also goes to Prof.~Ingrid Bauer for encouraging me to
explore different fields of Mathematics. Moreover, her care to me
during my sickness made me feel like home while I was staying in a
country distant from mine.

Many thanks to all current and former colleagues in the Lehrstuhl Mathematik VIII
of Universit\"at Bayreuth, in particular to Michael L\"onne, Fabio
Perroni, Masaaki Murakami, Stephen Coughlan, Matteo Penegini, Wenfei
Liu and Yifan Chen, for their help on my thesis, inspiring discussions
on Mathematical ideas, sharing about the cultures and lifestyles of their
own countries, and, most importantly, their encouragements which
helped me to get through the most depressing period of my Ph.D.
study. Thanks also to our secretary Leni Rostock who helped to sort out all the
troubles during my stay in Bayreuth, from getting the residence permit
to finding a medical doctor. Thanks to her, we have never missed the
birthday of anybody in Lehrstuhl VIII. Wish that she would enjoy her
life after retirement.

Special thanks to my M.Phil.~supervisor Prof.~Ngaiming Mok, who taught
me the basics about the Bochner--Kodaira formulas; and to Michael
L\"onne, Florian Schrack, Sascha Weigl and Christian Glei\ss ner who
helped me to translate the abstract and summary into German.

I would also like to thank DAAD for their support under the
\foreignlanguage{ngerman}{Forschungsstipendien f\"ur Doktoranden}. 

Lastly, I would like to declare that I owe my friends outside the
Mathematics community in both Hong Kong and Germany a
lot. Without their comforts and encouragements, this thesis could
never be finished. My debts to them can never be fully redeemed. I am
also badly indebted to my parents, who have given me freedom to do whatever
I wish.


\chapter*{Abstract}
\label{abstract}


The Index theorem for holomorphic line bundles on complex tori asserts that
some cohomology groups of a line bundle vanish according to the
signature of the associated hermitian form.
In this article, this theorem is generalized to quasi-tori,
i.e.~connected complex abelian Lie groups which are not necessarily
compact. In view of the Remmert--Morimoto decomposition of quasi-tori
as well as the K\"unneth formula, it suffices to consider only 
Cousin-quasi-tori, i.e.~quasi-tori which have no non-constant
holomorphic functions.
The Index theorem is generalized to holomorphic line bundles, both
linearizable and non-linearizable, on Cousin-quasi-tori using
$L^2$-methods coupled with the Kazama--Dolbeault isomorphism and
Bochner--Kodaira formulas. 



\begin{otherlanguage}{ngerman}
%
%

\chapter*{Zusammenfassung}

Ein \emph{Quasi-Torus} ist eine zusammenh\"angende komplexe abelsche Lie-Gruppe $X = \fieldC^n / \Gamma$,
wobei $\Gamma$ eine diskrete Untergruppe von $\fieldC^n$ ist. $X$ hei\ss t
\emph{Cou\-sin-Qua\-si-To\-rus}, wenn alle holomorphen Funktionen auf $X$ konstant sind. Ist $X$ kompakt, so ist $X$ ein \emph{komplexer Torus}. 

Nach einem Satz von Remmert und Morimoto (vgl.~\cite{Morimoto} oder
\cite{Cap&Cat}*{Prop.~1.1})
gibt es f\"ur jeden Quasi-Torus $X$ eine Zerlegung $X \cong \fieldC^a \times
(\fieldC^*)^b \times X'$, wobei $X'$ ein Cousin-Quasi-Torus ist. Das Ziel des vorliegenden Artikels ist, das Verschwinden von Kohomologiegruppen von Geradenb\"undeln auf $X$ zu untersuchen. Die K\"unnethformel (vgl.~\cite{Kaup}) besagt, dass sich die Kohomologiegruppen von $X$ in direkte Summen von topologischen Tensorprodukten von Kohomologiegruppen von $\fieldC^a \times (\fieldC^*)^b$ und des Cousin-Quasi-Torus $X'$ zerlegen lassen. Man wird dadurch auf den Fall gef\"uhrt, dass $X$ ein Cousin-Quasi-Torus ist, da $\fieldC^a \times (\fieldC^*)^b$ Steinsch ist und somit alle h\"oheren Kohomologiegruppen (mit Grad $\geq 1$) von koh\"arenten Garben verschwinden. Es wird also im vorliegenden Artikel angenommen, dass $X$ ein Cousin-Quasi-Torus ist.

Sei $F$ der maximale komplexe Unterraum von $\fieldR\Gamma$ und $m := \dim_\fieldC F$. Wie im kompakten Fall kann jedem holomorphen Geradenb\"undel $L$ eine hermitesche Form $\Hform$ auf $\fieldC^n$ zugeordnet werden, deren Imagin\"arteil $\Im \Hform$ mit der ersten Chernklasse $c_1(L)$ von $L$ assoziiert ist und ganzzahlige Werte in $\Gamma \times \Gamma$ annimmt. Im Unterschied zum kompakten Fall ist $\Hform$ nicht eindeutig.
Lediglich die Einschr\"ankung von $\Im \Hform$ auf $\fieldR\Gamma
\times \fieldR\Gamma$, und somit $\Hform|_{F\times F}$, ist eindeutig bestimmt. Dies macht zumindest plausibel, dass nur $\Hform|_{F\times
  F}$ anstelle von $\Hform$ f\"ur die Eigenschaften von $L$ verantwortlich ist. Die vorliegende Dissertation widmet sich dem Beweis des folgenden Satzes:
{
\theoremstyle{plain}
\newtheorem*{idxthm-ger}{Index-Satz f\"ur Cousin-Quasi-Tori}
\begin{idxthm-ger}
  Sei $X = \fieldC^n / \Gamma$ ein Cousin-Quasi-Torus, $F$ der
  maximale komplexe Unterraum von $\fieldR\Gamma$, $L$ ein holomorphes
  Geradenb\"undel auf $X$ und $\Hform$ eine mit $L$ assoziierte
  hermitesche Form auf $\fieldC^n \times \fieldC^n$. Sei $m := \dim_\fieldC F$. Die Einschr\"ankung
  $\Hform|_{F\times F}$ habe $s_F^-$ negative und $s_F^+$
  positive Eigenwerte. Dann gilt
  \begin{equation*}
    H^q(X,L) = 0 \quad\text{ f\"ur }\; q < s_F^- \;\text{ oder }\; q > m-s_F^+ \; . 
  \end{equation*}
\end{idxthm-ger}
}
Dieser Satz wird zur\"uckgef\"uhrt auf den Index-Satz f\"ur komplexe Tori, wie er von Mumford \cite{Mumford}, Kempf \cite{Kempf}, Umemura \cite{Umemura},
Matsushima \cite{Matsushima} und Murakami \cite{Murakami} f\"ur kompakte $X$ bewiesen wurde. Da $X$ stark $(m+1)$-vollst\"andig ist
(vgl.~\cite{Kazama_pseudoconvex}; siehe auch \S
\ref{sec:pseudoconvexity}), enth\"alt der Satz auch einen Spezialfall des Resultats von Andreotti und Grauert, das besagt, dass $H^q(X, \sheaf
F) = 0$ ist f\"ur alle $q \geq m+1$ und f\"ur jede koh\"arente analytische Garbe $\sheaf F$ auf $X$ (vgl.~\cite{Andre&Grauert}).

Das Verschwinden von $H^q(X,L)$ kann unter Verwendung der
Dol\-beault-I\-so\-mor\-phis\-men auf gewisse $\dbar$-Glei\-chun\-gen
f\"ur $L$-wertige $(0,q)$-Formen zurückgef\"uhrt werden. Diese
k\"onnen mit $L^2$-Methoden gel\"ost werden. Man zeigt zun\"achst die
Existenz einer formalen L\"osung einer $\dbar$-Gleichung in einem
Hilbertraum, indem man die ben\"otigte $L^2$-Absch\"atzung nachweist,
und beweist dann die Glattheit der L\"osung. Letzteres kann mit Hilfe
der Regularit\"atstheorie von $\dbar$-Operatoren erledigt werden, also
ist der entscheidende Schritt der Nachweis der be\-n\"o\-tig\-ten
$L^2$-Absch\"atzungen. Diese kann man durch Anwendung der
Boch\-ner--Ko\-dai\-ra-Un\-glei\-chun\-gen bekommen.

Jeder Cousin Quasi-Torus $X$ hat eine Faserb\"undelstruktur \"uber einem komplexen Torus $T$ 
mit steinschen Fasern (siehe \S \ref{sec:fibre-bundle} und (\ref{eq:fibration})).
Mit Hilfe der Lerayschen Spektralsequenz folgt 
\begin{equation*}
  H^q(X,L) \isom H^q(T,p_*\holo_X(L)) \quad\text{f\"ur alle }q \geq 0 \; ,
\end{equation*}
wobei $p \colon X \to T$ die Projektion aus (\ref{eq:fibration}) ist.
Die Idee ist jetzt zu zeigen, dass der Dolbeault Komplex der Garben 
$\paren{\smform{0,\bullet}_T \otimes_{\holo_T} p_*\holo_X(L), \dbar}$,
eine azyklische Aufl\"osung von  $p_*\holo_X(L)$ auf  $T$ ist
und das Verschwinden der Kohomologie durch L\"osen der $\dbar$-Gleichungen zu zeigen.
Kazama \cite{Kazama} und
Kazama--Umeno \cite{Kazama_Dolbeault-isom} geben eine leicht
ver\"anderte Formulierung, sie betrachten die Aufl\"osung 
von  $\holo_X(L)$ durch einen Unterkomplex
$\paren{\kashf{0,\bullet}(L), \dbar}$ von 
$\paren{\smform{0,\bullet}_X(L), \dbar}$
(siehe \S \ref{sec:kazama_resolution} f\"ur die Definition von $\kashf{0,q}(L)$).
Der Teilkomplex ist ebenfalls eine azyklische Aufl\"osung von $\holo_X(L)$ auf $X$
und liefert damit den Kazama--Dolbeault Isomorphismus
(vgl.~\cite{Kazama_Dolbeault-isom}, siehe auch Theorem 
\ref{rem:Kazama-Dolbeault}).
Letzterer Ansatz wird hier aufgegriffen.
Das Ziel der Darstellung ist dann die L\"osung der $\dbar$-Gleichung $\dbar\xi = \psi$ f\"ur ein gegebenes $\psi \in \Gamma(X, \kashf{0,q}(L))$ mit $\dbar\psi = 0$.

Jedes Geradenb\"undel $L$ auf $X$ kann durch ein System von Automorphiefaktoren definiert werden, die in eine zur Appell--Humbert-Normalform analoge Normalform \"ubergef\"uhrt werden k\"onnen, die gegeben ist durch (vgl.~\cite{Cap&Cat}*{\S 2.2} und \cite{Vogt}*{\S 2})
\begin{equation*}
  \varrho(\gamma)e^{\pi \Hform(z,\gamma) + \frac{\pi}{2}\Hform(\gamma,\gamma)+
    f_\gamma(z)} \qquad \forall \: \gamma\in\Gamma \; ,
\end{equation*}
wobei $\varrho$ ein Halbcharakter auf $\Gamma$ und
$\set{f_\gamma(z)}_{\gamma\in\Gamma}$ ein additiver Kozykel ist
(vgl. \cite{Cap&Cat}*{\S 2.2} und \cite{Vogt}*{\S 2}, siehe auch
(\ref{eq:f-gamma-properties})). Wenn
$\set{f_\gamma(z)}_{\gamma\in\Gamma}$ ein Korand ist, so wird $L$ als \emph{linearisierbar} bezeichnet; andernfalls als
\emph{nicht linearisierbar}. Indem man den Trick verwendet, den Murakami in \cite{Murakami} f\"ur den kompakten Fall benutzt hat (siehe \S
\ref{sec:curv-terms-murakami-trick}), n\"amlich die Metrik $g$ so abzu\"andern, dass der vom linearen Teil (dem zahmen Teil) von $L$ in den Basisrichtungen kommende Kr\"ummungsterm von unten beschr\"ankt ist, wenn $q$ im gegebenen Bereich liegt, kann man die ben\"otigten $L^2$-Absch\"atzungen erhalten, wenn $L$ linearisierbar ist (siehe \S \ref{sec:proof-linearizable}). Dies beweist den Index-Satz f\"ur
linearisierbare $L$ (siehe Theorem \ref{thm:proof-linearizable}).

Beim Nachweis der ben\"otigten $L^2$-Absch\"atzungen f\"ur nicht linearisierbare $L$ auf $X$ gibt eine zus\"atzliche technische Schwierigkeit, die von dem vom nichtlinearen Teil (dem wilden Teil) von $L$ kommenden Kr\"ummungsterm herr\"uhrt.
F\"ur diesen wird Takayama's schwaches $\diff\dbar$-Lemma (\cite{Takayama}*{Lemma
  3.14}; siehe auch \S \ref{sec:bound-wCurv}) 
angewandt, um den Term auf relativ kompakten Teilmengen von $X$ zu beschr\"anken.
Dadurch erh\"alt man die ben\"otigten $L^2$-Absch\"atzungen nicht auf $X$, sondern lediglich auf der aussch\"opfenden Familie $\seq{K_c}_{c \in \fieldR_{> 0}}$ von pseudokonvexen relativ kompakten Teilmengen. Man erh\"alt dann eine Folge $\seq{\xi_\nu}_{\nu \geq 1}$ von lokalen L\"osungen, so dass
$\dbar \xi_\nu = \psi|_{\cl K_\nu}$ ist f\"ur ein gegebenes $\psi \in \Gamma(X,\kashf{0,q}(L)) \cap \ker \dbar$ und f\"ur alle
ganzen Zahlen $\nu \geq 1$. Indem man ein Argument im Beweis von Theorem B f\"ur Steinsche R\"aume in \cite{Grauert&Remmert}*{Ch.~IV, \S 5} nachvollzieht,
speziell indem man eine Approximation vom Runge-Typ verwendet, kann
man die lokalen L\"osungen $\xi_\nu$ so korrigieren, dass sie auf
jedem $K_c$ konvergieren, was dann eine globale L\"osung f\"ur alle
$q$ im gegebenen Bereich liefert (siehe \S
\ref{sec:proof-special-q}). Der Beweis des Index-Satzes ist damit
vollst\"andig. 


\end{otherlanguage}


\tableofcontents


 \pagenumbering{arabic}
 \setcounter{page}{1}

\section{Introduction and the main theorem}
\label{sec:intro}

%
%

A \emph{quasi-torus} is a complex abelian Lie group $X = \fieldC^n / \Gamma$,
where $\Gamma$ is a discrete subgroup of $\fieldC^n$. 
$X$ is said to be a \emph{Cousin-quasi-torus} if all holomorphic functions on $X$ are
constant functions.\footnote{A Cousin-quasi-torus is also called a
\emph{toroidal group} or \emph{$(H,C)$-group} in literature, where the latter means
that all \underline{h}olomorphic functions are \underline{c}onstant
(ref.~\cite{Abe&Kopfermann}*{Def.~1.1.1}).}
$X$ is the familiar
\emph{complex torus} when it is compact, i.e.~when $\rk \Gamma = 2n$. 

The study of quasi-tori dates back to the early 20th century when
Cousin studied the triply periodic functions of two complex
variables (\cite{Cousin}). There he showed the existence of
2-dimensional quasi-tori without non-constant holomorphic
functions. He also gave, among other things, a complete description
of holomorphic line bundles on quasi-tori of
dimension 2 and their sections using a method of asymptotic counting of zeros of the sections.
In the 60's, Kopfermann (\cite{Kopfermann}) studied systematically
toroidal groups of arbitrary dimensions with a view to generalize the
theory of abelian functions on complex tori. He also gave an example
of a non-compact toroidal group with no non-constant meromorphic
functions. Morimoto (\cite{Morimoto1} and \cite{Morimoto}) studied
Cousin-quasi-torus as the maximal toroidal subgroup of a complex (not
necessarily abelian) Lie group, aiming to classify non-compact complex
Lie groups. He classified all 3-dimensional abelian complex Lie
groups. In the early 70's, Andreotti and Gherardelli gave seminars on
quasi-abelian varieties, i.e.~Cousin-quasi-tori which possess
structures of quasi-projective algebraic varieties
(\cite{Andre&Ghera}).
They showed that, among other things, a Cousin-quasi-torus is a
quasi-abelian variety if and only if the Generalized Riemann Relations
are satisfied on it.
Later on, among other contributors, Kazama (\cite{Kazama_pseudoconvex}
and \cite{Kazama}), Pothering (\cite{Pothering}), Hefez
(\cite{Hefez}), Vogt (\cite{Vogt}), Huckleberry and Margulis
(\cite{Huckleberry&Margulis}), Abe (\cite{Abe_manuscripta} and
\cite{Abe_toroidal}), Capocasa and Catanese (\cite{Cap&Cat} and
\cite{Cap&Cat2}), and Takayama (\cite{Takayama}) made some direct
contributions to the theory of quasi-tori and Cousin-quasi-tori. A
brief exposition of the historical development of the Generalized
Riemann Relations can be found in \cite{Cap&Cat}*{p.~29}, and the
Introduction of \cite{Abe&Kopfermann} describes a brief chronology of
the study of toroidal groups in general.

The current research stems from the study of Capocasa and Catanese
(ref.~\cite{Cap&Cat} and \cite{Cap&Cat2}). 
In \cite{Cap&Cat}, they gave an affirmative answer to a long standing
problem of whether the existence of a non-degenerate meromorphic
function on a quasi-torus is equivalent to the Generalized Riemann
Relations. 
In \cite{Cap&Cat2}, they moved on to prove the Lefschetz
type theorems on quasi-tori in the best form, based on a statement of Abe
with an erroneous proof in
\cite{Abe_nagoya}*{Thm.~6.4} (see \cite{Cap&Cat2}*{Corollary
  1.2}).\footnote{Th\'eor\`eme 6.4 in \cite{Abe_nagoya} asserts that,
  on a non-compact toroidal group $X$, there exists a constant $c > 0$
  such that, for any holomorphic line bundle $L$ with an associated
  hermitian form $\Hform$ on $\fieldC^n$ such that $\Hform|_{F\times F} > c I_m$
  (where $I_m$ is the $m\times m$-identity matrix and $F$ is the
  maximal complex subspace of $\fieldR\Gamma$; see \S
  \ref{sec:notations}), $H^0(X, L)$ is non-trivial, and in fact
  infinite-dimensional.}
Abe's statement is then substituted by a result proven by Takayama
(\cite{Takayama1}*{Thm.~1.3 and Thm.~6.1}).\footnote{Theorem 1.3 and
  6.1 in \cite{Takayama1} together asserts that, for any positive line
  bundle $L$ on a non-compact toroidal group $X$, there exists an
  explicitly given integer $\mu_0>0$ such that $H^0(X, L^{\otimes
    \mu})$ is non-trivial for all $\mu \geq \mu_0$. Corollary 1.2 in
  \cite{Cap&Cat2} holds true by applying Takayama's result and
  Proposition 1.1 in \cite{Cap&Cat2}. Takayama also gives a different
  proof of a weaker form of Lefschetz type theorems in
  \cite{Takayama}.} 
These results clarify some basic properties of 
meromorphic functions and global sections of holomorphic line bundles on
quasi-tori. 
This article goes a step further into the
investigation of the higher cohomology groups of holomorphic line
bundles on quasi-tori. The aim is to generalize the
Index theorem on tori to quasi-tori.

\subsection{The main theorem}
\label{sec:the-main-thm}

Denote the $\fieldC$-span and $\fieldR$-span
of $\Gamma$ by $\fieldC\Gamma$ and $\fieldR\Gamma$ respectively. 
Let $\pi \colon \fieldC^n \to X$ be the natural projection. 
Then $K := \pi(\fieldR\Gamma) = \fieldR\Gamma / \Gamma$\showsym[k]{$K$}{$:=
  \fieldR\Gamma / \Gamma$, the maximal compact subgroup of $X$} is the
maximal compact subgroup of $X$, and $F 
:= \fieldR\Gamma\cap\cplxi\fieldR\Gamma$\showsym[f]{$F$}{$:=
  \fieldR\Gamma\cap\cplxi\fieldR\Gamma$, the maximal complex subspace
  in $\fieldR\Gamma$} is the maximal complex subspace in
$\fieldR\Gamma$.

By a theorem of Remmert and Morimoto (ref.~\cite{Morimoto}, see also
\cite{Cap&Cat}*{Prop.~1.1}),
if $X$ is a quasi-torus, there is a decomposition $X \cong \fieldC^a
\times (\fieldC^*)^b \times X'$, where $X'$ is Cousin.
The aim of this article is to investigate the vanishing of cohomology
groups of holomorphic line bundles on $X$.
The K\"unneth formula (ref.~\cite{Kaup})
asserts that the cohomology groups on $X$ decompose into direct sum of
topological tensor products of cohomology groups on $\fieldC^a \times
(\fieldC^*)^b$ and the Cousin-quasi-torus $X'$. 
In view of this, since
$\fieldC^a \times (\fieldC^*)^b$ is Stein and thus all higher
cohomology groups (with degree $\geq 1$) of coherent sheaves vanish,
one is reduced to the case where $X$ is Cousin. 
In what follows,
\emph{$X$ is assumed to be a non-compact Cousin-quasi-torus} unless
otherwise stated.
In this case, $\fieldC\Gamma = \fieldC^n$, and $\rk \Gamma = \dim_\fieldR
\fieldR\Gamma = n + m$ for some integer $m$ such that $0 < m
< n$. 
Note that $m$ is the complex dimension of $F$.

Given a holomorphic line bundle $L$ on $X$, it is analogous to the
compact case that there is a hermitian form $\Hform$ on $\fieldC^n
\times \fieldC^n$ associated to $L$, whose imaginary part $\Im \Hform$
takes integral values on $\Gamma\times\Gamma$ and corresponds to the
first Chern class $c_1(L)$ of $L$
(ref.~\cite{Cap&Cat}).
$\Im\Hform$ is uniquely determined only on $\fieldR\Gamma \times
\fieldR\Gamma$, so $\Hform$ is uniquely determined only on $F \times F$.
The following theorem is a generalization of the Index theorem on
complex tori (ref.~\cite{Mumford}*{p.~150}, \cite{Murakami} or \cite{Bir&Lange}*{\S
  3.4})\footnote{The Index theorem on complex tori was first proven by
  Mumford \cite{Mumford} and Kempf \cite{Kempf} in the algebraic case,
  and later by Umemura \cite{Umemura}, Matsushima \cite{Matsushima}
  and Murakami \cite{Murakami} in the analytic case.} to
Cousin-quasi-tori, which is the main result of this article.
\begin{thm} \label{thm:main_thm}
  Let $X = \fieldC^n / \Gamma$ be a Cousin-quasi-torus, $F$ the
  maximal complex subspace of $\fieldR\Gamma$, $L$ a holomorphic line
  bundle on $X$, and $\Hform$ a hermitian form on $\fieldC^n \times
  \fieldC^n$ associated to $L$.
  Let $m := \dim_\fieldC F$. 
  Suppose $\Hform|_{F\times F}$ has respectively $s_F^-$ negative and
  $s_F^+$ positive eigenvalues\showsym[s]{$s_F^+$ (resp.~$s_F^-$)}{number of positive
    (resp.~negative) eigenvalues \\ of $\Hform"|"_{F\times F}$}.
  Then one has
  \begin{equation*}
    H^q(X,L) = 0 \quad\text{ for }\; q < s_F^- \;\text{ or }\; q > m-s_F^+ \; . 
  \end{equation*}
\end{thm}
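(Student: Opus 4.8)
The plan is to translate the vanishing of $H^q(X,L)$ into the solvability of the equation $\dbar\xi=\psi$ for every $\dbar$-closed section $\psi\in\Gamma(X,\kashf{0,q}(L))$. By the Kazama--Dolbeault isomorphism, $H^q(X,L)$ is computed by the subcomplex $\paren{\kashf{0,\bullet}(L),\dbar}$ of the full Dolbeault complex, which is adapted to the fibre-bundle structure $p\colon X\to T$ over the $m$-dimensional complex torus $T$ with Stein fibres; consequently the genuine analysis lives only in the $m$ base directions, exactly matching the appearance of $m=\dim_\fieldC F$ in the statement. Solvability of $\dbar\xi=\psi$ will follow from an a priori $L^2$-estimate on a suitable Hilbert-space completion of $\kashf{0,q}(L)$, so the whole problem reduces to producing that estimate whenever $q<s_F^-$ or $q>m-s_F^+$.

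First I would put $L$ into Appell--Humbert normal form, writing its automorphy factors as $\varrho(\gamma)\,e^{\pi\Hform(z,\gamma)+\frac{\pi}{2}\Hform(\gamma,\gamma)+f_\gamma(z)}$. This splits the curvature appearing in the Bochner--Kodaira formula into a \emph{tame} contribution $\tCurv$, coming from the hermitian exponent, and a \emph{wild} contribution $\wCurv$, coming from the additive cocycle $\set{f_\gamma}$. For the tame part the eigenvalue hypothesis on $\Hform|_{F\times F}$ is precisely what is needed: when $q<s_F^-$ or $q>m-s_F^+$ the curvature operator acting on $(0,q)$-forms in the base directions has a definite sign, and I would apply Murakami's trick of modifying the metric $g$ so that $\tCurv$ is bounded below by a positive constant. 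If $L$ is linearizable, then $\set{f_\gamma}$ is a coboundary, $\wCurv$ can be removed altogether, and the Bochner--Kodaira inequality yields the desired $L^2$-estimate directly on all of $X$; this already settles the theorem for linearizable $L$.

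The main obstacle is the wild term $\wCurv$ in the non-linearizable case: the non-trivial cocycle $\set{f_\gamma}$ prevents $\wCurv$ from being bounded globally, so the Bochner--Kodaira estimate cannot be made uniform on $X$. To cope with this I would invoke Takayama's weak $\diff\dbar$-lemma to dominate $\wCurv$ on each member of an exhausting family $\seq{K_c}_{c\in\fieldR_{>0}}$ of pseudoconvex, relatively compact subsets, accepting constants that deteriorate as $c\to\infty$. For a fixed $\dbar$-closed $\psi$ this produces only a sequence of local solutions $\seq{\xi_\nu}_{\nu\geq1}$ with $\dbar\xi_\nu=\psi|_{\cl K_\nu}$, with no a priori control of their behaviour as $\nu\to\infty$.

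Finally I would pass from local to global by a Runge-type approximation, imitating the classical proof of Theorem B for Stein spaces. The successive differences $\xi_{\nu+1}-\xi_\nu$ are $\dbar$-closed on $K_\nu$ and can therefore be approximated, on the smaller $K_{\nu-1}$, by globally defined $\dbar$-closed forms; correcting the $\xi_\nu$ inductively by such approximants makes the corrected sequence converge uniformly on every $K_c$. The limit is a global $\xi$ with $\dbar\xi=\psi$ on all of $X$, which gives $H^q(X,L)=0$ in the asserted range and completes the proof. I expect the construction and control of these corrections --- together with verifying that Takayama's lemma delivers $\wCurv$ in a form compatible with the Bochner--Kodaira estimate --- to be the technical heart of the argument.
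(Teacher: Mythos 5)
Your proposal is correct and follows essentially the same route as the paper: Kazama--Dolbeault reduction to $\dbar$-equations, the tame/wild splitting of the curvature with Murakami's metric trick for the tame part, Takayama's weak $\diff\dbar$-lemma on the exhaustion $\seq{K_c}$ for the wild part, and a Runge-type approximation in the style of Theorem B to glue the local solutions. The only substantive ingredient you do not mention is the need to enlarge $\Re\Hform|_{E\times E}$ (possible because $\Hform$ is only determined on $F\times F$) so as to absorb the mixed fibre--base curvature terms, but this is a technical refinement of the same strategy rather than a different approach.
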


Let $\holoform_X^p$ be the sheaf of germs of holomorphic $p$-forms on
$X$, and set $\holoform_X^p(L) := \holoform_X^p \otimes_{\holo_X} \holo_X(L)$.
Since the cotangent bundle of $X$ is trivial, one has
$\holoform_X^p(L) \isom \bigoplus^{\binom np} \holo_X(L)$, and thus
$H^q(X,\holoform_X^p(L)) \isom \bigoplus^{\binom np} H^q(X,L)$.
Therefore, one has the following
\begin{cor}
  With the same assumptions as in Theorem \ref{thm:main_thm}, one has,
  for any $p \geq 0$,
  \begin{equation*}
    H^q(X,\holoform_X^p(L)) = 0 \quad\text{ for }\; q < s_F^- \;\text{
      or }\; q > m-s_F^+ \; . 
  \end{equation*}
\end{cor}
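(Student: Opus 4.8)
The plan is to reduce the statement entirely to Theorem \ref{thm:main_thm} by exploiting the fact that $X$, being a complex Lie group, has trivial cotangent bundle. Concretely, the translation-invariant holomorphic $1$-forms $dz_1, \dots, dz_n$ on $\fieldC^n$ are $\Gamma$-invariant and therefore descend to a global holomorphic frame of $\holoform_X^1$, giving $\holoform_X^1 \isom \bigoplus^n \holo_X$. Taking $p$-th exterior powers, the wedge products $dz_{i_1} \wedge \cdots \wedge dz_{i_p}$ with $1 \le i_1 < \cdots < i_p \le n$ furnish a global frame of $\holoform_X^p$, so that $\holoform_X^p \isom \bigoplus^{\binom np} \holo_X$ as sheaves of $\holo_X$-modules. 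This is exactly the observation recorded in the paragraph preceding the corollary, and it is the only geometric input beyond the main theorem.

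First I would tensor this isomorphism with $\holo_X(L)$ over $\holo_X$, which yields $\holoform_X^p(L) \isom \bigoplus^{\binom np} \holo_X(L)$ as coherent analytic sheaves on $X$. Since sheaf cohomology is an additive functor and commutes with finite direct sums, this sheaf isomorphism induces $H^q(X, \holoform_X^p(L)) \isom \bigoplus^{\binom np} H^q(X, L)$ for every $q \ge 0$. Finally, Theorem \ref{thm:main_thm} asserts that each summand $H^q(X, L)$ vanishes whenever $q < s_F^-$ or $q > m - s_F^+$, and a finite direct sum of zero groups is zero; this gives the claimed vanishing for all $p \ge 0$, completing the argument.

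The proof is essentially immediate, with all the analytic content already concentrated in Theorem \ref{thm:main_thm}. There is no genuine obstacle here: the two facts used beyond the main theorem, namely the triviality of the cotangent bundle of a complex Lie group and the compatibility of cohomology with finite direct sums, are standard and independent of both the line bundle $L$ and the particular quasi-torus structure of $X$. The only point I would take care to articulate is that the isomorphism $\holoform_X^p(L) \isom \bigoplus^{\binom np} \holo_X(L)$ is an isomorphism of sheaves, so that it legitimately transfers to an isomorphism of the corresponding cohomology groups; once this is granted, the vanishing range is inherited verbatim from Theorem \ref{thm:main_thm}.
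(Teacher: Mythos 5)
Your argument coincides with the paper's: the paragraph preceding the corollary uses exactly the triviality of the cotangent bundle to get $\holoform_X^p(L) \isom \bigoplus^{\binom np} \holo_X(L)$ and hence $H^q(X,\holoform_X^p(L)) \isom \bigoplus^{\binom np} H^q(X,L)$, after which Theorem \ref{thm:main_thm} finishes the proof. The proposal is correct and essentially identical to the paper's reasoning.
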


Note that the statement is reduced to the original Index theorem when
$X$ is a compact complex torus, in which case $m = n$. Moreover, it
can be shown that $X$ is strongly $(m+1)$-complete
(ref.~\cite{Kazama_pseudoconvex} and \cite{Takeuchi}; convention of
the numbering here following \cite{Demailly}*{pp.~512}; see also \S
\ref{sec:pseudoconvexity}), so Theorem \ref{thm:main_thm} includes a
special case of the result of Andreotti and Grauert, which asserts
that $H^q(X, \sheaf F) = 0$ for all $q \geq m+1$ and for any coherent
analytic sheaf $\sheaf F$ on $X$ (ref.~\cite{Andre&Grauert}). The
remaining part of this article is devoted to proving Theorem
\ref{thm:main_thm}.

\subsection{Methodology}
\label{sec:methodology}

Let $L$ be a holomorphic line bundle on $X$.
Since every Cousin-quasi-torus $X$ has a fibre bundle structure over a
complex torus $T$ with Stein fibres (see \S \ref{sec:fibre-bundle} and
(\ref{eq:fibration})),
it follows from a Leray spectral sequence argument that
\begin{equation*}
  H^q(X,L) \isom H^q(T,p_*\holo_X(L)) \quad\text{for all }q \geq 0 \; ,
\end{equation*}
where $p \colon X \to T$ is the projection in (\ref{eq:fibration}).
Let $\smform{0,q}_T$ (resp.~$\smform{0,q}_X$) be the sheaf of germs of smooth differential
$(0,q)$-forms on $T$ (resp.~on $X$).
The idea is then to show that the Dolbeault complex of sheaves
$\paren{\smform{0,\bullet}_T \otimes_{\holo_T} p_*\holo_X(L), \dbar}$,
is an acyclic resolution of $p_*\holo_X(L)$ on $T$,
and to prove vanishing by solving $\dbar$-equations.
A slightly different formulation is given by Kazama \cite{Kazama} and
Kazama--Umeno \cite{Kazama_Dolbeault-isom}, who consider the resolution
of $\holo_X(L)$ by a subcomplex
$\paren{\kashf{0,\bullet}(L), \dbar}$ of
$\paren{\smform{0,\bullet}_X(L), \dbar}$
(see \S \ref{sec:kazama_resolution} for the definition of $\kashf{0,q}(L)$).
The subcomplex is also an acyclic resolution of $\holo_X(L)$ on $X$,
thus yielding the Kazama--Dolbeault isomorphism
(ref.~\cite{Kazama_Dolbeault-isom}, see also Theorem
\ref{rem:Kazama-Dolbeault}).
This latter formulation is adopted in this article, so, to
prove Theorem \ref{thm:main_thm} is to solve the $\dbar$-equations
$\dbar \xi = \psi$ for any $\psi \in \Gamma(X,\kashf{0,q}(L))$ such
that $\dbar\psi = 0$ and for all $q$'s in the range given in the Theorem.

The required $\dbar$-equations are solved by exhibiting $L^2$
estimates (\ref{eq:a_priori_estimate}) for certain $L$-valued forms on
$X$.
When $L$ is linearizable (see Definition \ref{def:linearizable}),
these estimates can be obtained from Bochner--Kodaira formulas
together with a trick employed by Murakami for the case of tori
(ref.~\cite{Murakami}) (see \S \ref{sec:curv-terms-murakami-trick} and
\S \ref{sec:proof-linearizable}).

For non-linearizable $L$, the required $L^2$ estimates can only be
obtained on compact subsets of $X$ via Takayama's Weak $\ddbar$-Lemma
(ref.~\cite{Takayama}, see also \S \ref{sec:bound-wCurv}).
Then, given $\psi \in \Gamma(X,\kashf{0,q}(L))$ such that $\dbar\psi =
0$ and an exhaustive sequence $\seq{K_\nu}_{\nu \in \Nnum_{>0}}$ of
pseudoconvex relatively compact open subsets of $X$, a sequence
$\seq{\xi_\nu}_{\nu \in \Nnum_{>0}}$ of weak solutions of
$\dbar\xi_\nu = \psi|_{K_\nu}$ is obtained.
Using a Runge-type approximation (see \S \ref{sec:runge-type-approx})
and following an argument in \cite{Grauert&Remmert}*{Ch.~IV, \S 1, Thm.~7},
the solutions $\xi_\nu$'s can be adjusted so that they converge to a
weak global solution of $\dbar\xi = \psi$.
A strong solution in $\Gamma(X,\kashf{0,q-1}(L))$ then exists by the
regularity theory for $\dbar$ or elliptic operators
(ref.~\cite{Hoemander}*{Thm.~4.2.5 and Cor.~4.2.6} or
\cite{Hoemander-PDE}*{Thm.~4.1.5 and Cor.~4.1.2}) and the
Kazama--Dolbeault isomorphism (ref.~\cite{Kazama_Dolbeault-isom}, see
also Theorem \ref{rem:Kazama-Dolbeault}).


\section{Preliminaries}
\label{sec:notations}

\subsection{A $(\fieldC^*)^{n-m}$-principal bundle structure on $X$}
\label{sec:fibre-bundle}

Let $X = \fieldC^n / \Gamma$ be a Cousin-quasi-torus. 
Then one has $\fieldC\Gamma = \fieldC^n$ and $\rk\Gamma = n+m$ with
$m>0$.
Define $K := \pi(\fieldR\Gamma) = \fieldR\Gamma / \Gamma$ and $F :=
\fieldR\Gamma\cap\cplxi\fieldR\Gamma$ as before.
Fix a basis of $\fieldC^n$ such that the period matrix of $X$
is given by
\begin{equation} \label{eq:original-period-matrix}
  \begin{bmatrix}
    I_{n-m} & & A_1 + \cplxi B_1 \\
    & I_m & A_2 + \cplxi B_2
  \end{bmatrix} \; ,
\end{equation}
where an empty entry means a zero entry, $I_r$ denotes the
identity matrix of rank $r$, $A_i$ and $B_i$
denotes real matrices 
such that $A_1$ and $B_1$ are of size $(n-m) \times m$, and $A_2$ and
$B_2$ are square matrices of size $m\times m$.
By re-ordering the basis of
$\fieldC^n$ and respectively the basis of $\Gamma$, $B_2$ can be
assumed to be invertible (since $\rk\Gamma = n+m$). 
Take a change of coordinates given by the matrix
\begin{equation*}
  \begin{bmatrix}
    I_{n-m} & -B_1B_2^{-1} \\
    & B_2^{-1}
  \end{bmatrix} \; ,
\end{equation*}
the period matrix under the new coordinates is then given by
\begin{equation} 
  \label{eq:period-matrix}
  \begin{bmatrix}
    I_{n-m} & \beta_1& \alpha_1 \\
    & \beta_2 & \alpha_2 
  \end{bmatrix} \; ,
\end{equation}
where
\begin{equation*}
  \begin{aligned}
    & \beta_1 = -B_1 B_2^{-1} \; , && \alpha_1 = A_1 - B_1 B_2^{-1}A_2 \; , \\
    & \beta_2 = B_2^{-1} \; , && \alpha_2 = B_2^{-1} A_2 + \cplxi I_m
    \; ,
  \end{aligned}
\end{equation*}
which are all real matrices except for $\alpha_2$.
Let the new coordinates of $\fieldC^n$
be denoted by $(u,v) := (u^1, \dots, u^{n-m}, v^1, \dots, v^m)$,
\showsym[uv]{$u^i$ (resp.~$v^j$)}{apt coordinates on fibres,
  $i=1,\dots,n-m$ \\ (resp.~on the base, $j=1,\dots,m$)}
or simply by $z := (z^1,\dots, z^n)$\showsym[uv1]{$z = (u,v)$}{}.
This new coordinate system is called an \emph{apt coordinate system
  (with respect to $\Gamma$)} (see \cite{Cap&Cat}*{Def.~2.3}; also
called an \emph{toroidal coordinate system}, see \cite{Abe&Kopfermann}*{\S
  1.1.12}), which is characterized by the properties
\begin{enumerate}
\item \label{item:eq-of-F} $F = \set{(u,v)\in \fieldC^n \: : \: u = 0}$;
\item each coordinate of the imaginary part $\Im u$ of $u$ is
  a global function on $X$ and $K = \set{(u,v)\!\!\!\mod\Gamma \in X \: :
    \: \Im u = 0}$;
\item \label{item:def:H-apt-coord_fibre-std-basis-in-lattice} the
  standard basic vectors $e_1,\dots,e_{n-m}$ in $\fieldC^n$
  can be completed to a basis of $\Gamma$.
\end{enumerate}

The choice of an apt coordinate system fixes a decomposition
$\fieldC^n = E \oplus F$\showsym[EtF]{$E \oplus F$}{decomposition of
  $\fieldC^n$ induced from the chosen apt coordinate system}, where
$E$\label{page:def-E} is the complex vector subspace
of $\fieldC^n$ spanned by $e_1, \dots, e_{n-m}$ with $u$ as the
coordinate vector.
Set $\Gamma' := \Gamma \cap E = \Znum\genby{e_1,\dots,e_{n-m}} =
\Znum^{n-m}$.
Let $\pT \colon \fieldC^n \to F$ be the projection $(u,v) \mapsto v$.
It can be seen from 
(\ref{eq:period-matrix}) that $\pT(\Gamma)$ is a lattice in $F$,
i.e.~a discrete subgroup of $F$ of rank $2m$.
Let $T^m := F / \pT(\Gamma)$\showsym[T]{$T^m$}{the base complex
  torus of the fibration (\ref{eq:fibration})}, which is a complex
torus of dimension $m$.
Then $\pT$ induces a holomorphic epimorphism $p \colon X \to T^m$
with kernel $E / \Gamma' \isom (\fieldC^*)^{n-m}$.
Therefore,
$X$ has a $(\fieldC^*)^{n-m}$-principal bundle structure given by the
exact sequence of groups
\begin{equation}\label{eq:fibration}
  \xymatrix{0 \ar[r] & (\fieldC^*)^{n-m} \ar[r]^-\iota & X \ar[r]^-p & T^m
    \ar[r] & 0} 
\end{equation}
(ref.~\cite{Steenrod}*{\S 7.4} and \cite{Hirzebruch}*{Thm.~3.4.3}).
In local coordinates, $\iota$ is given by $u \!\!\mod \Gamma' \mapsto
(u,0) \!\!\mod \Gamma$ and $p$ by $(u,v) \!\!\mod \Gamma
\mapsto v \!\!\mod \pT(\Gamma)$.
In view of the fibre bundle structure, the tangential directions
with respect to the $u$-coordinates are called the \emph{fibre
  directions}, while those of the $v$-coordinates are called the
\emph{base directions}.
These terminologies are used throughout this article to simplify
description. 

Since the cotangent bundle 
of $X$ is trivial, the decomposition $\fieldC^n = E \oplus F$ induces a
decomposition of the holomorphic cotangent bundle $\cTgt{1,0} := \cTgt{1,0}_X$ of
$X$ with respect to the fibre and base directions, i.e.
\begin{equation} \label{eq:cotangent-split}
  \cTgt{1,0} = \cTgtf{1,0} \oplus \cTgtb{1,0} \; ,
\end{equation}%
\showsym[tcoTgtf]{$\cTgtf{1,0}$ (resp.~$\cTgtb{1,0}$)}{holomorphic cotangent bundle along
  the fibre \\ (resp.~base) directions}%
where $\cTgtf{1,0}$ and $\cTgtb{1,0}$ are holomorphic subbundles
generated at every point of $X$ respectively by $d u^i$ for $i = 1,\dots
n-m$ and  $d v^j$ for $j = 1,\dots, m$. 
For later use, define as usual
$\cTgtb{p,q} := \bigwedge^p \cTgtb{1,0} \wedge \bigwedge^q
\conj{\cTgtb{1,0}}$ for any integers $p,q \geq 0$, where $\cTgtb{0,0}
= \bigwedge^0 \cTgtb{1,0} = \bigwedge^0 \conj{\cTgtb{1,0}}$ denotes
the trivial line bundle on $X$.
Define $\cTgtf{p,q}$ similarly with $\cTgtf{}$ in place of $\cTgtb{}$.

\subsection{An exhaustive family of pseudoconvex subsets}
\label{sec:pseudoconvexity}

Every Cousin-quasi-torus is pseudoconvex and strongly $(m+1)$-complete
(cf. \cite{Kazama_pseudoconvex} and \cite{Takeuchi}; convention of
the numbering here following \cite{Demailly}*{pp.~512}).
Indeed, define $\varphi(z) := \varphi(\Im u) :=
\norm{\Im u}^2$\showsym[phi]{$\varphi$}{$:= \norm{\Im u}^2$ (Euclidean
  $2$-norm)} ($\norm\cdot$ is the Euclidean $2$-norm here). 
Then $\varphi$ is an exhaustion function on $X$ whose Levi form is given by
\begin{equation*}
  \cplxi\diff\dbar\varphi = \frac{\cplxi}{2} \sum_{i = 1}^{n-m} du^i\wedge d\conj{u^i} \; ,
\end{equation*}
which is semi-positive definite with exactly $n-m$ positive
eigenvalues everywhere on $X$.
Therefore, $X$ is pseudoconvex and strongly $(m+1)$-complete.

For any $c > 0$, set $K_c := \set{z\in X \: : \: \varphi(z) <
  c}$\showsym[Kc]{$K_c$}{$:= \set{z\in X \: : \: \varphi(z) < c}$ for
  $0 < c < \infty$}.
Then $\seq{K_c}_{c > 0}$ forms an exhaustive family of open relatively compact subsets
of $X$. 
Set also $K_\infty := X$\showsym[Kinf]{$K_\infty$ (resp.~$K_0$)}{$:=
  X$ (resp.~$K$)}, and $K_0 := K$, the maximal compact subgroup of
$X$.
For every $c > 0$, $K_c$ is of course itself pseudoconvex. 

\subsection{Kazama sheaves and Kazama--Dolbeault isomorphism}
\label{sec:kazama_resolution}

Let $\smform{} := \smform{}_X$\showsym[A1]{$\smform{}$}{sheaf of germs of smooth ($C^\infty$)
  functions on $X$} be the sheaf of germs of smooth functions on $X$.
Fix a choice of an apt coordinate system.
Let $V$ be any holomorphic vector bundle on $X$.
Define on $X$ the \emph{Kazama sheaves} as in \cite{Kazama_Dolbeault-isom} to be
\begin{gather*} 
  \kashf{} := \set{ f \in \smform{}  \: : \:  \dfrac{\diff
      f}{\diff\conj{u^i}} \equiv 0 \;\textrm{ for } 1\leq i\leq n-m} \quad
  \text{and} \\
  \kashf{0,q} := \kashf{} \otimes_{p^{-1}\smform{}_{T^m}}
  p^{-1}\smform{0,q}_{T^m}
  \; , \quad
  \kashf{0,q}(V) := \kashf{0,q}\otimes_{\holo_X} \holo_X(V)
  \quad\text{for $1\leq q \leq m$,}
\end{gather*}%
\showsym[Hwm]{$\kashf{}$}{Kazama sheaf of smooth ($C^\infty$)
  germs of functions \\ holomorphic along the fibre directions}%
\showsym[Hwmq]{$\kashf{0,q}$}{}%
where $p$ is the projection given in (\ref{eq:fibration}) and
$\smform{0,q}_{T^m}$ is the sheaf of germs of $(0,q)$-forms on the base
torus $T^m$.
In words, Kazama sheaf $\kashf{}$ consists of germs of sections of
$\smform{}$ which are \emph{holomorphic in the fibre directions}, and
$\kashf{0,q}$ consists of $\kashf{}$-valued \emph{$(0,q)$-forms in the
  base directions}.
Note that the definitions of the sheaves depend on the choice of the
decomposition (\ref{eq:cotangent-split}).
Set also $\kashf{0,0}(V) := \kashf{}(V)$. 
  For notational convenience, the space of sections $\Gamma(U,\kashf{0,q}(V))$ over any subset
  $U$ of $X$ is also denoted by
  $\kashf{0,q}(U;V)$\showsym[HwUL]{$\kashf{0,q}(U;V)$}{$:=
    \Gamma(U,\kashf{0,q}(V))$}, and similarly for spaces of
  sections of other sheaves.
The following \emph{Kazama--Dolbeault isomorphism} is proven in
\cite{Ka&U_Dolb-isom_earlier} and \cite{Kazama_Dolbeault-isom} (see
also \cite{Kazama}).
\begin{thm} \label{rem:Kazama-Dolbeault}
  The complex
  \begin{equation}\label{eq:kazama_resolution_L}
    \xymatrix@C-5pt{0 \ar[r] & \holo_X(V) \ar[r] & \kashf{0,0}(V) \ar[r]^-\dbar &
      \kashf{0,1}(V) \ar[r]^-\dbar &  \dots \ar[r]^-\dbar & \kashf{0,m}(V)
      \ar[r] & 0}
  \end{equation}
  is an acyclic resolution of $\holo_X(V)$ over $X$, i.e.~$H^p(X,
  \kashf{0,q}(V)) = 0$ for any $p \geq 1$ and $0\leq q \leq m$.
  Consequently, the natural injection of complexes
  \begin{equation*}
    \xymatrix@C-5pt{
      {\paren{\kashf{0,\bullet}(X;V), \dbar}} \ar@{^(->}[r] &
      {\paren{\smform{0,\bullet}(X;V), \dbar}}} 
  \end{equation*}
  induces the isomorphisms
  \begin{equation*}
    H^q_\dbar (\kashf{0,\bullet}(X;V)) \isom
    H^q_\dbar (\smform{0,\bullet}(X;V)) \isom H^q(X, V)
  \end{equation*}
  for all $q \geq 0$.
\end{thm}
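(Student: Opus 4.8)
The plan is to prove the two assertions separately — that (\ref{eq:kazama_resolution_L}) is a resolution of $\holo_X(V)$, and that each $\kashf{0,q}(V)$ is acyclic — and then to deduce the displayed isomorphisms by abstract de~Rham. I would begin with the resolution. On the subsheaves $\kashf{0,\bullet}(V)$ the operator $\dbar$ restricts to $\dbarb$: for $f\in\kashf{}$ one has $\dbar f=\dbarb f$ since $\dbarf f=0$, and $\dbarf\dbarb f=-\dbarb\dbarf f=0$, so $\dbar$ carries $\kashf{0,q}(V)$ into $\kashf{0,q+1}(V)$ and squares to zero. Exactness at $\kashf{0,0}(V)=\kashf{}(V)$ is immediate, a germ holomorphic along the fibres and killed by $\dbarb$ being holomorphic in every direction, hence in $\holo_X(V)$. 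Exactness at $\kashf{0,q}(V)$ for $q\geq1$ is local: on a chart $\Delta\times(\fieldC^*)^{n-m}$ one solves $\dbarb\xi=\psi$ for $\dbarb$-closed $\psi$, keeping $\xi$ holomorphic in the fibre coordinates $u$. This is the Dolbeault--Grothendieck lemma carried out in the base coordinates $v$ with $u$ entering as holomorphic parameters; the standard proof, integrating one $\conj v$-variable at a time, preserves holomorphic dependence on $u$ and applies verbatim.

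The acyclicity $H^p\paren{X,\kashf{0,q}(V)}=0$ for $p\geq1$ is the heart of the matter, and I would extract it from the Stein fibration $p\colon X\to T^m$ of (\ref{eq:fibration}) through the Leray spectral sequence $E_2^{i,j}=H^i\paren{T^m,R^jp_*\kashf{0,q}(V)}\Rightarrow H^{i+j}\paren{X,\kashf{0,q}(V)}$. For the base terms, each $\kashf{0,q}(V)$ is a sheaf of $p^{-1}\smform{}_{T^m}$-modules, since multiplication by a pulled-back function $p^*\rho$ preserves holomorphy along the fibres ($p^*\rho$ being constant there); hence $p_*\kashf{0,q}(V)$ is a sheaf of $\smform{}_{T^m}$-modules on $T^m$, therefore fine, and $H^i\paren{T^m,p_*\kashf{0,q}(V)}=0$ for $i\geq1$. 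For the higher direct images I would show $R^jp_*\kashf{0,q}(V)=0$ for $j\geq1$. Over a polydisc $U\subset T^m$ the base form-part is $p^{-1}\smform{}_{T^m}$-free, reducing this to $\kashf{}(V)$, for which I would use the fibre-direction resolution by fine sheaves
\[
  0\to\kashf{}(V)\to\smform{0,(0,0)}_X(V)\xrightarrow{\dbarf}\smform{0,(1,0)}_X(V)\xrightarrow{\dbarf}\cdots\xrightarrow{\dbarf}\smform{0,(n-m,0)}_X(V)\to0,
\]
whose exactness is the fibre-direction $\dbar$-Poincar\'e lemma. The stalk of $R^jp_*\kashf{}(V)$ at $t\in T^m$ is then computed by the $\dbarf$-cohomology of $\smform{0,(\bullet,0)}_X(V)$ over shrinking tubes $p^{-1}(U)$, that is, by the Dolbeault cohomology of the fibre $(\fieldC^*)^{n-m}$ with values in $V$ and with smooth dependence on the parameter $v\in U$; since the fibre is Stein, Cartan's Theorem~B makes this vanish for $j\geq1$. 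Feeding both computations into Leray gives $H^p\paren{X,\kashf{0,q}(V)}\isom H^p\paren{T^m,p_*\kashf{0,q}(V)}=0$ for all $p\geq1$.

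With acyclicity in hand, both $\paren{\kashf{0,\bullet}(V),\dbar}$ and the full Dolbeault complex $\paren{\smform{0,\bullet}_X(V),\dbar}$ are resolutions of $\holo_X(V)$ by acyclic sheaves, and the natural inclusion is a morphism of such resolutions lifting $\mathrm{id}_{\holo_X(V)}$. The abstract de~Rham theorem identifies the cohomology of the complex of global sections of each with $H^\bullet(X,\holo_X(V))=H^\bullet(X,V)$ and shows that the inclusion induces the asserted isomorphisms $H^q_\dbar\paren{\kashf{0,\bullet}(X;V)}\isom H^q_\dbar\paren{\smform{0,\bullet}(X;V)}\isom H^q(X,V)$.

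The hardest step will be the vanishing $R^jp_*\kashf{}(V)=0$: it is precisely here that the Stein-ness of the fibres $(\fieldC^*)^{n-m}$ is indispensable, and the technical care lies in solving the fibre $\dbar$-equation with solutions that are \emph{jointly} smooth in the fibre and base variables — that is, with smooth dependence on the base parameters — rather than merely solvable fibre by fibre.
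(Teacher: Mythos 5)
First, note that the paper does not prove Theorem \ref{rem:Kazama-Dolbeault} at all: it is quoted from Kazama and Kazama--Umeno (\cite{Ka&U_Dolb-isom_earlier}, \cite{Kazama_Dolbeault-isom}), so there is no in-text argument to compare against. Measured against what those references actually do, your architecture is the right one and essentially reconstructs their strategy: $\dbar$ restricts to $\dbarb$ on the subcomplex, local exactness is the Dolbeault--Grothendieck lemma in the $v$-variables with $u$ as holomorphic parameters, acyclicity is extracted from the Stein fibration (\ref{eq:fibration}) via Leray, with $p_*\kashf{0,q}(V)$ acyclic because it is a module over the fine sheaf $\smform{}_{T^m}$, and the final isomorphisms follow from abstract de~Rham. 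All of that is sound.

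There is, however, one genuine gap, and it sits exactly where you locate "the heart of the matter". The vanishing $R^jp_*\kashf{}(V)=0$ for $j\geq 1$ requires solving $\difff$... rather, $\dbarf\xi=\psi$ \emph{globally on the whole fibre}, i.e.\ on $p^{-1}(U)\cong U\times(\fieldC^*)^{n-m}$, with $\xi$ jointly smooth and with smooth dependence on $v\in U$; one may shrink $U$ in computing the stalk of $R^jp_*$, but one cannot shrink the fibre. Cartan's Theorem~B, as you invoke it, gives vanishing of coherent cohomology on the Stein fibre (equivalently, solvability of the fibrewise $\dbar$-equation for data holomorphic or merely smooth \emph{on a fixed fibre}); it says nothing about the existence of a solution operator that preserves smooth dependence on the base parameter, and a fibre-by-fibre choice of solutions need not assemble into anything measurable, let alone smooth, in $v$. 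This parametrized, global-in-fibre $\dbar$-Poincar\'e lemma is precisely the analytic content of Kazama's papers: it is proved there by expanding forms in Fourier--Laurent series along the fibre directions (using the compact real torus inside $(\fieldC^*)^{n-m}$), solving coefficientwise, and establishing convergence of the resulting formal solution with estimates uniform in $v$ --- and it is also the point where the toroidal (Cousin) hypothesis enters through Diophantine-type control of the denominators. You flag the difficulty in your closing sentence, but the proof as written asserts the step rather than supplying it, so the argument is incomplete at its decisive stage. Everything else in the proposal (the reduction of $\kashf{0,q}(V)$ to $\kashf{}(V)$ over a trivializing polydisc, the fineness and softness arguments, and the de~Rham comparison of the two acyclic resolutions) is correct and would go through once that lemma is in hand.
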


In view of the Kazama--Dolbeault isomorphism,
to show the vanishing of $H^q(X,V)$ it
suffices to show that for any $\dbar$-closed $\psi \in
\kashf{0,q}(X;V)$ there exists $\xi \in \smform{0,q-1}(X;V)$ such that
\begin{equation}\label{eq:dbar_eq}
  \dbar \xi = \psi \; .
\end{equation}
In fact, (\ref{eq:dbar_eq}) means that the class $\psi
\!\!\mod{\dbar\smform{0,q-1}(X;V)}$ is the zero class in
$H^q_\dbar(\smform{0,\bullet}(X;V))$, so, by the isomorphism, the class
$\psi \!\!\mod{\dbar\kashf{0,q-1}(X;V)}$ is also the zero class
in $H^q_\dbar(\kashf{0,\bullet}(X;V))$. 
Therefore, $\xi$ in (\ref{eq:dbar_eq}) can be chosen in $\kashf{0,q-1}(X;V)$. 

\subsection{Holomorphic line bundles on $X$}


Every holomorphic line bundle $L$\showsym[L]{$L$}{a holomorphic line bundle on $X$} on $X$ can
be defined by a system of factors of
automorphy, which can be taken into a normal form analogous to the
Appell--Humbert normal form, given by
(ref.~\cite{Cap&Cat}*{Remark 1.11 and \S 2.2} and \cite{Vogt}*{\S 2})
\begin{equation} \label{eq:factor-of-automorphy-normal-form}
  \varrho(\gamma)e^{\pi \Hform(z,\gamma) + \frac{\pi}{2} \Hform(\gamma,\gamma)+
    f_\gamma(z)} \qquad \forall \: \gamma\in\Gamma \; ,
\end{equation}%
\showsym[H]{$\Hform$}{a hermitian form on $\fieldC^n \times \fieldC^n$ associated to $L$}%
\showsym[fgam]{$f_\gamma$}{a non-linear exponent in the normal form of
  the factors \\ of automorphy defining $L_\wildsub$}%
where
\begin{itemize}
\item $\Hform$ is a hermitian form on $\fieldC^n \times \fieldC^n$, whose imaginary part
  $\Im \Hform$ takes integral values on $\Gamma\times\Gamma$ and
  corresponds to the first Chern class $c_1(L)$ of $L$;

\item $\varrho$ is a semi-character for $\Im \Hform$ on $\Gamma$,
  i.e.
  \begin{equation*}
    \varrho(\gamma+\gamma') =
    \varrho(\gamma)\varrho(\gamma')e^{\pi\cplxi\Im
      \Hform(\gamma,\gamma')} 
  \end{equation*}
  for all $\gamma, \gamma' \in \Gamma$, and $\abs{\varrho(\gamma)} =
  1$ for all $\gamma \in \Gamma$; and

\item $\set{f_\gamma}_{\gamma\in\Gamma}$ is an additive 1-cocycle
  with values in $\holo_{\fieldC^n}(\fieldC^n)$, i.e.~$f_\gamma \in
  \holo_{\fieldC^n}(\fieldC^n)$ for all $\gamma \in \Gamma$ and
  \begin{equation*}
    f_{\gamma+\gamma'}(z) = f_{\gamma'}(z+\gamma) + f_\gamma(z)
  \end{equation*}
  for all $\gamma, \gamma' \in \Gamma$.
\end{itemize}
According to \cite{Vogt}*{Prop.~8}, under a fixed apt coordinate
system, $f_\gamma(z)$ can be taken to be independent of the variable
$v$ for every $\gamma\in\Gamma$.
Denote by $\gamma_u$ the image of $\gamma\in\Gamma$ under the
projection $\fieldC^n \ni (u,v) \mapsto u \in E$ (see page
\pageref{page:def-E} for the definition of $E$).
Also according to \cite{Vogt}*{Prop.~8} (cf.~also
\cite{Cap&Cat}*{\S 1.2}), for any  
$u\in E$, one has
\begin{equation} \label{eq:f-gamma-properties}
  \begin{cases}
    f_{\gamma'}(u) = 0 \\
    f_{\gamma}(u + \gamma'_u) = f_{\gamma}(u)
  \end{cases}
  \quad\text{for all }\gamma'\in \Gamma' \text{ and }\gamma\in\Gamma \; ,
\end{equation}
where $\Gamma' := \Gamma \cap E = \Znum\genby{e_1,\dots,e_{n-m}}$ as
in \S \ref{sec:fibre-bundle}. 

It is apparent that $L$ can be decomposed into $L_\tamesub \otimes
L_\wildsub$\showsym[L2]{$L_\tamesub$ (resp.~$L_\wildsub$)}{tame
  (resp.~wild) part of the line bundle $L$}, where $L_\tamesub$ is
defined by the linear part
\begin{equation*}
  \varrho(\gamma)e^{\pi \Hform(z,\gamma) + \frac{\pi}{2}\Hform(\gamma,\gamma)}
\end{equation*}
of the factor of automorphy in
(\ref{eq:factor-of-automorphy-normal-form}), while $L_\wildsub$ is
defined by the non-linear part
\begin{equation*}
    e^{f_\gamma(z)} \; .
\end{equation*}
Call $L_\tamesub$ and $L_\wildsub$ the \emph{tame part} and \emph{wild part} of $L$
respectively. 
\begin{definition} \label{def:linearizable}
  $L$ is said to be \emph{linearizable} if $L_\wildsub$ is trivial,
  i.e.~there exists a holomorphic function $g$ on $\fieldC^n$ such
  that $g(z + \gamma) - g(z) = f_\gamma(z)$ for all $\gamma \in
  \Gamma$ and $z \in \fieldC^n$. $L$ is said to be
  \emph{non-linearizable} otherwise.
\end{definition}






$\Im \Hform$ is uniquely determined only on
$\fieldR\Gamma\times\fieldR\Gamma$ (see \cite{Cap&Cat}*{Remark 1.11}
and also \cite{Andre&Ghera}). 
Then one has the following proposition.
\begin{prop} \label{prop:H_bot_R-arbitrary}
  Let $\Hform$ be a hermitian form associated to $L$. 
  Suppose
  in a chosen apt coordinate system the matrix associated
  to $\Hform|_{E\times E}$ is given by $\Hbot$. 
  Then, $\Re \Hbot$ can be chosen arbitrarily by multiplying the
  cocycle defining $L$ by a suitable coboundary.
\end{prop}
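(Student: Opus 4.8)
The plan is to produce the required change of $\Re\Hbot$ by multiplying the normal form (\ref{eq:factor-of-automorphy-normal-form}) by a single explicit coboundary, built from a real symmetric bilinear form on $E$. The structural fact that makes this work is a defining property of an apt coordinate system, namely that each coordinate of $\Im u$ is a global function on $X$: this forces every lattice vector $\gamma\in\Gamma$ to satisfy $\Im\gamma_u = 0$, so its $E$-component $\gamma_u$ is a \emph{real} vector lying in the real span of $e_1,\dots,e_{n-m}$. This reality is the hinge of the whole argument, since on real arguments a $\fieldC$-bilinear symmetric form and a hermitian form cannot be told apart; hence a genuine holomorphic coboundary will be seen to implement a hermitian modification of $\Hform$.

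Concretely, given a prescribed real symmetric $(n-m)\times(n-m)$ matrix $M$, I would put $D := M - \Re\Hbot$ and use it in two guises sharing the same real matrix: the $\fieldC$-bilinear symmetric form $S(u,u') := \sum_{i,j} D_{ij}\,u^i u'^j$ on $E$, and the hermitian form $\Delta(z,w) := \sum_{i,j} D_{ij}\,z^i\conj{w^j}$ on $\fieldC^n$ (supported on $E$, vanishing on $F$ and on the mixed blocks). Set $g(z) := \frac{\pi}{2}\,S(u,u)$, where $u$ is the $E$-component of $z=(u,v)$; since $S$ is a quadratic polynomial in $u$, the function $g$ is entire, so $b_\gamma(z) := e^{g(z+\gamma)-g(z)}$ is by definition a coboundary and does not alter the isomorphism class of $L$. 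The key computation is the expansion $g(z+\gamma)-g(z) = \pi\,S(u,\gamma_u) + \frac{\pi}{2}\,S(\gamma_u,\gamma_u)$: because $\gamma_u$ is real one has $\conj{\gamma_u^j}=\gamma_u^j$, whence $S(u,\gamma_u)=\Delta(z,\gamma)$ and $S(\gamma_u,\gamma_u)=\Delta(\gamma,\gamma)$. Thus $b_\gamma(z)=e^{\pi\Delta(z,\gamma)+\frac{\pi}{2}\Delta(\gamma,\gamma)}$, and multiplying (\ref{eq:factor-of-automorphy-normal-form}) by $b_\gamma$ returns a cocycle in exactly the same normal form, with $\varrho$ and $f_\gamma$ untouched and the hermitian form replaced by $\Hform' := \Hform + \Delta$. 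Here $\varrho$ remains a semi-character for $\Im\Hform'$, since $\Im\Hform'=\Im\Hform$ on $\Gamma\times\Gamma$, so $b_\gamma\,\cdot$ (\ref{eq:factor-of-automorphy-normal-form}) is again genuinely of the shape (\ref{eq:factor-of-automorphy-normal-form}).

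It then remains only to read off the invariants. As $\Delta$ is supported on $E$ with a real matrix there, $\Hform'|_{F\times F} = \Hform|_{F\times F}$, and writing $\fieldR\Gamma$ as the direct sum of the real span of $e_1,\dots,e_{n-m}$ with $F$, the form $\Delta$ is real-valued on the first summand and vanishes on $F$, so $\Im\Hform' = \Im\Hform$ on $\fieldR\Gamma\times\fieldR\Gamma$; hence $\Hform'$ is again a hermitian form associated to $L$ in the required sense. Meanwhile $\Re\bigl(\Hform'|_{E\times E}\bigr) = \Re\Hbot + D = M$, which is exactly the prescribed matrix. The only delicate point — and the sole place where the apt (equivalently, Cousin) structure is essential — is the reality of $\gamma_u$ invoked in the key computation; granting that, the statement follows from the direct verification just outlined.
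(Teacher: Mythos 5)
Your proof is correct and follows essentially the same route as the paper's: both hinge on the reality of $\gamma_u$ (visible from the period matrix in an apt coordinate system, or equivalently from $\Im u$ descending to $X$) to identify the coboundary $e^{\frac{\pi}{2}S(u+\gamma_u,u+\gamma_u)-\frac{\pi}{2}S(u,u)}$ with the hermitian modification $e^{\pi\Delta(z,\gamma)+\frac{\pi}{2}\Delta(\gamma,\gamma)}$, thereby shifting $\Re\Hbot$ by an arbitrary real symmetric matrix. Your additional verification that $\Im\Hform'=\Im\Hform$ on $\fieldR\Gamma\times\fieldR\Gamma$ and that $\varrho$ remains a semi-character is a welcome (if routine) elaboration of what the paper leaves implicit.
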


{
\newcommand{\Bform}{\mathcal B}

\begin{proof}
  Fix $\Hform$ and an apt coordinate system.
  Let $\Bform(u,u)$ be any symmetric $\fieldC$-bilinear form with
  \emph{real} coefficients on $E \times E$ and denote the
  corresponding $(n-m) \times (n-m)$-matrix under the chosen apt
  coordinates by $B$.
  Note that $\gamma_u$
  is a real vector by the choice of coordinates (see
  (\ref{eq:period-matrix})).
  Then multiplying $e^{\frac{\pi}{2} \Bform(u + \gamma_u,u +
    \gamma_u)-\frac{\pi}{2} \Bform(u,u)}$ (which is a component of a
  1-coboundary) to (\ref{eq:factor-of-automorphy-normal-form}) gives
  rise to a system of factors of automorphy defining a line bundle
  isomorphic to $L$. 
  The new system of factors of automorphy is of the
  same form as in (\ref{eq:factor-of-automorphy-normal-form}) with
  $\Hform$ replaced by $\Hform'$, where $\Hform'$ is a hermitian form such that
  $\Hform'(z,\gamma) = \Hform(z,\gamma) + \Bform(u,\gamma_u)$ (note that such
  hermitian $\Hform'$ exists since all $\gamma_u$'s as well as $B$ are real).
  Then $\Re \Hbot' = \Re \Hbot + B$, while the other entries of the
  matrix of $\Im \Hform'$ are the same as the respective entries of $\Im
  \Hform$. Therefore, since $B$ is arbitrary, $\Re \Hbot$ can be chosen
  arbitrarily.
\end{proof}

}

This shows that one cannot, in general, replace $s_F^+$ and $s_F^-$ in
Theorem \ref{thm:main_thm} by $s^+$ and $s^-$, the numbers of positive
and negative eigenvalues of $\Hform$ (instead of $\Hform|_{F\times F}$)
respectively. In fact, if $L$ is the trivial line bundle, $\Hform$ can be chosen
such that $\Re \Hbot$ is negative definite and the other entries of
the matrix associated to $\Im \Hform$ are zero. Such $\Hform$ has at least $1$
negative eigenvalue. However, $\dim H^0(X,\holo_X)$ cannot be $0$
since there exist constant functions on
$X$ (which is true even for any complex manifold). In fact, Kazama
has shown in \cite{Kazama}*{Thm.~4.3} that $H^q(X,\holo_X)$ are
non-trivial for all $1\leq q\leq m$ for any Cousin-quasi-torus $X$.

\subsection{A hermitian metric on $L$}
\label{sec:metric-on-L}

Given a holomorphic line bundle $L$,
hermitian metrics $\etat$ on the tame part $L_\tamesub$ and $\etaw$ on
the wild part $L_\wildsub$ of $L$ are defined below.
The product $\eta := \etat \etaw$ then defines a hermitian metric on $L$.

Define a hermitian metric on $L_\tamesub$ by $\etat(z) :=
e^{-\pi \Hform(z,z)}$\showsym[etat]{$\etat$}{$:= e^{-\pi \Hform(z,z)}$, a
  hermitian metric on $L_\tamesub$} as in the compact case. 
Then the corresponding curvature form on $X$, called
the 
\emph{tame part of the curvature form of $L$},
is given by
\begin{equation*}
  \begin{split}
    \Theta_\tCurv &:= -\cplxi\diff\dbar\log\etat = \pi\cplxi\diff\dbar \Hform(z,z)
    \; .
  \end{split}
\end{equation*}
%


Next is to define a hermitian metric $\etaw$ on $L_\wildsub$.
An apt coordinate system is fixed in what follows.
First notice the following
\begin{prop} \label{prop:existence-of-hbar}
  There exists a smooth function $\hbar$ on $\fieldC^n$ which
  is holomorphic along the fibre directions under the chosen apt
  coordinate system and satisfies 
  \begin{equation} \label{eq:f-gamma-coboundary}
    \hbar(z+\gamma) - \hbar(z) = f_\gamma(u) \quad\text{ for all }
    \gamma\in\Gamma \; .
  \end{equation}
\end{prop}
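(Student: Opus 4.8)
The plan is to produce $\hbar$ by an explicit partition-of-unity average over the base lattice. The essential point is that the averaging weights can be chosen to depend only on the base coordinate $v$, so that holomorphy along the fibre directions survives the averaging; this is exactly the feature that separates the present statement (valid for all $L$) from the linearizable case (where a \emph{genuinely} holomorphic primitive $g$ exists).

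First I would record how the cocycle descends. By \cite{Vogt}*{Prop.~8} each $f_\gamma$ depends only on $u$, and (\ref{eq:f-gamma-properties}) gives $f_{\gamma'}\equiv 0$ for $\gamma'\in\Gamma'$ together with the $\Gamma'$-periodicity $f_\gamma(u+\gamma'_u)=f_\gamma(u)$. Feeding these into the cocycle relation $f_{\gamma+\gamma'}(z)=f_{\gamma'}(z+\gamma)+f_\gamma(z)$ shows that $f_\gamma$ depends on $\gamma$ only through its class in $\Gamma/\Gamma'\isom\pT(\Gamma)$. Thus, writing $f_\mu:=f_\gamma$ for any lift $\gamma\in\Gamma$ of $\mu\in\pT(\Gamma)$ (well defined by the above), the family $\set{f_\mu}$ is a $1$-cocycle for the base lattice $\pT(\Gamma)$ with values in functions holomorphic on the fibre $E/\Gamma'\isom(\fieldC^*)^{n-m}$; since this fibrewise cocycle is trivial, the whole obstruction sits in the base directions, and (\ref{eq:f-gamma-coboundary}) becomes the problem of trivialising $\set{f_\mu}$ within $\kashf{}$.

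Next I would build the averaging operator. Since $\pT(\Gamma)$ is a full lattice in $F$ and $F/\pT(\Gamma)=T^m$ is compact, there is a smooth compactly supported $\rho\colon F\to\fieldR_{\geq 0}$ with $\sum_{\mu\in\pT(\Gamma)}\rho(v+\mu)=1$ (normalise any bump that is positive on a fundamental domain by the $\pT(\Gamma)$-periodic positive function $\sum_\mu\rho(\,\cdot+\mu)$). I then set
\[
  \hbar(z):=-\sum_{\mu\in\pT(\Gamma)}\rho(v+\mu)\,f_\mu(z).
\]
Because $\rho$ is compactly supported the sum is locally finite, so $\hbar$ is smooth; because each weight $\rho(v+\mu)$ depends only on $v$ while each $f_\mu$ is holomorphic in $u$, the sum lies in $\kashf{}$, i.e.\ $\hbar$ is holomorphic along the fibre directions. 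To verify (\ref{eq:f-gamma-coboundary}) I would use the transformation rule $f_\mu(z+\gamma)=f_{\lambda+\mu}(z)-f_\lambda(z)$, with $\lambda:=\pT(\gamma)$, which follows from the cocycle relation together with the $\Gamma'$-invariance above (so that it is valid for \emph{every} $\gamma\in\Gamma$, not merely a transversal). Shifting $v\mapsto v+\lambda$ in the weights and reindexing $\mu\mapsto\mu+\lambda$ then collapses $\hbar(z+\gamma)-\hbar(z)$ to $f_\lambda(z)=f_\gamma(z)$, the identity $\sum_\mu\rho(v+\mu)=1$ accounting for the one surviving term.

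The hard part is conceptual rather than computational: one must secure smoothness, fibre-holomorphy, \emph{and} the coboundary identity simultaneously, and this is possible only because the fibrewise restriction of the cocycle vanishes (so no holomorphic primitive is demanded in the $u$-directions) while the remaining base obstruction can be killed smoothly by a $v$-only partition of unity. I expect the only real care to be in checking that the descent to $\pT(\Gamma)$ and the transformation rule for $f_\mu$ are independent of all choices of lifts, which is exactly what the vanishing $f_{\gamma'}\equiv 0$ and the $\Gamma'$-periodicity in (\ref{eq:f-gamma-properties}) guarantee.
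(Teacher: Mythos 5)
Your proposal is correct and follows essentially the same route as the paper: a $v$-only partition of unity over the base lattice, applied to the cocycle and verified by the same reindexing computation (the paper sums over a complement $\Gamma''$ of $\Gamma'$ in $\Gamma$ rather than descending $f_\gamma$ to $\Gamma/\Gamma'\isom\pT(\Gamma)$, but this is the identical construction). The descent observation you make explicitly is exactly what the paper uses implicitly via (\ref{eq:f-gamma-properties}) in the final step of its computation.
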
%
\showsym[h]{$\hbar$}{a smooth function on $\fieldC^n$ whose coboundary
  is $\set{f_\gamma}_{\gamma \in \Gamma}$; \\ see Proposition
  \ref{prop:existence-of-hbar}}%

\begin{proof}

  This follows from the fact that $H^1(X,\kashf{}) = 0$
  (ref.~\cite{Kazama_Dolbeault-isom}).
  A direct proof is given as follows.

  Let $\Gamma''$ be the subgroup of
  $\Gamma$ generated by the last $2m$ column vectors of the period
  matrix (\ref{eq:period-matrix}) of $\Gamma$.
  Then $\Gamma = \Gamma' \oplus \Gamma''$ ($\Gamma'$ defined as in \S
  \ref{sec:fibre-bundle}).
  Write $\gamma_v := \pT(\gamma)$ for all $\gamma \in \Gamma$ ($\pT$
  defined as in \S \ref{sec:fibre-bundle}).
  Note that $\gamma'_v = 0$ for all $\gamma'\in \Gamma'$.
  Recall that $\pT(\Gamma) = \pT(\Gamma'')$ is the lattice defining
  $T^m$ in (\ref{eq:fibration}), therefore discrete in $F$.
  Take a suitable smooth function $\rho$ with compact support on $F$
  with variable $v$ such that $\sum_{\gamma''\in\Gamma''}\rho(v +
  \gamma''_v) \equiv 1$.
  Note that the sum is a sum of finitely many non-zero terms at
  each $v\in F$ due to the discreteness of $\Gamma''$.
  Define
  \begin{equation*}
    \hbar(z) := - \sum_{\gamma''\in\Gamma''} \rho(v + \gamma''_v)
    f_{\gamma''}(u) \; .
  \end{equation*}
  Then $\hbar$ is holomorphic along the fibre directions. 
  To see that it satisfies (\ref{eq:f-gamma-coboundary}), note that, for
  any $\gamma_0 = \gamma'_0 + \gamma''_0 \in \Gamma$ 
  where $\gamma'_0 \in \Gamma'$ and $\gamma''_0 \in \Gamma''$,
  \begin{equation*}
    \begin{split}
      \hbar(z + \gamma_0) &= - \sum_{\gamma''\in\Gamma''} \rho(v + \gamma''_v
      + (\gamma_0)_v)~f_{\gamma''}(u + (\gamma_0)_u) \\ &= -
      \sum_{\gamma''\in\Gamma''} \rho(v + \gamma''_v +
      (\gamma''_0)_v)~f_{\gamma''}(u + (\gamma''_0)_u) \\ &= -
      \sum_{\gamma''\in\Gamma''} \rho(v + \gamma''_v + (\gamma''_0)_v)
      \paren{f_{\gamma'' + \gamma''_0}(u) - f_{\gamma''_0}(u)} \\ &= \hbar(z)
      + f_{\gamma''_0}(u) \; ,
    \end{split}
  \end{equation*}
  using the fact that $f_{\gamma''}(u + \gamma'_u) = f_{\gamma''}(u)$
  for all $\gamma'\in \Gamma'$ (see (\ref{eq:f-gamma-properties}))
  and $\Gamma'' = \Gamma'' + \gamma''_0$. Applying
  (\ref{eq:f-gamma-properties}) again, one obtains
  \begin{equation*}
    f_{\gamma_0}(u) = f_{\gamma''_0}(u + (\gamma'_0)_u) +
    f_{\gamma'_0}(u) = f_{\gamma''_0}(u) \; .
  \end{equation*}
  This $\hbar$ therefore satisfies (\ref{eq:f-gamma-coboundary}).
\end{proof}

It follows from (\ref{eq:f-gamma-coboundary}) that $\frac{\diff}{\diff
  \conj{v^j}}\hbar$ and $\frac{\diff}{\diff \conj{u^i}}\hbar$ define
smooth functions on $X$ (note that $f_\gamma(u)$ are
holomorphic). Therefore, $\dbar\hbar$ is a (smooth) $1$-form on $X$,
so is $\diff\conj\hbar$.

Take any $\delta \in \kashf{}(X)$\showsym[delta]{$\delta$}{an
  auxiliary function in $\kashf{}(X)$ appears in the \\ definition of
  $\etaw$},
and let $\hbar_\delta := \hbar -
\delta$\showsym[h]{$\hbar_\delta$}{$:= \hbar - \delta$} for notational
convenience.
Define a hermitian metric on the wild part $L_\wildsub$ of $L$ by $\etaw(z) :=
e^{-2\Re\hbar_\delta(z)}$\showsym[etaw]{$\etaw$}{$:=
e^{-2\Re\hbar_\delta}$, a hermitian metric on
  $L_\wildsub$}.
The corresponding curvature form, called the
\emph{wild part of the curvature form of $L$},
is given by
\begin{equation*}
  \begin{split}
    \Theta_\wCurv &:= -\cplxi\diff\dbar\log\etaw = 2\cplxi\diff\dbar
    \Re\hbar_\delta \\
    &= \cplxi d\paren{\dbar\hbar_\delta - \diff\conj\hbar_\delta} \; .
  \end{split}
\end{equation*}
Note that, since $\dbar\hbar$ is a smooth $(0,1)$-form on $X$, $\cplxi
d\paren{\dbar\hbar_\delta - \diff\conj\hbar_\delta}$
is a $d$-exact smooth real $(1,1)$-form on $X$. 

The function $\delta$ is an auxiliary function which will be chosen
suitably according to the Weak $\diff\dbar$-Lemma of Takayama
\cite{Takayama}*{Lemma 3.14} (see also Lemma \ref{lem:weak-ddbarlemma})
in order to obtain the required $L^2$ estimates.
Details are given in \S 
\ref{sec:bound-wCurv}.

With the chosen $\etat$ and $\etaw$, a hermitian metric on $L$ is defined
by
\begin{equation} \label{eq:metric-on-L}
  \eta(z) := \etat(z)\etaw(z) = e^{-\pi \Hform(z,z) - 2\Re\hbar_\delta(z)} \; .
\end{equation}\showsym[etawt]{$\eta$}{$:= \etat \etaw$, a hermitian
  metric on $L$}%
The \emph{curvature form of $L$ with respect to $\eta$} is then given by
\begin{equation*}
  \Theta_\tCurv + \Theta_\wCurv \; ,
\end{equation*}
which represents the class $2\pi c_1(L)$ in $H^2(X,\fieldR)$ (while
$\Theta_\tCurv$ represents $2\pi c_1(L)$ in $2\pi H^2(X,\Znum)$).

\subsection{An $L^2$-norm, the $L^2$-spaces $\hilbFB {q'}{q''}_{c,\chi}$ and
  differential operators}
\label{sec:choice_of_metric}

Let $g$\showsym[G1]{$g$}{a translational invariant hermitian metric on $X$ such that
  $\Tgtf{1,0} \perp \Tgtb{1,0}$} be a hermitian metric on $X$.
Fix an apt coordinate system.
For the purpose of this article, \emph{$g$ is chosen to be a translational invariant
  metric such that the decomposition $\Tgt{1,0} = \Tgtf{1,0} \oplus
  \Tgtb{1,0}$ is orthogonal.}
Denote by $\omega := - \Im g$ the associated $(1,1)$-form as usual. 

Fix any holomorphic line bundle $L$. 
Consider any $0 < c \leq \infty$ and $0 \leq q \leq n$.
Denote the pointwise $2$-norm on $\smform{0,q}(K_c;L)$ induced from
the hermitian metrics $g$ and $\eta$ by
$\abs\cdot_{g,\eta}$\showsym[z]{$\abs\cdot_{g,\eta}$}{a pointwise norm on
  $\smform{0,q}(K_c;L)$ with respect to \\ hermitian metrics $g$ and $\eta$}.
Let also $\chiR \colon \fieldR_{\geq 0} \to \fieldR$ be a smooth function
and set $\chiX := \chiR \circ \varphi$. 
For the purpose of this article, \emph{$\chiR$ is always assumed to be
  a non-negative convex increasing function.}
In this case, $\chiX$ is plurisubharmonic.
Set $\abs{\zeta}_{g,\eta, \chi}^2 := \abs{\zeta}_{g,\eta}^2
e^{-\chiX}$.
Let $\mu$ be the measure induced from the volume form
$\frac{\omega^{\wedge n}}{n!}$.
Define
\begin{equation*}
  \norm\zeta_{K_c,\chi} :=  \sqrt{\int_{K_c} \abs{\zeta}_{g,\eta,\chi}^2
  d\mu}
  \qquad \text{for any }\zeta\in\smform{0,q}(K_c;L) \; .
\end{equation*}%
\showsym[z]{$\norm\cdot_{K_c,\chi}$
  (resp.~$\inner\cdot\cdot_{K_c,\chi}$)}{an $L^2$-norm (resp.~inner
  product) \\ on $\Ltwo{0,q}_{c,\chi}$ with respect to $g$ and $\eta$
  with weight $\chi$}%
Then 
$\norm\cdot_{K_c,\chi}$ defines an $L^2$-norm with weight $e^{-\chiX}$ (or
simply $\chi$) on $\smform{0,q}_0(K_c;L)$, the space of sections in
$\smform{0,q}(K_c;L)$ with compact support. 
To simplify notation, $d\mu$ in the integral is made implicit in what follows. 
The inner product corresponding to $\norm\cdot_{K_c,\chi}$ is denoted by
$\inner\cdot\cdot_{K_c,\chi}$.
The norm is written as $\norm\cdot_{K_c,g,\eta,\chi}$ to emphasize its
dependence on $g$ and $\eta$ when necessary.

Denote by $\Ltwo{0,q}_{c,\chi} := \Ltwo{0,q}_\chi(K_c;L)$ the Hilbert
space of ($\mu$-)measurable $L$-valued $(0,q)$-forms $\zeta$ on $K_c$ such
that $\norm\zeta_{K_c,\chi} < \infty$\showsym[L4]{$\Ltwo{0,q}_{c,\chi}
  := \Ltwo{0,q}_\chi (K_c;L)$}{the Hilbert
space of measurable $L$-valued \\ $(0,q)$-forms $\zeta$ on $K_c$ such
that $\norm\zeta_{K_c,\chi} < \infty$}.
It is well known that $\smform{0,q}_0(K_c;L) \subset \Ltwo{0,q}_{c,\chi}$ is a
dense subspace under the norm $\norm\cdot_{K_c,\chi}$.

For any $0\leq p',q' \leq n-m$ and $0 \leq p'',q'' \leq m$, define
\begin{gather*}
  \smFfb {p'}{p''}{q'}{q''} := \smooth\paren{\cTgtf{p',q'} \wedge \cTgtb{p'',q''}} \; ,
\end{gather*}%
\showsym[A5]{$\smFfb {p'}{p''}{q'}{q''}$}{$:=
  \smooth\paren{\cTgtf{p',q'} \wedge \cTgtb{p'',q''}}$}%
i.e.~a sheaf of germs of smooth sections of 
$\cTgtf{p',q'} \wedge \cTgtb{p'',q''}$ (defined in \S
\ref{sec:fibre-bundle}).
For other values of $p', p'', q'$ and $q''$, set $\smFfb {p'}{p''}{q'}{q''} := 0$.
Note that, for $0 \leq p,q \leq n$,
there is a decomposition
\begin{equation} \label{eq:form-decomposition}
  \smform{p,q} = \bigoplus_{\substack{p'+p'' = p\\ q'+q'' = q}}
  \smFfb{p'}{p''}{q'}{q''} \; .
\end{equation}
This decomposition depends on the choice of the decomposition
(\ref{eq:cotangent-split}).
Since the fibre and base directions are orthogonal to each other with
respect to $g$, the decomposition is also orthogonal with respect to $g$.
As only those sheaves with $p'+p'' = 0$ are considered in what
follows, set
\begin{equation*}
  \smfb {q'}{q''} := \smFfb 00{q'}{q''}
\end{equation*}%
\showsym[A5a]{$\smfb {q'}{q''}$}{$:= \smFfb 00{q'}{q''}$}%
for notational convenience.
Notice that $\kashf{0,q''}(L)$ is a subsheaf of $\smbform{0,q''}(L)$ for
$0 \leq q'' \leq m$. 
For any $c > 0$, denote also the space of sections in $\smfb {q'}{q''}(K_c;L)$
with compact support by $\smfb {q'}{q''}_0(K_c;L)$\showsym[A6]{$\smfb
  {q'}{q''}_0(K_c;L)$}{space of sections in $\smfb {q'}{q''}(K_c;L)$ \\ with compact
  support}.
Define
\begin{equation*}
  \hilbFB {q'}{q''}_{c,\chi} := \hilbFB {q'}{q''}_\chi (K_c; L) := \cl{\smfb {q'}{q''}_0
    (K_c;L)} \; ,
\end{equation*}%
\showsym[L5]{$\hilbFB {q'}{q''}_{c,\chi} := \hilbFB {q'}{q''}_\chi (K_c;
  L)$}{closure of $\smfb {q'}{q''}_0(K_c;L)$ in $\Ltwo{0,q'+q''}_{c,\chi}$ \\ with
  respect to $\norm\cdot_{K_c,\chi}$}%
i.e.~the closure of $\smfb {q'}{q''}_0(K_c;L)$ in $\paren{\Ltwo{0,q'+q''}_{c,\chi},
\norm\cdot_{K_c,\chi}}$.
Note that the decomposition
\begin{equation} \label{eq:hilbsp-decomposition}
  \Ltwo{0,q}_{c,\chi} = \bigoplus_{q'+q'' = q} \hilbFB{q'}{q''}_{c,\chi}
\end{equation}
induced from (\ref{eq:form-decomposition}) is also an orthogonal decomposition.

The operator $\dbar$ is decomposed into $\dbarf + \dbarb$%
\showsym[diffu]{$\dbarf$ (resp.~$\dbarb$)}{composition of $\dbar$
  followed by \\ the projection $\cTgt{0,1} \to \cTgtf{0,1}$
  (resp.~$\cTgt{0,1} \to \cTgtb{0,1}$)} according to
the decomposition (\ref{eq:cotangent-split}),  
where $\dbarf$ and $\dbarb$ are operators such that
\begin{gather*}
  \dbarf \colon \smfb {q'}{q''}(K_c;L) \to \smfb{q'+1}{q''} (K_c;L) \quad\text{and} \\ 
  \dbarb \colon \smfb {q'}{q''}(K_c;L) \to \smfb {q'}{q''+1} (K_c;L) \; .
\end{gather*}
Denote the \emph{formal adjoints} of $\dbarf$ and $\dbarb$ above
respectively by
\begin{gather*}
  \ethf \colon \smfb{q'+1}{q''} (K_c;L) \to \smfb {q'}{q''} (K_c;L) \quad\text{and} \\
  \ethb \colon \smfb {q'}{q''+1} (K_c;L) \to \smfb {q'}{q''} (K_c;L) 
\end{gather*}%
\showsym[diffzadjf]{$\ethf$ (resp.~$\ethb$)}{formal adjoint of
  $\dbarf$ (resp.~$\dbarb$)}%
(see, for example, \cite{Demailly}*{Ch.~VI, 1.5} for the definition).

Some basic facts about differential operators on Hilbert spaces are
recalled here.
Extend the action of these operators to $\hilbFB
{q'}{q''}_{c,\chi}$ in the sense of distributions (or currents).
Then, they define \emph{closed} (i.e.~having closed graph)
and \emph{densely defined} linear operators on $\hilbFB {q'}{q''}_{c,\chi}$ (see, for example,
\cite{Hoemander-PDE}*{Ch.~1} and \cite{Demailly-L2}*{Prop.~4.9}) with \emph{domain} given by
\begin{equation} \label{eq:domain-dbar}
  \Dom_{K_c,\chi}^{(q',q'')} T  \;\text{ (or $\Dom T$)}\; 
  := \set{\zeta\in\hilbFB {q'}{q''}_{c,\chi} \: : \: \norm{T \zeta}_{K_c,\chi} < \infty} \; ,
\end{equation}
where $T$ denotes any of the above operators.
Note that $T$ is densely defined since $\smfb {q'}{q''}_0 (K_c;L) \subset
\Dom_{K_c,\chi}^{(q',q'')} T$. 
An operator will be written as $(T,\Dom T)$ when the domain is 
emphasized.

Given $\dbarf \colon \hilbFB {q'}{q''}_{c,\chi} \to
\hilbFB {q'+1}{q''}_{c,\chi}$ and $\dbarb \colon \hilbFB {q'}{q''}_{c,\chi} \to \hilbFB
{q'}{q''+1}_{c,\chi}$ with domains given as in (\ref{eq:domain-dbar}),
their Hilbert space adjoints (also called Von Neumann's adjoints, see
for example \cite{Demailly}*{Ch.~VIII, \S 1} for a discussion on them) are
denoted respectively by
\begin{equation*}
  \dbarf^* \colon \hilbFB {q'+1}{q''}_{c,\chi} \to \hilbFB {q'}{q''}_{c,\chi} \; \quad\text{and}\quad
  \dbarb^* \colon \hilbFB {q'}{q''+1}_{c,\chi} \to \hilbFB {q'}{q''}_{c,\chi} \; ,
\end{equation*}%
\showsym[diffzHadjf]{$\dbarf^*$ (resp.~$\dbarb^*$)}{Hilbert space
  adjoint of $\dbarf$ (resp.~$\dbarb$)}%
which are closed and densely defined operators on $\hilbFB
{q'+1}{q''}_{c,\chi}$ and $\hilbFB {q'}{q''+1}_{c,\chi}$ respectively.
Denote also their domains of definition
respectively by $\Dom_{K_c,\chi}^{(q'+1,q'')}\dbarf^*$ and
$\Dom_{K_c,\chi}^{(q',q''+1)}\dbarb^*$.

In general, one has $\Dom_{K_c,\chi}^{(q'+1,q'')}\dbarf^* \subset
\Dom_{K_c,\chi}^{(q'+1,q'')}\ethf$ and $\dbarf^*\zeta = \ethf \zeta$ for
all $\zeta \in \Dom_{K_c,\chi}^{(q'+1,q'')}\dbarf^*$ (see, for
example, \cite{Demailly}*{Ch.~VIII, \S 3}).
The same holds true for $\dbarb^*$ and $\ethb$.

\section{$L^2$ estimates}
\label{sec:L2-estimates}

%
%

{ 
\newcommand{\hsp}{\mathfrak H}
\newcommand{\Nform}{\mathcal N}
\newcommand{\siu}[1]{\left[#1\right]_{\text{Siu}}}

\subsection{Existence of a solution of $\dbar\xi = \psi$}

The aim of this section is to show that, for $0 \leq q \leq m$, given
$\psi \in \kashf{0,q}(K_c;L) \cap \hilbFB 0q_{c,\chi}$
such that $\dbar\psi = 0$ on $K_c$,
there exists a weak solution $\xi \in \hilbFB 0{q-1}_{c,\chi}$ of the
$\dbar$-equation $\dbar \xi = \psi$ provided that an $L^2$ estimate is
satisfied.
When $c=\infty$, there exists a strong solution which lies in
$\kashf{0,q-1}(X;L)$.

First recall the following classical theorems for $L^2$ estimates
(see, for example, \cite{Hoemander}*{Lemmas 4.1.1 and 4.1.2} or
\cite{Demailly}*{Ch.~VIII, Thm.~1.2}). 
Let $\paren{\hsp_1,
  \inner\cdot\cdot_1}$, $\paren{\hsp_2, \inner\cdot\cdot_2}$ and
$\paren{\hsp_3, \inner\cdot\cdot_3}$ be some Hilbert spaces, and let
$(S,\Dom S)$ and $(T, \Dom T)$ be two closed (i.e.~closed graph) and
densely defined linear operators with domains $\Dom S \subset \hsp_2$
and $\Dom T \subset \hsp_1$ respectively such that 
\begin{equation*}
  \xymatrix{{\hsp_1} \ar[r]^-T & {\hsp_2} \ar[r]^-S & {\hsp_3}}
\end{equation*}
and $S\circ T = 0$, i.e.~$T(\Dom T) \subset \ker S := \set{\zeta \in
  \Dom S \: : \: S \zeta = 0}$.
Let $S^*$ and $T^*$ denote the Hilbert space adjoints of $S$ and $T$
respectively, which are also closed, densely defined and satisfies 
$T^* \circ S^* = 0$ (see, for example, \cite{Demailly}*{Ch.~VIII,
  Thm.~1.1}).

\begin{thm}[see \cite{Hoemander}*{Lemmas 4.1.1 and 4.1.2}]
  \label{thm:existence-weak-solution} 
  If there exists a constant $C >0$ such that
  \begin{equation} \label{eq:a-priori-estimate_general}
    \norm{S\zeta}_3^2 + \norm{T^*\zeta}_1^2 \geq C \norm{\zeta}_2^2
    \quad\text{for all }\zeta \in \Dom S \cap \Dom T^* \; ,
  \end{equation}
  then
  \begin{enumerate}
  \item \label{item:solve-T-eq} for every $\psi \in \ker S$, there exists $\xi \in \cl{\im
      T^*} \cap \Dom T$ such that $T\xi = \psi$ and $\norm\xi_1^2 \leq \frac{1}{C}
    \norm\psi_2^2$. In other words, $\ker S = \im T$ (and thus $\im T$
    is closed as $\ker S$ is so);
  \item \label{item:solve-T-star-eq} for every $\Psi \in \paren{\ker T}^\bot = \cl{\im T^*}$, there
    exists $\Xi \in \cl{\im T} \cap \Dom T^*$ such that $T^*\Xi =
    \Psi$ and $\norm\Xi_2^2 \leq \frac{1}{C} \norm\Psi_1^2$. In other
    words, $\cl{\im T^*} = \im T^*$.
  \end{enumerate}
\end{thm}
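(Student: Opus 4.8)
The plan is to prove this by the standard functional-analytic argument for $L^2$ existence theorems, which reduces the solvability of $T\xi = \psi$ to the a priori estimate \eqref{eq:a-priori-estimate_general} via the Hahn--Banach theorem and the closed range theorem. The two parts are essentially dual to each other, so I would prove part \eqref{item:solve-T-eq} in detail and then deduce part \eqref{item:solve-T-star-eq} by a symmetric argument (or directly from the closedness of $\im T^*$).

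For part \eqref{item:solve-T-eq}, fix $\psi \in \ker S$. The goal is to produce $\xi$ with $T\xi = \psi$, which I would obtain by first constructing a bounded linear functional on $\im T^*$ and then applying the Riesz representation theorem. The key decomposition is the orthogonal splitting $\hsp_2 = \ker S \oplus \cl{\im S^*}$; since $S \circ T = 0$ gives $\im T \subset \ker S$, and since $\cl{\im S^*} = (\ker S)^\bot$, any $\zeta \in \Dom S \cap \Dom T^*$ decomposes as $\zeta = \zeta_1 + \zeta_2$ with $\zeta_1 \in \ker S$ and $\zeta_2 \in \cl{\im S^*} \subset \ker T^*$. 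Then for such $\zeta$ one estimates
\begin{equation*}
  \abs{\inner\psi\zeta_2}^2 \leq \norm\psi_2^2 \, \norm{\zeta_1}_2^2
  \leq \frac1C \norm\psi_2^2 \, \norm{T^*\zeta}_1^2 \; ,
\end{equation*}
using $\inner\psi{\zeta_2} = 0$ (as $\psi \in \ker S$ and $\zeta_2 \perp \ker S$), the a priori estimate applied to $\zeta_1$, and the fact that $T^*\zeta = T^*\zeta_1$. This shows the map $T^*\zeta \mapsto \inner\psi\zeta$ is a well-defined bounded functional on $\im T^* \subset \hsp_1$ with norm at most $\frac{1}{\sqrt C}\norm\psi_2$, so by Hahn--Banach and Riesz there exists $\xi \in \hsp_1$ with $\norm\xi_1^2 \leq \frac1C \norm\psi_2^2$ and $\inner\psi\zeta = \inner\xi{T^*\zeta}$ for all $\zeta \in \Dom T^*$. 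This last identity says precisely $\xi \in \Dom T$ and $T\xi = \psi$; projecting $\xi$ onto $\cl{\im T^*}$ (which does not change $T\xi$, since $\ker T = (\cl{\im T^*})^\bot$) places the solution in $\cl{\im T^*} \cap \Dom T$ and can only decrease the norm.

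For part \eqref{item:solve-T-star-eq}, given $\Psi \in (\ker T)^\bot = \cl{\im T^*}$, the cleanest route is to note that part \eqref{item:solve-T-eq} already yields $\im T = \ker S$ closed; by the closed range theorem this forces $\im T^*$ to be closed as well, whence $\cl{\im T^*} = \im T^*$ and $\Psi$ itself lies in $\im T^*$, giving $\Xi$ with $T^*\Xi = \Psi$. The norm bound $\norm\Xi_2^2 \leq \frac1C \norm\Psi_1^2$ would then follow by choosing $\Xi \in \cl{\im T} \cap \Dom T^*$ (the component orthogonal to $\ker T^*$) and running the same Hahn--Banach estimate with the roles of the spaces exchanged, applying \eqref{eq:a-priori-estimate_general} to control $\norm\zeta_2$ by $\norm{S\zeta}_3 + \norm{T^*\zeta}_1$.

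The main obstacle, and the one point that must be handled carefully rather than as a formality, is the domain bookkeeping in the orthogonal decomposition step: one must verify that the component $\zeta_1 \in \ker S$ of an element $\zeta \in \Dom S \cap \Dom T^*$ still lies in $\Dom T^*$ with $T^*\zeta_1 = T^*\zeta$, which relies on $\zeta_2 \in \cl{\im S^*} \subset \ker T^*$ together with $T^* \circ S^* = 0$. This is exactly where the hypothesis $S \circ T = 0$ (equivalently $T^* \circ S^* = 0$) is used, and glossing over it is the usual source of error; everything else is the routine Hahn--Banach/Riesz machinery.
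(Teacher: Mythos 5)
Your argument is correct and is essentially the standard proof of this statement from the source the paper cites (H\"ormander, Lemmas 4.1.1 and 4.1.2); the paper itself quotes the theorem without proof, so there is no internal argument to compare against. Two minor points worth tightening: the decomposition $\zeta = \zeta_1 + \zeta_2$ and the resulting bound $\abs{\inner\psi\zeta_2} \leq C^{-1/2}\norm\psi_2\norm{T^*\zeta}_1$ should be established for \emph{every} $\zeta \in \Dom T^*$, not only for $\zeta \in \Dom S \cap \Dom T^*$ (this costs nothing, since $\zeta_1 \in \ker S \subset \Dom S$ automatically), because the identity $\inner\psi\zeta_2 = \inner\xi{T^*\zeta}_1$ must hold on all of $\Dom T^*$ before one may conclude $\xi \in \Dom T$ with $T\xi = \psi$; and in part (2) the norm bound requires no second Hahn--Banach argument, since $\Xi \in \cl{\im T} \subset \ker S$ gives $S\Xi = 0$, whence (\ref{eq:a-priori-estimate_general}) applied to $\Xi$ itself yields $C\norm\Xi_2^2 \leq \norm{T^*\Xi}_1^2 = \norm\Psi_1^2$ directly.
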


\begin{remark}
  By exchanging the roles of $S$ and $T^*$, one also gets $\ker T^* =
  \im S^*$ and $\cl{\im S} = \im S$ if the $L^2$ estimate
  (\ref{eq:a-priori-estimate_general}) is satisfied.
\end{remark}

When $X$ is compact, consider the complex
\begin{equation*}
  \xymatrix@1{{\Ltwo{0,q-1}(X;L)} \ar[r]^-\dbar & {\Ltwo{0,q}(X;L)}
    \ar[r]^-\dbar & {\Ltwo{0,q+1}(X;L)}} \; .
\end{equation*}
Murakami \cite{Murakami} shows that the $L^2$ estimates
(\ref{eq:a-priori-estimate_general}) hold for $q < s^-$ or $q > n -
s^+$ by choosing the hermitian metric $g$ suitably.
The $L^2$ estimate on $\Ltwo{0,q}(X;L)$ implies that the harmonic
$L$-valued $(0,q)$-forms must vanish.
Elements in $H^q(X,L)$ are represented by harmonic forms
when $X$ is compact, so this proves the vanishing of $H^q(X,L)$ in
the compact case.

In the current situation, although elements in $H^q(X,L)$ are not represented
by harmonic forms in general, the $L^2$ estimate
(\ref{eq:a-priori-estimate_general}) is still useful in solving
$\dbar$-equations which leads to the vanishing of $H^q(X,L)$ for
suitable $q$'s according to Theorem
\ref{thm:existence-weak-solution}~(\ref{item:solve-T-eq}).

Due to the existence of non-linearizable line bundles, it turns out it
is necessary to solve $\dbar$-equation on $K_c$ for any $0 < c
< \infty$ (see \S \ref{sec:bound-wCurv}).
Therefore, the aim now is to solve the $\dbar$-equation $\dbar\xi =
\psi|_{K_c}$ for a given $\psi \in \kashf{0,q}(X;L)$ with $\dbar\psi =
0$.
In view of the fibre bundle structure (\ref{eq:fibration}), instead of considering the complex
$\xymatrix@1{{\Ltwo{0,q-1}_{c,\chi}} \ar[r]^-\dbar & {\Ltwo{0,q}_{c,\chi}} \ar[r]^-\dbar
  & {\Ltwo{0,q+1}_{c,\chi}}}$, it is natural (see the discussion in \S
\ref{sec:methodology}) to consider the subcomplex
\begin{equation} \label{eq:short-dbar-complex}
  \xymatrix{{\hilbbsp{0, q-1}_{c,\chi}} \ar@<0.5ex>[r]^-{T_{q-1}} & {\hilbFBtwo[,\chi]} 
    \ar@<0.5ex>[l]^-{T_{q-1}^*} \ar@<0.5ex>[r]^-{S_q} &
    {\hilbFBthree[,\chi]} \ar@<0.5ex>[l]^-{S_q^*} } \; ,
\end{equation}%
\showsym[Tgq]{$T_{q-1}$ (resp.~$S_q$)}{a closed and densely
defined linear \\ operator $\hilbFB 0{q-1}_{c,\chi} \xrightarrow{T_{q-1}}
\hilbFBtwo[,\chi]$ such that $T_{q-1}\zeta = \dbar\zeta$ \\
(resp.~$\hilbFBtwo[,\chi] \xrightarrow{S_q}
\hilbFBthree[,\chi]$ such that $S_q\zeta = \dbar\zeta$)}%
where 
$T_{q-1}$ and $S_q$ act as $\dbar$ on $\hilbbsp{0,q-1}_{c,\chi}$ and
$\hilbFBtwo[,\chi]$ respectively, and $T_{q-1}^*$ and $S_q^*$ are
their Hilbert space adjoints.\footnote{The symbol $T_{q-1}$ (resp.~$S_q$)
is used instead of $\dbar$ so that the domains and codomains of the
two operators can be distinguished.
More precisely, if $\iota \colon \hilbFB 0{q-1}_{c,\chi}
\hookrightarrow \Ltwo{0,q-1}_{c,\chi}$ and $\pr \colon
\Ltwo{0,q}_{c,\chi} \to \hilbFBtwo[,\chi]$ are respectively the
inclusion and projection, then $T_{q-1} = \pr \circ \dbar \circ \iota$.
Therefore, $T_{q-1}^*$ and $\dbar^*$ are different operators.}
The Hilbert spaces in the complex are defined as
\begin{gather*}
  \begin{aligned}
    \smFBtwo &:= \smfb 1{q-1} \oplus \smbform{0,q}(K_c;L) \; ,
    \\
    \smFBthree &:= \smfb 2{q-1} \oplus \smfb 1{q} \oplus
    \smbform{0,q+1}(K_c;L) \; ;
  \end{aligned}
  \\
  \begin{aligned}
    \hilbFBtwo[,\chi] &:= \cl{\smFBtwo[0\,]} = \hilbFB 1{q-1}_{c,\chi} \oplus
    \hilbbsp{0,q}_{c,\chi} \; , \\
    \hilbFBthree[,\chi] &:= \cl{\smFBthree[0\,]} = \hilbFB 2{q-1}_{c,\chi} \oplus
    \hilbFB 1q_{c,\chi} \oplus \hilbbsp{0,q+1}_{c,\chi} \; .
  \end{aligned}
\end{gather*}%
\showsym[A8]{$\smFBtwo$}{$:= \smfb 1{q-1} \oplus \smbform{0,q}(K_c;L)$}%
\showsym[A9]{$\smFBthree$}{$:= \smfb 2{q-1} \oplus \smfb 1{q} \oplus
  \smbform{0,q+1}(K_c;L)$}%
\showsym[L6]{$\hilbFBtwo[,\chi]$}{$:= \hilbFB 1{q-1}_{c,\chi} \oplus \hilbbsp{0,q}_{c,\chi}$}%
\showsym[L7]{$\hilbFBthree[,\chi]$}{$:= \hilbFB 2{q-1}_{c,\chi} \oplus 
  \hilbFB 1q_{c,\chi} \oplus \hilbbsp{0,q+1}_{c,\chi}$}%
Recall from (\ref{eq:form-decomposition}) and
(\ref{eq:hilbsp-decomposition}) that all the direct sums on the right hand
sides above are orthogonal decompositions.
Denote the norms on $\hilbbsp{0,q-1}_{c,\chi}$, $\hilbFBtwo[,\chi]$ and
$\hilbFBthree[,\chi]$ respectively by $\norm\cdot_1$, $\norm\cdot_2$ and
$\norm\cdot_3$\showsym[z123]{$\norm\cdot_1$, $\norm\cdot_2$,
$\norm\cdot_3$}{the norms on resp.~$\hilbbsp{0,q-1}_{c,\chi}$, $\hilbFBtwo[,\chi]$,
$\hilbFBthree[,\chi]$}, and their inner products by $\inner\cdot\cdot$ with
the corresponding subscripts. 

Write the Hilbert space adjoint of $\dbar \colon \Ltwo{0,q-1}_{c,\chi}
\to \Ltwo{0,q}_{c,\chi}$ as $\dbar^*$.
Let $\pr \colon \Ltwo{0,q-1}_{c,\chi} \to \hilbFB 0{q-1}_{c,\chi}$ be
the orthogonal projection.
For later use, $(T_{q-1}^*, \Dom T_{q-1}^*)$ is described more explicitly.
\begin{prop} \label{prop:T-star}
  With the notation described above, one has
  \begin{equation*}
    \begin{aligned}
      \Dom T_{q-1}^* &= \Dom_{K_c,\chi} \dbar^* \cap \hilbFBtwo[,\chi]
      \\
      &= \Dom_{K_c,\chi}^{(1,q-1)} \dbarf^* \oplus
      \Dom_{K_c,\chi}^{(0,q)} \dbarb^* \; .
    \end{aligned}    
  \end{equation*}
  Moreover, for any $\zeta = \zeta' + \zeta'' \in \Dom T_{q-1}^*$ where
  $\zeta' \in \Dom_{K_c,\chi}^{(1,q-1)} \dbarf^*$ and $\zeta'' \in
  \Dom_{K_c,\chi}^{(0,q)} \dbarb^*$, one has $T_{q-1}^* \zeta = \pr
  \dbar^*\zeta = \dbarf^*\zeta' + \dbarb^*\zeta''$.
\end{prop}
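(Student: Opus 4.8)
The plan is to establish the two displayed descriptions of $\Dom T_{q-1}^*$ in turn, working throughout from the abstract definition of the Hilbert space (Von Neumann) adjoint together with the orthogonal decompositions $\Ltwo{0,q}_{c,\chi} = \bigoplus_{q'+q''=q}\hilbFB{q'}{q''}_{c,\chi}$ and $\hilbFBtwo[,\chi] = \hilbFB 1{q-1}_{c,\chi}\oplus\hilbbsp{0,q}_{c,\chi}$ recorded in (\ref{eq:hilbsp-decomposition}). The basic observation is that for $\phi\in\hilbbsp{0,q-1}_{c,\chi}$ the form $\dbar\phi = \dbarf\phi + \dbarb\phi$ already lies in $\hilbFB 1{q-1}_{c,\chi}\oplus\hilbbsp{0,q}_{c,\chi} = \hilbFBtwo[,\chi]$ as soon as it is $L^2$; hence, writing $\iota$ for the inclusion $\hilbbsp{0,q-1}_{c,\chi}\hookrightarrow\Ltwo{0,q-1}_{c,\chi}$ and $P$ for the orthogonal projection $\Ltwo{0,q}_{c,\chi}\to\hilbFBtwo[,\chi]$, one has $T_{q-1} = P\circ\dbar\circ\iota$, and $P$ acts as the identity on the range of $\dbar\circ\iota$.

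First I would record the two easy inclusions together with the formula for $T_{q-1}^*$. If $\zeta\in\Dom_{K_c,\chi}\dbar^*\cap\hilbFBtwo[,\chi]$, then for every $\phi\in\Dom T_{q-1}$ one has $\inner{T_{q-1}\phi}{\zeta}_2 = \inner{\dbar\phi}{\zeta} = \inner{\phi}{\dbar^*\zeta} = \inner{\phi}{\pr\,\dbar^*\zeta}_1$, using $\zeta\in\hilbFBtwo[,\chi]$ in the first equality and $\phi\in\hilbbsp{0,q-1}_{c,\chi}$ in the last; this shows $\zeta\in\Dom T_{q-1}^*$ with $T_{q-1}^*\zeta = \pr\,\dbar^*\zeta$. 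Similarly, writing $\zeta = \zeta' + \zeta''$ with $\zeta'\in\hilbFB 1{q-1}_{c,\chi}$ and $\zeta''\in\hilbbsp{0,q}_{c,\chi}$, the orthogonality of the images of $\dbarf$ and $\dbarb$ gives $\inner{T_{q-1}\phi}{\zeta}_2 = \inner{\dbarf\phi}{\zeta'} + \inner{\dbarb\phi}{\zeta''}$, so if $\zeta'\in\Dom_{K_c,\chi}^{(1,q-1)}\dbarf^*$ and $\zeta''\in\Dom_{K_c,\chi}^{(0,q)}\dbarb^*$ this equals $\inner{\phi}{\dbarf^*\zeta' + \dbarb^*\zeta''}$, whence $\zeta\in\Dom T_{q-1}^*$ and $T_{q-1}^*\zeta = \dbarf^*\zeta' + \dbarb^*\zeta''$; comparison with the previous line identifies this sum with $\pr\,\dbar^*\zeta$. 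These give the inclusions $\supseteq$ in both equalities.

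For the reverse inclusions I would start from $\zeta\in\Dom T_{q-1}^*$, so that $\phi\mapsto\inner{\dbarf\phi}{\zeta'} + \inner{\dbarb\phi}{\zeta''}$ is bounded on $\Dom T_{q-1} = \Dom\dbarf\cap\Dom\dbarb$ (the last equality again by the Pythagorean identity $\norm{\dbar\phi}^2 = \norm{\dbarf\phi}^2 + \norm{\dbarb\phi}^2$). Testing against $\phi\in\smfb 0{q-1}_0(K_c;L)$ and integrating by parts turns this into $\inner{\phi}{\ethf\zeta' + \ethb\zeta''} = \inner{\phi}{T_{q-1}^*\zeta}$, so that $\ethf\zeta' + \ethb\zeta''$ is an $L^2$-form, namely $T_{q-1}^*\zeta$. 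It then remains to separate this into its two summands and to identify $T_{q-1}^*\zeta$ with $\pr\,\dbar^*\zeta$, which would complete both reverse inclusions at once.

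The main obstacle is precisely this separation. The distributional identity $\ethf\zeta' + \ethb\zeta'' = T_{q-1}^*\zeta\in\hilbbsp{0,q-1}_{c,\chi}$ does not by itself force each of $\ethf\zeta'$ and $\ethb\zeta''$ to be square integrable, since both are $(0,(0,q-1))$-forms of the same type and could a priori cancel against one another. To rule this out I expect to use in an essential way that, under the chosen apt coordinates and the translational invariant metric $g$ with $\Tgtf{1,0}\perp\Tgtb{1,0}$, the operator $\ethf$ differentiates only in the fibre ($\conj{u}$) directions while $\ethb$ differentiates only in the base ($\conj{v}$) directions; combined with the density of $\smfb 0{q-1}_0(K_c;L)$ as a common core for $\dbarf$ and $\dbarb$, this should let me propagate the boundedness of the combined functional on the core to each of $\dbarf$ and $\dbarb$ separately, yielding $\zeta'\in\Dom_{K_c,\chi}^{(1,q-1)}\dbarf^*$ and $\zeta''\in\Dom_{K_c,\chi}^{(0,q)}\dbarb^*$. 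Once the separation is secured, the equalities $\Dom T_{q-1}^* = \Dom_{K_c,\chi}^{(1,q-1)}\dbarf^*\oplus\Dom_{K_c,\chi}^{(0,q)}\dbarb^* = \Dom_{K_c,\chi}\dbar^*\cap\hilbFBtwo[,\chi]$ and the formula $T_{q-1}^*\zeta = \pr\,\dbar^*\zeta = \dbarf^*\zeta' + \dbarb^*\zeta''$ follow immediately.
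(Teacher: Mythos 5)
Your two ``easy'' inclusions, and the identity $T_{q-1}^*\zeta=\pr\dbar^*\zeta=\dbarf^*\zeta'+\dbarb^*\zeta''$ on the smaller candidate domains, are fine; but the reverse inclusions --- which are the whole content of the proposition --- are not actually proved. You name the obstacle yourself: from $\zeta=\zeta'+\zeta''\in\Dom T_{q-1}^*$ you only obtain boundedness of the \emph{combined} functional $\phi\mapsto\inner{\dbarf\phi}{\zeta'}_2+\inner{\dbarb\phi}{\zeta''}_2$ on $\Dom T_{q-1}=\Dom\dbarf\cap\Dom\dbarb$, equivalently that the single distribution $\ethf\zeta'+\ethb\zeta''$ is square-integrable, and nothing rules out the two summands failing to be separately in $L^2$ while cancelling. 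Your proposed remedy (that the directional splitting of $\ethf$ and $\ethb$ together with the density of the common core ``should let me propagate the boundedness to each operator separately'') is a restatement of what must be shown, not an argument: no mechanism is given for extracting a bound on $\inner{\dbarf\phi}{\zeta'}_2$ alone from a bound on the sum, since the test forms $\phi$ range over an intersection of domains and cannot in general be chosen so as to kill one of the two terms.

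The paper sidesteps this difficulty entirely by running the adjoint computation in the opposite direction. It introduces two candidate operators: $W_1$ with domain $\Dom_{K_c,\chi}\dbar^*\cap\hilbFBtwo[,\chi]$ and action $\pr\dbar^*$, and $W_2$ with domain $\Dom_{K_c,\chi}^{(1,q-1)}\dbarf^*\oplus\Dom_{K_c,\chi}^{(0,q)}\dbarb^*$ and action $\zeta\mapsto\dbarf^*\zeta'+\dbarb^*\zeta''$. It then computes $W_1^*$ and $W_2^*$, shows both equal $(T_{q-1},\Dom T_{q-1})$, and concludes $T_{q-1}^*=W_1=W_2$ from biduality of closed densely defined operators. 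The point is that when one computes $W_2^*$ the test vectors range over the \emph{direct sum} $\Dom_{K_c,\chi}^{(1,q-1)}\dbarf^*\oplus\Dom_{K_c,\chi}^{(0,q)}\dbarb^*$, so either component may be set to zero and the boundedness condition splits into two separate conditions for free (the converse direction using only $\norm{\zeta'}_2+\norm{\zeta''}_2\leq\sqrt2\,\norm\zeta_2$); the separation that blocks your direct computation of $T_{q-1}^*$ simply never arises. To salvage your route you would have to supply a genuine proof of the separation step; otherwise the clean fix is to adopt the biduality argument.
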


{
\newcommand{\adjCnddt}{W}

\begin{proof}
  Define operators $(\adjCnddt_1,\Dom\adjCnddt_1)$ and
  $(\adjCnddt_2,\Dom\adjCnddt_2)$ from $\hilbFBtwo[,\chi]$ into $\hilbFB
  0{q-1}_{c,\chi}$ such that
  \begin{equation*}
    \begin{aligned}
      \Dom\adjCnddt_1 &:= \Dom_{K_c,\chi}\dbar^* \cap \hilbFBtwo[,\chi] \; , \\
      \Dom\adjCnddt_2 &:= \Dom_{K_c,\chi}^{(1,q-1)} \dbarf^* \oplus
      \Dom_{K_c,\chi}^{(0,q)} \dbarb^* \; ,
    \end{aligned}
  \end{equation*}
  and 
  \begin{equation*}
    \begin{aligned}
      \adjCnddt_1\zeta &:= \pr\dbar^*\zeta && \text{for } \zeta \in
      \Dom\adjCnddt_1 \; , \\
      \adjCnddt_2\zeta &:= \dbarf^*\zeta' + \dbarb^*\zeta'' && \text{for
      } \zeta = \zeta' + \zeta'' \in \Dom\adjCnddt_2 \; .
    \end{aligned}
  \end{equation*}
  These are closed and densely defined linear operators on $\hilbFBtwo[,\chi]$.
  Since $\norm{T_{q-1}\zeta}_{2}^2 = \norm{\dbar\zeta}_{2}^2 =
  \norm{\dbarf\zeta}_{2}^2 + \norm{\dbarb\zeta}_{2}^2$ for all $\zeta
  \in \hilbFB 0{q-1}_{c,\chi}$, it follows that
  \begin{equation*}
    \begin{aligned}
      \Dom T_{q-1} &= \Dom \dbar \cap \hilbFB 0{q-1}_{c,\chi} \\ 
      &= \Dom_{K_c,\chi}^{(0,q-1)} \dbarf \cap
      \Dom_{K_c,\chi}^{(0,q-1)} \dbarb \; .
    \end{aligned}
  \end{equation*}

  First is to show that $(T_{q-1}^*, \Dom T_{q-1}^*) =
  (\adjCnddt_1,\Dom\adjCnddt_1)$.
  Note that, for any $f \in \hilbFB 0{q-1}_{c,\chi}$ and any $\zeta
  \in \Dom\adjCnddt_1$, one has
  \begin{equation*}
    {\inner f{\adjCnddt_1\zeta}_1} = {\inner
      f{\pr\dbar^*\zeta}_1} = {\inner f{\dbar^*\zeta}_{K_c,\chi}} \; .
  \end{equation*}
  For any $\tilde \zeta \in \Ltwo{0,q}_{c,\chi} = \hilbFBtwo[,\chi]
  \oplus \paren{\hilbFBtwo[,\chi]}^\perp$, write $\tilde \zeta = \zeta
  + \zeta^\perp$ where $\zeta \in \hilbFBtwo[,\chi]$ and $\zeta^\perp
  \in \paren{\hilbFBtwo[,\chi]}^\perp = \bigoplus_{q'=2}^q \hilbFB
  {q'}{q-q'}_{c,\chi}$.
  Note that $\dbar^*\zeta^\perp \in \bigoplus_{q'=1}^{q-1}
  \hilbFB {q'}{q-1-q'}_{c,\chi} = \paren{\hilbFB
    0{q-1}_{c,\chi}}^\perp$, thus ${\inner
    f{\dbar^*\zeta^\perp}_{K_c,\chi}} = 0$ for any $f \in \hilbFB
  0{q-1}_{c,\chi}$.
  Therefore, for any $f \in \hilbFB 0{q-1}_{c,\chi}$, one has
  \begin{align*}
      &f \in \Dom \adjCnddt_1^* \\
      :\iff &\exists~C > 0 \colon
      \begin{aligned}[t]
        &\forall~\zeta \in \Dom\adjCnddt_1\: , \; \abs{\inner f{\adjCnddt_1\zeta}_1} =
        \abs{\inner f{\dbar^*\zeta}_{K_c,\chi}} \leq C \norm\zeta_2 
      \end{aligned}
      \\
      \iff &\exists~C > 0 \colon
      \begin{aligned}[t]
        &\forall~\tilde\zeta \in
        \Dom_{K_c,\chi}\dbar^* \: , \\ &\abs{\inner
          f{\dbar^*\tilde\zeta}_{K_c,\chi}} = \abs{\inner
          f{\dbar^*\zeta}_{K_c,\chi}} \leq C
        \norm{\tilde\zeta}_{K_c,\chi} 
      \end{aligned}
      \\
      \iff & f \in \Dom \dbar \cap \hilbFB 0{q-1}_{c,\chi} = \Dom
      T_{q-1}  \qquad\qquad \text{as }\paren{\dbar^*}^* = \dbar
  \end{align*}
  (ref.~\cite{Demailly}*{Ch.~VIII, \S 1} for the definition of the
  domain of Hilbert space adjoints), and thus $\Dom \adjCnddt_1^* = \Dom
  T_{q-1}$.
  It follows that $\inner f{\adjCnddt_1 \zeta}_1 = \inner
  f{\dbar^*\zeta}_{K_c,\chi} = \inner {\dbar f}\zeta_2 = \inner
  {T_{q-1}f} \zeta_2$ for any $f \in \Dom T_{q-1}$ and $\zeta \in
  \Dom\adjCnddt_1$.
  As a result, $(T_{q-1}, \Dom T_{q-1}) =
  (\adjCnddt_1^*,\Dom\adjCnddt_1^*)$, and hence $(T_{q-1}^*, \Dom T_{q-1}^*) =
  (\adjCnddt_1,\Dom\adjCnddt_1)$ (ref.~\cite{Demailly}*{Ch.~VIII, Thm.~1.1}).

  The proof of $(T_{q-1}^*, \Dom T_{q-1}^*) =
  (\adjCnddt_2,\Dom\adjCnddt_2)$ is similar.
  Notice that $\norm{\zeta}_2^2 = \norm{\zeta'}_2^2 +
  \norm{\zeta''}_2^2$ and thus $\norm{\zeta'}_2 + \norm{\zeta''}_2
  \leq \sqrt 2 \norm\zeta_2$ for all $\zeta = \zeta' + \zeta'' \in \hilbFBtwo[,\chi]$.
  Then, for any $f \in \hilbFB 0{q-1}_{c,\chi}$, one has
  \begin{equation*}
    \begin{aligned}
      &f \in \Dom \adjCnddt_2^* \\
      :\iff &\exists~C > 0 \colon
      \begin{aligned}[t]
        &\forall~\zeta = \zeta' + \zeta''
        \in \Dom\adjCnddt_2 \: , \\ &\abs{\inner f{\adjCnddt_2\zeta}_1} =
        \abs{\inner f{\dbarf^*\zeta' + \dbarb^*\zeta''}_{1}}
        \leq C \norm\zeta_2 
      \end{aligned}
      \\
      \iff &\exists~C > 0 \colon 
      \begin{aligned}[t]
        &\forall~\zeta' \in \Dom_{K_c,\chi}^{(1,q-1)}\dbarf^*
        \text{ and } \forall~\zeta'' \in
        \Dom_{K_c,\chi}^{(0,q)}\dbarb^* \: , \\ &
          \abs{\inner f{\dbarf^*\zeta'}_{1}} \leq C
          \norm{\zeta'}_{2}   \text{ and }
          \abs{\inner f{\dbarb^*\zeta''}_{1}} \leq C
          \norm{\zeta''}_{2} 
      \end{aligned}
      \\
      \iff & f \in \Dom_{K_c,\chi}^{(0,q-1)} \dbarf \cap
      \Dom_{K_c,\chi}^{(0,q-1)} \dbarb = \Dom T_{q-1} \; ,
    \end{aligned}
  \end{equation*}
  and thus $\Dom \adjCnddt_2^* = \Dom T_{q-1}$.
  Note that 
  $\inner f{\adjCnddt_2\zeta}_1 = \inner {\dbarf f}{\zeta'}_2
  + \inner{\dbarb f}{\zeta''}_2 = \inner{\dbarf f + \dbarb f}{\zeta' +
    \zeta''}_2 = \inner{T_{q-1}f}\zeta_2$ for $f \in \Dom T_{q-1}$ and
  $\zeta \in \Dom \adjCnddt_2$, since $\hilbFB 1{q-1}_{c,\chi}
  \perp \hilbFB 0q_{c,\chi}$.
  Therefore, one has $(T_{q-1}, \Dom T_{q-1}) =
  (\adjCnddt_2^*,\Dom\adjCnddt_2^*)$, and thus $(T_{q-1}^*, \Dom T_{q-1}^*) =
  (\adjCnddt_2,\Dom\adjCnddt_2)$ (ref.~\cite{Demailly}*{Ch.~VIII, Thm.~1.1}).
\end{proof}

}

Suppose now given $0 < c \leq \infty$ and $\psi \in \kashf{0,q}(K_c;L) \cap
\hilbbsp{0,q}_{c,\chi} \subset \hilbFBtwo[,\chi]$ such that $S_q \psi
= \dbar\psi = 0$. 
Theorem \ref{thm:existence-weak-solution}~(\ref{item:solve-T-eq}) asserts that, if the $L^2$ estimate
(\ref{eq:a-priori-estimate_general}) is satisfied, then there exists
$\xi \in \cl{\im T_{q-1}^*} \subset \hilbbsp{0,q-1}_{c,\chi}$ such
that 
\begin{equation} \label{eq:dbarb_eq}
  T_{q-1}\xi = \dbar\xi = \psi \quad\text{in }\hilbbsp{0,q}_{c,\chi} \; .
\end{equation}

One can have a further reduction. 
When $c = \infty$, 
since $(X,g)$ is complete in the sense of Riemannian
geometry, $\smFBCXtwo$ is dense in $\Dom_X T_{q-1}^* \cap \Dom_X S_q$
under the above graph norm (see, for example, \cite{Demailly}*{Ch.~VIII,
  Thm.~3.2}). 
Therefore, it suffices to establish the required $L^2$ estimates
(\ref{eq:a-priori-estimate_general}) for $\zeta \in \smFBCXtwo$.

Suppose $c < \infty$. 
Note that $\smFBCtwo \subset \Dom S_q$. 
Since $\bdry K_c$ is smooth and $\chiX$ is smooth on a
neighborhood of $\cl K_c$, using \cite{Hoermander-L2}*{Prop.~2.1.1}
together with an argument of partition
of unity, it yields the following
{
\newcommand{\wtd}{\widetilde}

\begin{prop}
  $\smFBCtwo \cap \Dom T_{q-1}^*$ is dense in $\Dom T_{q-1}^* \cap
  \Dom S_q$ under the graph norm
  $\sqrt{\norm{T_{q-1}^*\zeta}_1^2 + \norm{S_q\zeta}_3^2 +
    \norm{\zeta}_2^2}$.
\end{prop}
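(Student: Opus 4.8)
The plan is to establish this density by Friedrichs regularization localized through a partition of unity, following the classical argument of \cite{Hoermander-L2}*{Prop.~2.1.1} already cited in the statement. Fix $\zeta \in \Dom T_{q-1}^* \cap \Dom S_q$. Since $\bdry K_c = \set{\varphi = c}$ is smooth and the weight $\chiX = \chiR \circ \varphi$ is smooth on a neighbourhood of $\cl K_c$, I would first choose a finite cover $\set{U_j}$ of $\cl K_c$ by coordinate charts together with a subordinate smooth partition of unity $\set{\theta_j}$ with $\sum_j \theta_j \equiv 1$ near $\cl K_c$. By Proposition \ref{prop:T-star}, $T_{q-1}^*$ acts on its domain as $\dbarf^* \oplus \dbarb^*$, which coincides there with the formal adjoints $\ethf \oplus \ethb$, a first-order operator whose coefficients are smooth up to $\bdry K_c$ by the smoothness of $g$, $\eta$ and $\chiX$ near $\cl K_c$; likewise $S_q$ acts as $\dbar$. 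Consequently multiplication by each $\theta_j$ maps $\Dom T_{q-1}^*$ and $\Dom S_q$ into themselves, the commutators $[\theta_j, T_{q-1}^*]$ and $[\theta_j, S_q]$ being order-zero (contraction, resp.\ wedge, with $d\theta_j$) and hence bounded on the relevant $L^2$-spaces. Thus each $\theta_j \zeta$ again lies in $\Dom T_{q-1}^* \cap \Dom S_q$, and it suffices to approximate each $\theta_j \zeta$ in the graph norm by elements of $\smFBCtwo \cap \Dom T_{q-1}^*$.

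For a chart $U_j$ relatively compact in the interior $K_c$, the form $\theta_j \zeta$ is compactly supported in $K_c$, so I would convolve it with a Friedrichs mollifier $J_\varepsilon$. The mollified forms are smooth and compactly supported in $K_c$, hence lie in $\smFBCtwo$ (and trivially in $\Dom T_{q-1}^*$); they converge to $\theta_j \zeta$ in $\norm\cdot_2$, while Friedrichs' lemma on the commutator of a first-order operator with a mollifier yields $T_{q-1}^* J_\varepsilon(\theta_j\zeta) \to T_{q-1}^*(\theta_j\zeta)$ in $\norm\cdot_1$ and $S_q J_\varepsilon(\theta_j\zeta) \to S_q(\theta_j\zeta)$ in $\norm\cdot_3$ as $\varepsilon \to 0$.

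The delicate charts are those meeting $\bdry K_c$, where the constraint is that membership in $\Dom T_{q-1}^*$ encodes a boundary condition (the vanishing along $\bdry K_c$ of the component of $\zeta$ normal to the boundary, i.e.\ the $\dbar^*$-boundary condition). For such a chart I would flatten the boundary by a local diffeomorphism so that $K_c$ becomes $\set{t < 0}$ with $t := \varphi - c$ a smooth defining function, and then regularize by the inward-translation-then-mollify procedure of \cite{Hoermander-L2}*{Prop.~2.1.1}: translate $\theta_j\zeta$ a distance $\varepsilon$ into $K_c$ along the normal direction and convolve with $J_{\varepsilon'}$ for $\varepsilon' \ll \varepsilon$. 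This produces forms in $\smFBCtwo$ whose support remains inside $K_c$, so that they automatically satisfy the boundary condition and hence lie in $\Dom T_{q-1}^*$; convergence in all three norms follows once more from Friedrichs' lemma, using the smoothness of $\bdry K_c$ and of $\chiX$ near $\cl K_c$. Summing over $j$ then yields the desired approximant of $\zeta$. The main obstacle is precisely this boundary step: one must arrange the regularization so that the approximants lie in $\Dom T_{q-1}^*$ while \emph{simultaneously} controlling the adjoint $T_{q-1}^*$ and the operator $S_q$, which point in opposite directions of the complex (\ref{eq:short-dbar-complex}) and impose incompatible boundary behaviour ($\Dom S_q = \Dom\dbar$ carries no boundary condition, whereas $\Dom T_{q-1}^*$ does). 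This simultaneous control is exactly what the translation trick secures.
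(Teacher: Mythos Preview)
Your approach is essentially the same as the paper's: both reduce to H\"ormander's Proposition~2.1.1 via a partition of unity and Friedrichs regularization. Two points the paper makes explicit that you leave implicit deserve mention. First, convolution is only defined for scalar-valued forms, so on each chart $U_j$ one must first trivialize $L$; the paper carries this out carefully, checking that the local norms $\norm{\cdot}_{W,\eta}$ and $\norm{\cdot}_{W,\tilde\eta}$ agree under the trivialization so that convergence transfers back to $L$-valued forms. Second, the approximants must land in $\smFBCtwo = \smfb 1{q-1} \oplus \smfb 0q(\cl K_c;L)$, not merely in $\smform{0,q}(\cl K_c;L)$; the paper notes that mollification acts only on coefficients and hence preserves the $(q',q'')$-type determined by the global frames $d\conj{u^i}$, $d\conj{v^j}$. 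Your inward-translation argument near the boundary also preserves this type (translation is in the $\Im u$ direction, orthogonal to the $u$/$v$ splitting), but you should say so. With these two clarifications your proof is complete and matches the paper's.
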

\begin{proof}
  Note that the statement follows from
  \cite{Hoermander-L2}*{Prop.~2.1.1} when $\cTgt{0,q}_X$ and $L$ are
  both trivial by using a partition of unity.
  The aim now is to handle the case when $L$ is non-trivial.

  Take a locally finite open cover $\set{U_\alpha}_{\alpha \in A}$ of
  $X$ such that every $U_\alpha$ is a coordinate chart of $X$ and $L$ is
  trivialized on each $U_\alpha$ with transition functions
  $\sigma_{\alpha\beta} \in \holo_X^*(U_\alpha \cap U_\beta)$ for all
  $\alpha ,\beta \in A$.
  Then, for any $\zeta \in \Ltwo{0,q}_{\chi}(X;L)$ with $\zeta_\alpha$
  representing $\zeta$ over $U_\alpha$ under the trivialization, one
  has $\zeta_\alpha = \sigma_{\alpha\beta} \zeta_\beta$ on $U_\alpha
  \cap U_\beta$.

  Fix any $\zeta \in \Dom T_{q-1}^* \cap \Dom S_q$.
  It suffices to show that $\zeta$ can be approximated by a sequence
  $\seq{\zeta^{(\nu)}}_{\nu \in \Nnum} \subset \smFBCtwo
  \cap \Dom T_{q-1}^*$ under the given graph norm.

  Extend $\zeta$ by zero to a section on $X$.
  Using a partition of unity which decomposes $\zeta$ into a sum of
  finitely many compactly supported sections, one can assume that $\zeta$ is compactly
  supported in a coordinate chart $U := U_0 \in \set{U_\alpha}_{\alpha
    \in A}$.
  Then the hermitian metric $\eta$ on $L$ can be viewed as a function
  $\wtd\eta := \eta_0$ on $U = U_0$ (under the given trivialization),
  and any $L$-valued form $f \in \Ltwo{0,q}_{g,\eta,\chi}(U;L)$ can be
  viewed as a $\holo_X$-valued form $\wtd f := f_0 \in
  \Ltwo{0,q}_{g,\wtd\eta,\chi}(U)$.
  Let $W := U \cap K_c$.
  Note that one has $\norm{\wtd f}_{W,g,\wtd\eta,\chi} =
  \norm{f}_{W,g,\eta,\chi}$, $\norm{\dbar\wtd f}_{W,g,\wtd\eta,\chi} =
  \norm{\dbar f}_{W,g,\eta,\chi}$ and $\norm{\dbar^*\wtd
    f}_{W,g,\wtd\eta,\chi} =\norm{\dbar^*f}_{W,g,\eta,\chi}$
  for all $f \in \Ltwo{0,q}_{g,\eta,\chi}(W;L)$.
  Then $\zeta \in \Dom T_{q-1}^* \cap \Dom S_q$ implies $\wtd\zeta \in
  \Dom_{W,g,\wtd\eta,\chi} \dbar^* \cap \Dom_{W,g,\wtd\eta,\chi} \dbar \cap
  \hFBtwo[g,\wtd\eta,\chi](W)$.
  Since $g$ and $\chi$ are fixed in what follows, subscripts of
  them are omitted from the notations below.


  By \cite{Hoermander-L2}*{Prop.~2.1.1} (or applying
  \cite{Hoermander-L2}*{Prop.~1.2.4} directly), there exists a sequence
  $\seq{\wtd\zeta^{(\nu)}}_{\nu\in\Nnum} \subset \smform{0,q}(\cl W) \cap
  \Dom_{W,\wtd\eta} \dbar^*$ such that 
  \begin{equation*}
    \norm{\dbar^*\paren{\wtd\zeta^{(\nu)} - \wtd\zeta}}_{W,\wtd\eta}^2 +
    \norm{\dbar\paren{\wtd\zeta^{(\nu)} - \wtd\zeta}}_{W,\wtd\eta}^2 +
    \norm{\wtd\zeta^{(\nu)} - \wtd\zeta}_{W,\wtd\eta}^2 \tendsto 0
  \end{equation*}
  as $\nu \tendsto \infty$ and $\supp\wtd\zeta^{(\nu)} \Subset U$ for
  all $\nu \in \Nnum$.
  As $\wtd\zeta^{(\nu)}$'s are obtained from convolutions between
  smoothing kernels and $\wtd\zeta$ which do not change the type of
  forms, it follows that $\wtd\zeta^{(\nu)} \in \smfbtwo(\cl W)$.
  The sections $\zeta^{(\nu)} \in \smfbtwo(\cl W;L)$ defined by
  $\zeta^{(\nu)}_\alpha := \frac{1}{\sigma_{0\alpha}} \wtd\zeta^{(\nu)}$ on
  $U_\alpha \cap U \neq \emptyset$ are compactly supported in $U$
  (hence $\zeta^{(\nu)} \in \smFBCtwo$) and
  satisfy $\wtd{\zeta^{(\nu)}} = \wtd\zeta^{(\nu)}$. 
  Therefore, one obtains a sequence $\seq{\zeta^{(\nu)}}_{\nu\in\Nnum} \subset
  \Dom_{K_c,\eta} \dbar^* \cap \smFBCtwo = \Dom T_{q-1}^* \cap
  \smFBCtwo$ (see Proposition \ref{prop:T-star}) such that
  \begin{align*}
      &~\norm{T_{q-1}^*\paren{\zeta^{(\nu)} - \zeta}}_{1}^2 +
      \norm{S_q\paren{\zeta^{(\nu)} - \zeta}}_{3}^2 +
      \norm{\zeta^{(\nu)} - \zeta}_{2}^2 \\
      \leq
      &~\norm{{\dbar^*}\paren{\zeta^{(\nu)} - \zeta}}_{W,\eta}^2 +
      \norm{\dbar\paren{\zeta^{(\nu)} - \zeta}}_{W,\eta}^2 +
      \norm{\zeta^{(\nu)} - \zeta}_{W,\eta}^2 && \text{\parbox{3cm}{as $T_{q-1}^* =
      \pr\dbar^*$  by Prop.~\ref{prop:T-star}}} \\
      =
      &~\norm{\dbar^*\paren{\wtd\zeta^{(\nu)} - \wtd\zeta}}_{W,\wtd\eta}^2 +
      \norm{\dbar\paren{\wtd\zeta^{(\nu)} - \wtd\zeta}}_{W,\wtd\eta}^2 +
      \norm{\wtd\zeta^{(\nu)} - \wtd\zeta}_{W,\wtd\eta}^2 \\
      \tendsto &~ 0 && \text{as }\nu \tendsto \infty
  \end{align*}
  as required.
\end{proof}

}  

As a result, it suffices to establish the required $L^2$ estimates
(\ref{eq:a-priori-estimate_general}) for $\zeta \in \smFBCtwo \cap
\Dom T_{q-1}^*$.

The above discussion is summarized in the following
\begin{prop} \label{prop:estimate-imply-strong-sol}
  Suppose $0 < c \leq \infty$. 
  If there exists a constant $C >0$ such that
  \begin{multline} \label{eq:a_priori_estimate}
    \norm{S_q\zeta}_3^2 + \norm{T_{q-1}^*\zeta}_1^2 \geq C \norm{\zeta}_2^2 \\
    \text{for all }\zeta \in
    \begin{cases}
      \smFBCtwo \cap \Dom T_{q-1}^* & \text{when }c <
      \infty \; , \\
      \smFBCXtwo & \text{when }c = \infty \; ,
    \end{cases} 
  \end{multline}
  then, for every $\psi \in \kashf{0,q}(K_c; L)\cap
  \hilbbsp{0,q}_{\chi}(K_c;L)$ such that $\dbar\psi = 0$, there
  exists $\xi \in \hilbbsp{0,q-1}_\chi (K_c;L)$ such that $\dbar\xi = \psi$ in
  $\hilbFB 0q_{\chi}(K_c;L)$.
\end{prop}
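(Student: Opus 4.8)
The plan is to read the statement off from H\"ormander's existence theorem (Theorem \ref{thm:existence-weak-solution}), applied to the truncated complex (\ref{eq:short-dbar-complex}) with the identifications $S = S_q$ and $T = T_{q-1}$, so that $T^* = T_{q-1}^*$. Here $\hilbbsp{0,q-1}_{c,\chi}$, $\hilbFBtwo[,\chi]$ and $\hilbFBthree[,\chi]$ are the three Hilbert spaces of the complex, all operators involved are closed and densely defined (\S\ref{sec:choice_of_metric}), and $S_q \circ T_{q-1} = 0$ since $\dbar\circ\dbar = 0$. The hypothesis of Theorem \ref{thm:existence-weak-solution} is the estimate (\ref{eq:a-priori-estimate_general}) on \emph{all} of $\Dom S_q \cap \Dom T_{q-1}^*$, whereas the hypothesis (\ref{eq:a_priori_estimate}) of the present proposition only assumes it on the dense subset $\smFBCtwo \cap \Dom T_{q-1}^*$ (for $c<\infty$) or $\smFBCXtwo$ (for $c=\infty$). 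Thus the one substantive step is to propagate the estimate from this dense subset to the full domain intersection.

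To carry this out I would fix an arbitrary $\zeta \in \Dom S_q \cap \Dom T_{q-1}^*$ and approximate it by a sequence $\seq{\zeta^{(\nu)}}_{\nu\in\Nnum}$ in the relevant dense subset converging to $\zeta$ in the graph norm $\sqrt{\norm{T_{q-1}^*\zeta}_1^2 + \norm{S_q\zeta}_3^2 + \norm{\zeta}_2^2}$. Such a sequence is available precisely from the density results established just above: from the completeness of $(X,g)$ together with \cite{Demailly}*{Ch.~VIII, Thm.~3.2} when $c=\infty$, and from the density Proposition proved immediately before the present one when $c<\infty$. Applying (\ref{eq:a_priori_estimate}) to each $\zeta^{(\nu)}$ and letting $\nu\to\infty$, the graph-norm convergence gives $S_q\zeta^{(\nu)}\to S_q\zeta$, $T_{q-1}^*\zeta^{(\nu)}\to T_{q-1}^*\zeta$ and $\zeta^{(\nu)}\to\zeta$ in the respective Hilbert spaces, so the inequality $\norm{S_q\zeta}_3^2 + \norm{T_{q-1}^*\zeta}_1^2 \geq C\norm{\zeta}_2^2$ survives in the limit. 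Hence (\ref{eq:a-priori-estimate_general}) holds for every $\zeta \in \Dom S_q \cap \Dom T_{q-1}^*$.

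With the estimate now verified on the whole domain intersection, Theorem \ref{thm:existence-weak-solution} applies directly. The assumption $\dbar\psi = 0$ gives $\psi \in \ker S_q$, so part (\ref{item:solve-T-eq}) of that theorem yields $\xi \in \cl{\im T_{q-1}^*}\cap\Dom T_{q-1} \subset \hilbbsp{0,q-1}_{c,\chi}$ with $T_{q-1}\xi = \psi$ and $\norm{\xi}_1^2 \leq \frac{1}{C}\norm{\psi}_2^2$. Since $T_{q-1}$ acts as $\dbar$, this is exactly $\dbar\xi = \psi$ in $\hilbbsp{0,q}_{c,\chi}$ with $\xi\in\hilbbsp{0,q-1}_\chi(K_c;L)$, the desired weak solution. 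I expect no obstacle beyond the density step above; once the estimate is transported to the full domain the conclusion is an immediate citation of Theorem \ref{thm:existence-weak-solution}, and the genuinely hard work---verifying (\ref{eq:a_priori_estimate}) itself on the dense subset via Bochner--Kodaira formulas---is reserved for the later sections.
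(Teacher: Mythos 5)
Your proposal is correct and follows essentially the same route as the paper: the paper's ``proof'' of this proposition is precisely the preceding discussion, namely extending the estimate from the dense subsets (via completeness of $(X,g)$ and \cite{Demailly}*{Ch.~VIII, Thm.~3.2} when $c=\infty$, and via the density proposition based on \cite{Hoermander-L2}*{Prop.~2.1.1} when $c<\infty$) to all of $\Dom S_q\cap\Dom T_{q-1}^*$, and then invoking Theorem \ref{thm:existence-weak-solution}~(\ref{item:solve-T-eq}) for the truncated complex (\ref{eq:short-dbar-complex}). Your explicit graph-norm limiting argument is exactly the step the paper leaves implicit in the phrase ``it suffices to establish the required $L^2$ estimates for $\zeta$'' in the dense subset.
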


\begin{remark} \label{rem:regularity}
  Let $\Ltwo{0,q-1}(K_c;L;\text{loc})$ denote the space of locally $L^2$
  $L$-valued $(0,q-1)$-forms on $K_c$, which contains $\hilbFB
  0{q-1}_\chi(K_c;L)$ as a subspace.
  It follows from the classical regularity theory for $\dbar$-operator
  or elliptic operators (ref.~\cite{Hoemander}*{Thm.~4.2.5 and Cor.~4.2.6} or
  \cite{Hoemander-PDE}*{Thm.~4.1.5 and Cor.~4.1.2}) that the existence
  of $\xi \in \Ltwo{0,q-1}(K_c;L;\text{loc})$ satisfying the equation
  (\ref{eq:dbarb_eq}) in $\Ltwo{0,q}(K_c;L;\text{loc})$ implies that there exists $\xi \in
  \smform{0,q-1}(K_c;L)$ (but not necessarily in $\smfb
  0{q-1}(K_c;L)$) satisfying the same equation in $\smform{0,q}(K_c;L)$.
  In case $c = \infty$, Theorem \ref{rem:Kazama-Dolbeault} implies that
  there even exists a solution $\xi \in \kashf{0,q-1}(X;L)$ such that
  $\dbar\xi = \psi$ on $X$.
\end{remark}

\begin{remark}
  Write $\kashfL{0,q}(K_c;L) := \kashf{0,q}(K_c;L) \cap
  \hilbbsp{0,q}_{c,\chi}$.
  Following the idea discussed in \S \ref{sec:methodology}, it would
  be more natural to consider the $L^2$ estimate on
  $\hilbsp{0,q}_{c,\chi} := \cl{\kashfL{0,q}(K_c;L)}$ rather than
  $\hilbFBtwo[,\chi]$, where the closure is taken in $\hilbbsp{0,q}_{c,\chi}$. 
  However, the author faces the difficulty in obtaining the required estimate from the
  Bochner--Kodaira inequalities when $\hilbsp{0,q}_{c,\chi}$ instead of
  $\hilbFBtwo[,\chi]$ is considered. Write $\dksh c^*$ as the Hilbert space
  adjoint of $\dbar = \dbarb \colon \hilbsp{0,q}_{c,\chi} \to
  \hilbsp{0,q+1}_{c,\chi}$. 
  It can be shown that $\dksh c^* = \pr_c \circ \dbarb^*$ on
  $\Dom_{K_c,\chi}^{(0,q)} \dksh c^*$, where $\pr_c \colon \hilbbsp{0,q}_{c,\chi} \to
  \hilbsp{0,q}_{c,\chi}$ is the orthogonal projection. Set $\dperp c :=
  \dbarb^* - \dksh c^*$, then $\dksh c^*\zeta$ and $\dperp c \zeta$
  are orthogonal to each other for all $\zeta \in \Dom_{K_c,\chi}^{(0,q)} \dksh
  c^*$ and 
  \begin{equation*} 
    \norm{\dbarb^*\zeta}_{K_c,\chi}^2 = \norm{\dksh c^*\zeta}_{K_c,\chi}^2 +
    \bignorm{\dperp c\zeta}_{K_c,\chi}^2 \; .
  \end{equation*}
  From the Bochner--Kodaira inequalities, one obtains
  \begin{equation*}
    \norm{\dbar\zeta}_{K_c,\chi}^2 + \norm{\dbarb^*\zeta}_{K_c,\chi}^2 \geq \int_{K_c}
    \operatorname{Curv}(\zeta, \zeta)
  \end{equation*}
  for all $\zeta \in \kashfL{0,q}(K_c;L) \cap \Dom_{K_c,\chi}^{(0,q)}\dbar \cap
  \Dom_{K_c,\chi}^{(0,q)} \dbarb^*$, where $\int_{K_c} \operatorname{Curv}(\zeta,\zeta)$ is the
  curvature term arising from the curvature of $L$. By choosing
  suitably the metrics $g$ and $\eta$, the curvature term can be
  bounded below by $C \norm\zeta_{K_c,\chi}^2$ for some constant
  $C>0$. Therefore, in order to obtain the desired estimate
  $\norm{\dbar\zeta}_{K_c,\chi}^2 + \norm{\dksh c^*\zeta}_{K_c,\chi}^2 \geq
  C''\norm\zeta_{K_c,\chi}^2$ for some constant $C'' > 0$, one has 
  to show that $\norm{\dperp c\zeta}_{K_c,\chi}^2 \leq C'
  \norm{\zeta}_{K_c,\chi}^2$ for some constant $C'>0$ such that $C >
  C'$. However, the constant $C'$ depends on $g$ in general and one
  may not be able to make $C'$ smaller than $C$ by altering
  $g$. That's why the $L^2$ estimate on $\hilbFBtwo[,\chi]$ instead of
  $\hilbsp{0,q}_{c,\chi}$ is considered in this article.
\end{remark}


\subsection{Bochner--Kodaira formulas}

Let
\begin{equation*}
  \nabla \colon \smform{}(\cTgt{\bullet,\bullet} \otimes L) \to
  \smform{}(\cTgt{\fieldC} \otimes \cTgt{\bullet,\bullet} \otimes
  L) \; ,
\end{equation*}%
\showsym[nabla]{$\nabla$}{the connection on $\cTgt{\bullet,\bullet}
  \otimes L$ induced from the Chern connections on the holomorphic
  hermitian vector bundles $(\Tgt{1,0},g)$ and $(L, \eta e^{-\chi})$}%
where $\cTgt{\fieldC} := \cTgt{1,0} \oplus \cTgt{0,1}$, be the
connection on $\cTgt{\bullet,\bullet} \otimes L$ induced from the Chern
connections on the holomorphic hermitian vector bundles
$(\Tgt{1,0},g)$ and $(L, \eta e^{-\chi})$.
Therefore, $\nabla$ is
compatible with the pointwise norm $\abs\cdot_{g,\eta,\chi}$.

Under a chosen apt coordinate system, set $\diff_k := \frac{\diff}{\diff z^k}$
and $\diff_{\conj k} := \frac{\diff}{\diff\conj{z^k}}$ for $1\leq k \leq
n$\showsym[diffk]{$\diff_{\conj k}$ (resp.~$\diff_k$)}{$:=
  \diff/\diff\conj{z^k}$ (resp.~$\diff/\diff z^k$) for $1\leq k \leq
  n$}.
These define global vector fields on $X$.
Set $\nabla_k := \nabla_{\diff_k}$ and $\nabla_{\conj k} :=
\nabla_{\diff_{\conj k}}$\showsym[nablak]{$\nabla_{\conj k}$
  (resp.~$\nabla_k$)}{$:= \nabla_{\diff_{\conj k}}$ (resp.~$\nabla_{\diff_k}$)}
for $1\leq k \leq n$.
Set also $\nabla_{v^j} := \nabla_{n-m+j} = \nabla_{\frac{\diff}{\diff
    v^j}}$ and $\nabla_{\conj{v^j}} := \nabla_{\conj{n-m+j}} =
\nabla_{\frac{\diff}{\diff \conj{v^j}}}$\showsym[nablakj]{$\nabla_{\conj{v^j}}$
  (resp.~$\nabla_{v^j}$)}{$:= \nabla_{\conj{n-m+j}} =
  \nabla_{\frac{\diff}{\diff \conj{v^j}}}$
  (resp.~$\nabla_{n-m+j} = \nabla_{\frac{\diff}{\diff v^j}}$) for
  $1\leq j \leq m$} (and define $\diff_{v^j}$ and
$\diff_{\conj{v^j}}$\showsym[diffkj]{$\diff_{\conj{v^j}}$
  (resp.~$\diff_{v^j}$)}{$:= \diff_{\conj{n-m + j}} =
  \frac{\diff}{\diff \conj{v^j}}$ (resp.~$\diff_{n-m + j} =
  \frac{\diff}{\diff v^j}$)} similarly) for $1 \leq j \leq m$ for
notational convenience.
Since the hermitian metric $g$ is translational invariant on $X$,
the Christoffel symbols given from $g$ vanish and thus one has locally
\begin{equation} \label{eq:connection-in-local-coordinates}
  \begin{gathered}
    \nabla_k = \diff_k + \diff_k \log\paren{\eta e^{-\chiX}} \; ,  \\
    \nabla_{\conj k} = \diff_{\conj k} \;
  \end{gathered}
\end{equation}
for $1\leq k \leq n$. 
For later use, note that the commutator of $\nabla_k$ and
$\nabla_{\conj \ell}$ is given by
\begin{equation*}
  \Theta_{k\conj \ell} := \commut{\nabla_k}{\nabla_{\conj \ell}} =
  -\diff_k\diff_{\conj \ell} \log\paren{\eta e^{- \chiX}} \; ,
\end{equation*}
and the curvature form of $L$ endowed with the metric $\eta e^{-\chiX}$ is given
by
\begin{equation} \label{eq:curvature-tensor}
  \Theta := -\cplxi \diff\dbar \log\paren{\eta e^{-\chiX}} = \cplxi
  \sum_{k,\ell=1}^n \Theta_{k\conj \ell}~dz^k \wedge d\conj{z^\ell} \;
  .
\end{equation}%
\showsym[Theta]{$\Theta$}{$:= \cplxi\sum_{k,\ell = 1}^n \Theta_{k\conj
    \ell}~dz^k \wedge d\conj{z^\ell} := -\cplxi \diff\dbar
  \log\paren{\eta e^{-\chiX}}$}%
Write the curvature tensor associated to $\Theta$ as
\begin{equation*}
  \oTheta := \sum_{k,\ell = 1}^n \Theta_{k\conj \ell}~dz^k \otimes
  d\conj{z^\ell} \; .
\end{equation*}%
\showsym[RTheta]{$\oTheta$}{$:= \sum_{k,\ell = 1}^n \Theta_{k\conj
    \ell}~dz^k \otimes d\conj{z^\ell}$}%

Since the base and fibre directions are orthogonal to each other with
respect to $g$, the identification between $\smform{p,q}$ and
$\conj{\smcform{p,q}} = \smcform{q,p} :=
\smooth\paren{\Tgt{q,p}}$\showsym[A3]{$\smcform{p,q}$}{$:=
  \smooth\paren{\Tgt{p,q}}$} induced from $g$ respects the
decomposition (\ref{eq:cotangent-split}) ($\conj{\smcform{p,q}}$ here
means the complex conjugate of $\smcform{p,q}$).
For later use, set $\smcFfb {p'}{p''}{q'}{q''} := \smooth\paren{\Tgtf{p',q'} \wedge
    \Tgtb{p'',q''}}$\showsym[A4]{$\smcFfb {p'}{p''}{q'}{q''}$}{$:=
    \smooth\paren{\Tgtf{p',q'} \wedge \Tgtb{p'',q''}}$} and $\smcfb
  {p'}{p''} := \smcFfb {p'}{p''}00$\showsym[A4a]{$\smcfb {p'}{p''}$}{$:=
    \smcFfb {p'}{p''}00$} for $0 \leq p',q' \leq n-m$ and $0 \leq
  p'',q'' \leq m$.
For any $\zeta \in \smform{p,0} \otimes \smform{0,q}$,
let $\zeta^\dual$\showsym[vdual]{${\cdot}^\dual$}{isomorphism
  $\smform{p,0} \otimes \smform{0,q} \xrightarrow{\isom} \smcform{0,p}
  \otimes \smcform{q,0}$ induced from $g$} denote the image of $\zeta$
in $\smcform{0,p} \otimes \smcform{q,0}$ via the isomorphism induced
from $g$.
Then, for example, if $\zeta \in \smfb {q'}{q''}$, one has
$\zeta^\dual \in \smcfb {q'}{q''}$.

As a bilinear form on $\smcform{1,0}\otimes \conj{\smcform{1,0}}$,
$\oTheta$ can be
decomposed according to the decomposition (\ref{eq:cotangent-split})
into the sum of
\begin{equation*}
  \begin{aligned}
    \oTheta_{u\conj u} := \oTheta|_{\smcfb 10 \otimes \conj{\smcfb 10}} \; , && 
    \oTheta_{u\conj v} := \oTheta|_{\smcfb 10 \otimes \conj{\smcfb 01}} \; , \\
    \oTheta_{v\conj u} := \oTheta|_{\smcfb 01 \otimes \conj{\smcfb 10}} \; , &&
    \oTheta_{v\conj v} := \oTheta|_{\smcfb 01 \otimes \conj{\smcfb 01}} \; .
  \end{aligned}
\end{equation*}%
\showsym[RThetauv]{$\oTheta_{u\conj u} + \oTheta_{u\conj v} +
  \oTheta_{v\conj u} + \oTheta_{v\conj v}$}{decomposition of $\oTheta$
with respect to \\ the decomposition (\ref{eq:cotangent-split})}%
Since $\oTheta$ is a hermitian form, it follows that $\oTheta_{u\conj u} =
\conj{\oTheta_{u\conj u}}$, $\oTheta_{v\conj v} =
\conj{\oTheta_{v\conj v}}$ and $\oTheta_{u\conj v} =
\conj{\oTheta_{v\conj u}}$.

Let $\Tr_{g} \colon \smform{0,q} \otimes \smform{q,0} \to
\smform{0,0}$\showsym[Tg]{$\Tr_g$}{the complete contraction
  $\smform{0,q} \otimes \smform{q,0} \to \smform{0,0}$ via $g$} be the
trace operator which is defined in such a way that $\zeta \otimes \xi
\mapsto \xi^\dual \ctrct \zeta$, where $\zeta \in \smform{0,q}$, $\xi
\in \smform{q,0}$ and $\xi^\dual \ctrct \zeta$ denotes
the complete contraction between $\zeta$ and $\xi^\dual$. 
Denote by $\Tr_{g,\eta}$ the similar contraction for
$L$-valued forms. 

Fix any $0 < c < \infty$.
Denote the Hilbert space adjoint of $\dbar \colon
\Ltwo{0,q-1}_{c,\chi} \to \Ltwo{0,q}_{c,\chi}$ by $\dbar^* \colon
\Ltwo{0,q}_{c,\chi} \to \Ltwo{0,q-1}_{c,\chi}$.
Identify $\smform{1,1}$ and $\smform{1,0} \otimes \smform{0,1}$ via
the isomorphism $dz^k \wedge d\conj{z^\ell} \mapsto dz^k \otimes
d\conj{z^\ell}$ for any $1\leq k, \ell \leq n$. 
Let $\oTheta^\dual(\zeta \otimes \conj \zeta)$
(resp.~$\paren{\ddbar\varphi}^\dual(\zeta \otimes \conj \zeta)$)
denotes the natural contraction between $\oTheta^\dual$
(resp.~$\paren{\ddbar\varphi}^\dual$) and $\zeta \otimes \conj
\zeta$.
Let $\nabla = \nablalo + \nablaol$\showsym[nablalo]{$\nabla = \nablalo
  + \nablaol$}{the decomposition of $\nabla$ into $(1,0)$- and
  $(0,1)$-types} be the decomposition of $\nabla$ into $(1,0)$- and
$(0,1)$-types.
The $\conj\nabla$-Bochner--Kodaira formula (cf.~\cite{Siu}*{(2.1.4)
  and (1.3.3)})
is then given by
\begin{equation} \label{eq:classical-01-BK}
  \begin{aligned}
    \norm{\dbar\zeta}_{K_c,\chi}^2 + \norm{\dbar^*\zeta}_{K_c,\chi}^2
    = &~\int_{\bdry K_c} \tfrac{e^{-\chi}}{\abs{d\varphi}_g} \Tr_{g,\eta}
    \paren{\ddbar\varphi}^\dual(\zeta \otimes \conj \zeta) \\
    &~+ \norm{\nablaol \zeta}_{K_c,\chi}^2 
    + \int_{K_c} e^{-\chi} \Tr_{g,\eta}
    \oTheta^\dual(\zeta \otimes \conj \zeta)
  \end{aligned}
\end{equation}
for all $\zeta \in \smform{0,q}(\cl K_c;L) \cap \Dom_{K_c,\chi}
\dbar^*$.
\begin{remark}
  Note that the measure for the boundary integral is induced from
  $\parres{\frac{(d\varphi)^\dual}{\abs{d\varphi}_g} \ctrct
  \frac{\omega^{\wedge n}}{n!}}_{\bdry K_c}$.
  In order to compare notations in \cite{Siu}*{(2.1.4)} and those in
  (\ref{eq:classical-01-BK}), write $\siu{x}$ to mean the symbol $x$
  used in \cite{Siu}.
  Then
  \begin{gather*}
    \siu{\conj\nabla} = \nablaol \; , \quad \siu{\nabla} = \nablalo \;
    , \quad \siu\rho = \frac{\varphi - c}{\abs{d\varphi}_g} \; , \quad
    \siu{R_{i\conj j k \conj l}} = 0  \; , \\
    \text{and }\quad
    \siu{-\Omega_{\alpha \conj\beta s\conj t}} = \text{components of
    } \oTheta = \Theta_{k \conj \ell} \; .
  \end{gather*}
  Note that $\siu{R_{i\conj j k \conj l}} = 0$ as the Chern connection
  on $(\Tgt{1,0},g)$ is flat.
  Also be aware of the typos of the signs preceding the curvature
  integrals involving $\siu{\Omega_{\alpha \conj \beta \hphantom{s}
      \conj t}^{\hphantom{\alpha \conj \beta} \conj s}}$ and
  $\siu{R^{\conj s}_{\hphantom{s} \conj t}}$ in \cite{Siu}*{(2.1.4)}.
  The correct signs can be found in \cite{Siu}*{(1.3.3)}.
  To see that the boundary term in (\ref{eq:classical-01-BK})
  coincides with the one in \cite{Siu}*{(2.1.4)}, note that at every
  $z \in \bdry K_c$,
  \begin{equation*}
    \ddbar\paren{\frac{\varphi - c}{\abs{d\varphi}_g}} (z)
    = \frac{\ddbar\varphi}{\abs{d\varphi}_g}(z) 
    - \frac{\diff\varphi \wedge
        \dbar\abs{d\varphi}_g}{\abs{d\varphi}_g^2}(z) 
    - \frac{\diff\abs{d\varphi}_g \wedge
        \dbar\varphi}{\abs{d\varphi}_g^2}(z) \; .
  \end{equation*}
  After taking ${}^\dual$ and contracting with $\zeta \otimes
  \conj\zeta$ where $\zeta \in \smform{0,q}(\cl K_c;L) \cap
  \Dom_{K_c,\chi} \dbar^*$, the last two terms on the right hand side
  vanish because, for $\zeta \in \smform{0,q}(\cl K_c;L)$,
  $\paren{\diff\varphi}^\dual \ctrct \zeta = 0$ on $\bdry K_c$ if and
  only if $\zeta \in \Dom_{K_c,\chi} \dbar^*$
  (ref.~\cite{Hoermander-L2}*{pg.~101} or \cite{Siu}*{(2.1.1)}).
  The boundary terms therefore coincides.
\end{remark}


When the subcomplex (\ref{eq:short-dbar-complex}) is considered,
the $\conj\nabla$-Bochner--Kodaira formula (\ref{eq:classical-01-BK})
is restricted to $\zeta \in \smFBCtwo \cap \Dom_{K_c,\chi} \dbar^* =
\smFBCtwo \cap \Dom_{K_c,\chi} T_{q-1}^*$ (see Proposition
\ref{prop:T-star}).
%
%
The $(0,1)$-connect\-ion splits into $\nablaol = \nablaol_u +
\nablaol_v$ according to the decomposition (\ref{eq:cotangent-split}).
Write $\nabla_{\conj u} := \nablaol_u$ and $\nabla_{\conj v} :=
\nablaol_v$\showsym[nablau-]{$\nablaol = \nabla_{\conj u} +
  \nabla_{\conj v}$}{decomposition according to the decomposition
  (\ref{eq:cotangent-split})} for notational convenience.
Let also $\pr_F \colon \smform{0,q} \otimes
\conj{\smform{0,s}} \to \smbform{0,q} \otimes
\conj{\smbform{0,s}}$\showsym[prF]{$\pr_F$}{the canonical projection \\
   $\smform{0,q} \otimes \conj{\smform{0,s}} \to \smbform{0,q} \otimes
  \conj{\smbform{0,s}}$} be the canonical projection (where
$\conj{\smform{0,s}}$ (resp.~$\conj{\smfb 0s}$) is the complex conjugate of
$\smform{0,s}$ (resp.~$\smfb 0s$)).
Set 
\begin{equation} \label{eq:boundary-term}
  \Bd(\zeta,\zeta) := 
  \int_{\bdry K_c} \tfrac{e^{-\chiX}}{\abs{d\varphi}_g} \Tr_{g,\eta}
  \paren{\ddbar\varphi}^\dual (\zeta \otimes \conj{\zeta})
\end{equation}
for notational convenience.
Then (\ref{eq:classical-01-BK}) gives the following
\begin{lemma} \label{lem:BK-formula-01}
  For any $\zeta = \zeta' + \zeta'' \in \smFBCtwo \cap \Dom
  T_{q-1}^*$, 
  where $\zeta' \in \smfb 1{q-1}(\cl K_c;L) \cap \Dom
  \dbarf^*$ and $\zeta'' \in \smbform{0,q}(\cl K_c; L)$,
  one has
  \begin{equation} \label{eq:Bochner-Kodaira-01}
    \begin{aligned}
      \norm{S_q\zeta}_3^2 + \norm{T_{q-1}^*\zeta}_1^2 
      = &~\Bd(\zeta,\zeta) 
      + \norm{\dbarf\zeta''}_{3}^2 + \norm{\dbarb\zeta'}_{3}^2 \\
      &+ \norm{\nabla_{\conj u}\zeta'}_{K_c,\chi}^2 
      + \norm{\nabla_{\conj v}\zeta''}_{K_c,\chi}^2 \\
      &+ \int_{K_c} e^{-\chiX} \Tr_{g,\eta}
      \pr_F\paren{\oTheta^\dual(\zeta \otimes \conj \zeta)}  \; .
    \end{aligned}
  \end{equation}
\end{lemma}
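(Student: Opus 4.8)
The plan is to read off (\ref{eq:Bochner-Kodaira-01}) from the $\conj\nabla$-Bochner--Kodaira formula (\ref{eq:classical-01-BK}) applied to $\zeta = \zeta' + \zeta''$, decomposing every term by the orthogonal fibre/base splittings (\ref{eq:form-decomposition}) and (\ref{eq:hilbsp-decomposition}) while keeping $\norm{S_q\zeta}_3^2$ and $\norm{T_{q-1}^*\zeta}_1^2$ intact. Note first that $\zeta \in \smFBCtwo \cap \Dom T_{q-1}^* \subset \smform{0,q}(\cl K_c;L) \cap \Dom_{K_c,\chi}\dbar^*$ by Proposition \ref{prop:T-star}, so (\ref{eq:classical-01-BK}) applies, and the boundary term $\Bd(\zeta,\zeta)$ is common to both identities and needs no decomposition.

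For the left-hand side I would argue as follows. Since $S_q$ acts as $\dbar$ and $\zeta$ carries at most one $d\conj u$, the form $\dbar\zeta$ carries at most two, hence lies in $\hilbFBthree[,\chi]$; thus $\norm{\dbar\zeta}_{K_c,\chi}^2 = \norm{S_q\zeta}_3^2$. For the adjoint, write $\dbar^* = \dbarf^* + \dbarb^*$; as $\zeta''$ has no $d\conj u$ one has $\dbarf^*\zeta'' = 0$, so $\dbar^*\zeta = \paren{\dbarf^*\zeta' + \dbarb^*\zeta''} + \dbarb^*\zeta'$, a sum of a term in $\hilbbsp{0,q-1}_{c,\chi}$ and a term in $\hilbFB 1{q-2}_{c,\chi}$, which are orthogonal. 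By Proposition \ref{prop:T-star} the first is exactly $T_{q-1}^*\zeta$, giving $\norm{\dbar^*\zeta}_{K_c,\chi}^2 = \norm{T_{q-1}^*\zeta}_1^2 + \norm{\dbarb^*\zeta'}_{K_c,\chi}^2$.

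Next I would split $\nablaol = \nabla_{\conj u} + \nabla_{\conj v}$. The four forms $\nabla_{\conj u}\zeta'$, $\nabla_{\conj u}\zeta''$, $\nabla_{\conj v}\zeta'$, $\nabla_{\conj v}\zeta''$ lie in mutually orthogonal summands (any two differ either in the direction of differentiation or in the fibre/base type of the form part), so $\norm{\nablaol\zeta}_{K_c,\chi}^2$ is the sum of their four squared norms. Substituting the three decompositions into (\ref{eq:classical-01-BK}), cancelling $\Bd(\zeta,\zeta)$, and using the purely algebraic identity $\norm{\dbarf\zeta''}_3^2 = \norm{\nabla_{\conj u}\zeta''}_{K_c,\chi}^2$ (valid because wedging $d\conj{u^i}$ onto the pure-base form $\nabla_{\conj{u^i}}\zeta''$ is an isometry), the claim reduces to the single identity
\begin{equation*}
  \norm{\dbarb\zeta'}_3^2 + \norm{\dbarb^*\zeta'}_{K_c,\chi}^2
  = \norm{\nabla_{\conj v}\zeta'}_{K_c,\chi}^2
  + \int_{K_c} e^{-\chiX}\Tr_{g,\eta}
    \paren{(1-\pr_F)\oTheta^\dual(\zeta\otimes\conj\zeta)} \; .
\end{equation*}

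I would establish this last identity as a Bochner--Kodaira formula in the base directions alone. Writing $\zeta' = \sum_i d\conj{u^i}\wedge\zeta'_i$ with each $\zeta'_i$ a pure-base $(0,q-1)$-form, the operators $\dbarb$, $\dbarb^*$ and $\nabla_{\conj v}$ act only on $\zeta'_i$, with $d\conj{u^i}$ and $L$ as spectators, and the commutator $\commut{\nabla_{v^j}}{\nabla_{\conj{v^k}}}$ contributes only $\oTheta_{v\conj v}$ (the metric $g$ being translation invariant, hence flat, so no tangent-bundle curvature enters). Because $\varphi = \norm{\Im u}^2$ is independent of $v$, the Levi form $\cplxi\diff\dbar\varphi$ has no base component, so the integration by parts in the $v$-variables produces no boundary term. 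The main obstacle is the curvature bookkeeping: one must check, by an index computation, that $(1-\pr_F)\oTheta^\dual(\zeta\otimes\conj\zeta)$ collapses to exactly this pure-$\zeta'$ base curvature — every multi-index surviving the complementary projection carries the spectator fibre index, which forces both curvature indices to be base and makes all cross terms between $\zeta'$ and $\zeta''$, as well as the entire $\zeta''$-contribution, drop out. Verifying both the vanishing of the boundary term and this collapse is the heart of the argument; the orthogonality built into (\ref{eq:form-decomposition}) and (\ref{eq:hilbsp-decomposition}) is what makes every remaining piece line up with (\ref{eq:Bochner-Kodaira-01}).
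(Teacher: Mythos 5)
Your proposal is correct and follows essentially the same route as the paper's proof: apply the $\conj\nabla$-Bochner--Kodaira formula (\ref{eq:classical-01-BK}) to $\zeta$, decompose $\dbar^*\zeta = T_{q-1}^*\zeta + \dbarb^*\zeta'$ and $\nablaol$ orthogonally, identify $\norm{\nabla_{\conj u}\zeta''}_{K_c,\chi}^2$ with $\norm{\dbarf\zeta''}_3^2$, and absorb the leftover $\norm{\dbarb^*\zeta'}^2$ via a base-direction Bochner--Kodaira identity for $\zeta'$ whose boundary term vanishes because $\diffb\dbarb\varphi = 0$. Your residual curvature identity $\Tr_{g,\eta}\paren{(1-\pr_F)\oTheta^\dual(\zeta\otimes\conj\zeta)} = \Tr_{g,\eta}\oTheta_{v\conj v}^\dual(\zeta'\otimes\conj{\zeta'})$ is exactly the component bookkeeping the paper carries out at the end of its proof.
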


\begin{proof}
  On $\Dom_{K_c,\chi} \dbar^*$, one has $\dbar^* = \ethf + \ethb$.
  Then, for all $\zeta = \zeta' + \zeta'' \in \Dom T_{q-1}^* =
  \Dom_{K_c,\chi} \dbar^* \cap \hilbFBtwo[,\chi]$ (see Proposition
  \ref{prop:T-star}),
  one has
  \begin{equation*}
    \dbar^* \zeta = \ethf\zeta' + \ethf\zeta'' + \ethb\zeta' +
    \ethb\zeta'' = T_{q-1}^* \zeta + \ethb\zeta' \; ,
  \end{equation*}
  as $T_{q-1}^* \zeta = \dbarf^*\zeta' + \dbarb^*\zeta''$ (see
  Proposition \ref{prop:T-star}) and $\ethf \zeta'' = 0$.
%
  Note also that
  $\nablaol \zeta = \nabla_{\conj
    u}\zeta' + \nabla_{\conj u}\zeta'' + \nabla_{\conj v}\zeta' +
  \nabla_{\conj v}\zeta''$, and $\dbar\zeta = S_q\zeta$.
  Since the decomposition (\ref{eq:cotangent-split}) is orthogonal
  with respect to $g$, it follows that
  \begin{align*}
    \norm{\dbar^*\zeta}_{K_c,\chi}^2 &=
    \norm{T_{q-1}^*\zeta}_1^2 + \norm{\ethb\zeta'}_{K_c,\chi}^2 
    \quad\text{and } \\
    \norm{\nablaol\zeta}_{K_c,\chi}^2 &= 
    \norm{\nabla_{\conj u}\zeta'}_{K_c,\chi}^2 
    + \norm{\nabla_{\conj u}\zeta''}_{K_c,\chi}^2 
    + \norm{\nabla_{\conj v}\zeta'}_{K_c,\chi}^2 
    + \norm{\nabla_{\conj v}\zeta''}_{K_c,\chi}^2 \; .
  \end{align*}
  Note that $\norm{\nabla_{\conj u}\zeta''}_{K_c,\chi}^2 =
  \norm{\dbarf\zeta''}_3^2$.

  Following the argument in \cite{Hoermander-L2}*{pg.~101} with
  $\dbarb$ in place of $\dbar$, it follows that, for any $\zeta \in
  \smfb {q'}{q''}(\cl K_c;L)$,
    $\zeta \in \Dom_{K_c,\chi}^{(q',q'')} \dbarb^*$ if and only if
    $(\diffb\varphi)^\dual \ctrct \zeta = 0$ on $\bdry K_c$.
  Since $\diffb \varphi = 0$, it follows that $\smfb {q'}{q''}(\cl K_c;L)
  \subset \Dom_{K_c,\chi}^{(q',q'')} \dbarb^*$. 
  In particular, $\zeta' \in \Dom_{K_c,\chi}^{(1,q-1)} \dbarb^*$ for
  all $\zeta' \in \smfb 1{q-1}(\cl K_c;L)$.
  Then, since the decomposition (\ref{eq:cotangent-split}) is
  orthogonal with respect to $g$, by taking the analogy between the decompositions
  $\smform r = \bigoplus_{p+q =r} \smform{p,q}$ and $\smform{p,q} =
  \bigoplus_{\substack{p = p' + p'' \\ q = q' + q''}}
  \smFfb{p'}{p''}{q'}{q''}$ and putting $\dbarb$ in place of $\dbar$,
  one can follow the derivation of 
  (\ref{eq:classical-01-BK}) as in \cite{Siu}*{\S 1 and \S 2} to obtain
  \begin{equation*}
    \norm{\dbarb\zeta'}_{K_c,\chi}^2 +
    \norm{\ethb\zeta'}_{K_c,\chi}^2 
    =
    \begin{aligned}[t]
      &\int_{\bdry K_c} \tfrac{e^{-\chiX}}{\abs{d\varphi}_g}
      \Tr_{g,\eta} \paren{\diffb\dbarb\varphi}^\dual \paren{\zeta'
        \otimes \conj{\zeta'}} \\
      + &\norm{\nabla_{\conj
          v}\zeta'}_{K_c,\chi}^2 + \int_{K_c} e^{-\chiX} \Tr_{g,\eta}
      \oTheta_{v\conj v}^\dual (\zeta' \otimes \conj{\zeta'})
    \end{aligned}
  \end{equation*}
  for any $\zeta' \in \smfb 1{q-1}(\cl K_c;L)$.
  The boundary term vanishes as $\diffb\dbarb\varphi = 0$.
  Therefore, combining the above results with (\ref{eq:classical-01-BK}) yields
  \begin{equation*}
    \begin{aligned}
      \norm{S_q\zeta}_3^2 + \norm{T_{q-1}^*\zeta}_1^2 
      = &~\Bd(\zeta,\zeta) 
      + \norm{\dbarf\zeta''}_{3}^2 + \norm{\dbarb\zeta'}_{3}^2
      + \norm{\nabla_{\conj u}\zeta'}_{K_c,\chi}^2 
      + \norm{\nabla_{\conj v}\zeta''}_{K_c,\chi}^2 \\
      &+ \int_{K_c} e^{-\chiX} \Tr_{g,\eta} \oTheta^\dual(\zeta \otimes
      \conj \zeta) - \int_{K_c} e^{-\chiX} \Tr_{g,\eta} \oTheta_{v\conj
        v}^\dual (\zeta' \otimes \conj{\zeta'}) \; .
    \end{aligned}
  \end{equation*}

  For every fixed $z \in K_c$, $\Tr_{g,\eta} \oTheta^\dual (\zeta \otimes \conj\zeta)$
  is a hermitian form in $\zeta$.
  Again, since the decomposition (\ref{eq:cotangent-split}) is
  orthogonal with respect to $g$, it follows that
  \begin{align*}
    \Tr_{g,\eta} \oTheta^\dual (\zeta' \otimes \conj{\zeta'})
    &= \Tr_{g,\eta} \oTheta_{u\conj u}^\dual (\zeta' \otimes
    \conj{\zeta'}) 
    + \Tr_{g,\eta} \oTheta_{v\conj v}^\dual (\zeta' \otimes
    \conj{\zeta'}) \; , \\
    \conj{\Tr_{g,\eta} \oTheta^\dual (\zeta' \otimes \conj{\zeta''})}
    &= \Tr_{g,\eta} \oTheta^\dual (\zeta'' \otimes \conj{\zeta'}) 
    = \Tr_{g,\eta} \oTheta_{v\conj u}^\dual (\zeta'' \otimes
    \conj{\zeta'}) \; , \\
    \Tr_{g,\eta} \oTheta^\dual (\zeta'' \otimes \conj{\zeta''})
    &= \Tr_{g,\eta} \oTheta_{v\conj v}^\dual (\zeta'' \otimes
    \conj{\zeta''}) \; .
  \end{align*}
  On the other hand, one has
  \begin{equation*}
    \begin{aligned}
      \pr_F \paren{\oTheta^\dual (\zeta' \otimes \conj{\zeta'})} 
      &= \oTheta_{u\conj u}^\dual (\zeta' \otimes \conj{\zeta'}) \; , &
      \pr_F \paren{\oTheta^\dual (\zeta' \otimes \conj{\zeta''})} 
      &= \oTheta_{u\conj v}^\dual (\zeta' \otimes
      \conj{\zeta''}) \; , \\
      \pr_F \paren{\oTheta^\dual (\zeta'' \otimes \conj{\zeta'})} 
      &= \oTheta_{v\conj u}^\dual (\zeta'' \otimes \conj{\zeta'}) \; , &
      \pr_F \paren{\oTheta^\dual (\zeta'' \otimes \conj{\zeta''})} 
      &= \oTheta_{v\conj v}^\dual (\zeta'' \otimes \conj{\zeta''}) \; . 
    \end{aligned}  
  \end{equation*}
  Therefore, it follows that
  \begin{equation*}
    \begin{aligned}
      \Tr_{g,\eta} \oTheta^\dual (\zeta \otimes \conj{\zeta}) 
      - \Tr_{g,\eta} \oTheta_{v\conj v}^\dual (\zeta' \otimes
      \conj{\zeta'}) 
      &=
      \begin{aligned}[t]
        &\Tr_{g,\eta} \oTheta_{u\conj u}^\dual (\zeta' \otimes
        \conj{\zeta'}) 
         + \Tr_{g,\eta} \oTheta_{u\conj v}^\dual (\zeta'
         \otimes \conj{\zeta''}) 
        \\
        &~+ \Tr_{g,\eta} \oTheta_{v\conj u}^\dual (\zeta''
        \otimes \conj{\zeta'})
        + \Tr_{g,\eta}
        \oTheta_{v\conj v}^\dual (\zeta'' \otimes
        \conj{\zeta''})
      \end{aligned}
      \\
      &= \Tr_{g,\eta} \pr_F \paren{\oTheta^\dual (\zeta \otimes \conj{\zeta})}
    \end{aligned}
  \end{equation*}
  and hence the lemma.
\end{proof}

Let $g_F := \pr_F g$, and let $(g_F)^{\conj j j'}$'s for $1 \leq j, j'
\leq m$ be the entries of the inverse of the matrix of $g_F$ under
the chosen coordinates.
Denote by $\ptinner\cdot\cdot_{g,\eta,\chi}$ the pointwise inner
product induced from $\abs\cdot_{g,\eta,\chi}$.
Write $\nablalo = \nablalo_u +
\nablalo_v$\showsym[nablav]{$\nablalo = \nabla_u +
  \nabla_v$}{decomposition according to the decomposition
  (\ref{eq:cotangent-split})} as the splitting of
$\nablalo$ according to the decomposition (\ref{eq:cotangent-split}),
and set $\nabla_u := \nablalo_u$ and $\nabla_v := \nablalo_v$ for
convenience.
The following integration by parts argument is put into a lemma for clarity.
\begin{lemma}
  For all $\zeta'' \in \smfb 0q(\cl K_c;L)$, one has
  \begin{equation*}
    \norm{\nabla_{\conj v}\zeta''}_{K_c,\chi}^2 =
    \norm{\nabla_v\zeta''}_{K_c,\chi}^2 -  \int_{K_c} \paren{\Tr_g
      \oTheta_{v\conj v}} \abs{\zeta''}_{g,\eta,\chi}^2 \; .
  \end{equation*}
\end{lemma}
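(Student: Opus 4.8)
The plan is to prove this as a Bochner-type integration-by-parts identity in the base directions, the one genuinely non-formal ingredient being the vanishing of the boundary contributions. Throughout I write $\ptinner{\cdot}{\cdot}$ for the pointwise inner product $\ptinner\cdot\cdot_{g,\eta,\chi}$ and use that $\nabla$ is compatible with it: for smooth $L$-valued base forms $\alpha,\beta$ one has $\diff_{v^j}\ptinner{\alpha}{\beta} = \ptinner{\nabla_{v^j}\alpha}{\beta} + \ptinner{\alpha}{\nabla_{\conj{v^j}}\beta}$ and, taking the conjugate $(0,1)$-part, $\diff_{\conj{v^{j'}}}\ptinner{\alpha}{\beta} = \ptinner{\nabla_{\conj{v^{j'}}}\alpha}{\beta} + \ptinner{\alpha}{\nabla_{v^{j'}}\beta}$. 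Here $\nabla_v = \nablalo_v$ and $\nabla_{\conj v} = \nablaol_v$ act on $\cTgtb{0,q}\otimes L$, and the two norms in the statement are $\norm{\nabla_v\zeta''}_{K_c,\chi}^2 = \int_{K_c}\sum_{j,j'}(g_F)^{\conj j j'}\ptinner{\nabla_{v^j}\zeta''}{\nabla_{v^{j'}}\zeta''}$ and $\norm{\nabla_{\conj v}\zeta''}_{K_c,\chi}^2 = \int_{K_c}\sum_{j,j'}(g_F)^{\conj j j'}\ptinner{\nabla_{\conj{v^j}}\zeta''}{\nabla_{\conj{v^{j'}}}\zeta''}$, contracted with the \emph{same} (Hermitian-symmetric) coefficients $(g_F)^{\conj j j'}$, which are moreover constant since $g$ is translational invariant.

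First I would record the boundary step, which is the crux. Since $g$ is translational invariant the volume form $d\mu$ has constant coefficients, so the real and imaginary parts of the coordinate fields $\diff_{v^j}$ and $\diff_{\conj{v^{j'}}}$ are divergence-free; hence for any function $F$ smooth on $\cl K_c$ the integral $\int_{K_c}\diff_{\conj{v^{j'}}}F\,d\mu$ reduces by the divergence theorem to a boundary integral over $\bdry K_c$ whose integrand carries the component of $\diff_{\conj{v^{j'}}}$ along the conormal to $\{\varphi = c\}$. Because $\varphi = \norm{\Im u}^2$ does not involve $v$, one has $\diffb\varphi = \dbarb\varphi = 0$, so $\diff_{v^j}$ and $\diff_{\conj{v^{j'}}}$ are tangent to $\bdry K_c$ and all such boundary terms vanish. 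This is precisely what lets me integrate by parts freely in the base directions \emph{without imposing any boundary condition on $\zeta''$}.

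With this I would carry out two integrations by parts. Applying the $\diff_{\conj{v^{j'}}}$-compatibility to $\ptinner{\nabla_{v^j}\zeta''}{\zeta''}$ and integrating gives $\int_{K_c}\sum_{j,j'}(g_F)^{\conj j j'}\ptinner{\nabla_{\conj{v^{j'}}}\nabla_{v^j}\zeta''}{\zeta''} = -\norm{\nabla_v\zeta''}_{K_c,\chi}^2$; applying the $\diff_{v^j}$-compatibility to $\ptinner{\nabla_{\conj{v^{j'}}}\zeta''}{\zeta''}$ gives $\int_{K_c}\sum_{j,j'}(g_F)^{\conj j j'}\ptinner{\nabla_{v^j}\nabla_{\conj{v^{j'}}}\zeta''}{\zeta''} = -\norm{\nabla_{\conj v}\zeta''}_{K_c,\chi}^2$. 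I then commute the derivatives via $\commut{\nabla_{v^j}}{\nabla_{\conj{v^{j'}}}} = \Theta_{v^j\conj{v^{j'}}}$, the components of $\oTheta_{v\conj v}$; here the flatness of $(\Tgt{1,0},g)$, already noted in the excerpt, guarantees that no curvature of the form part enters and only the $L$-curvature of $(L,\eta e^{-\chiX})$ survives. Subtracting the two relations through $\nabla_{\conj{v^{j'}}}\nabla_{v^j} = \nabla_{v^j}\nabla_{\conj{v^{j'}}} - \Theta_{v^j\conj{v^{j'}}}$ and using $\sum_{j,j'}(g_F)^{\conj j j'}\Theta_{v^j\conj{v^{j'}}} = \Tr_g\oTheta_{v\conj v}$ together with $\ptinner{\zeta''}{\zeta''} = \abs{\zeta''}_{g,\eta,\chi}^2$ yields $-\norm{\nabla_v\zeta''}_{K_c,\chi}^2 = -\norm{\nabla_{\conj v}\zeta''}_{K_c,\chi}^2 - \int_{K_c}\paren{\Tr_g\oTheta_{v\conj v}}\abs{\zeta''}_{g,\eta,\chi}^2$, which rearranges to the asserted identity.

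The only routine bookkeeping is the handling of the constant coefficients $(g_F)^{\conj j j'}$ and the check that $\norm{\nabla_v\zeta''}^2$ and $\norm{\nabla_{\conj v}\zeta''}^2$ are contracted with the same coefficients (so the cross-index terms match under conjugation). The main obstacle, such as it is, is purely the justification that the boundary terms vanish, for which I rely entirely on $\diffb\varphi = \dbarb\varphi = 0$; it is exactly this cylindrical structure of $K_c$ in the base directions that makes the identity hold for every $\zeta'' \in \smfb 0q(\cl K_c;L)$ with no domain or boundary hypothesis.
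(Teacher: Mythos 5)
Your proof is correct and follows essentially the same route as the paper: integration by parts in the base directions using the metric-compatibility of $\nabla$, the commutator identity $\commut{\nabla_{v^j}}{\nabla_{\conj{v^{j'}}}} = \Theta_{v^j\conj{v^{j'}}}$ (with flatness of $(\Tgt{1,0},g)$ ensuring only the $L$-curvature survives), and the vanishing of the boundary contributions because $\diffb\varphi = \dbarb\varphi = 0$ on $\bdry K_c$. The paper packages the two exact terms as $d(Y\ctrct d\mu)$ and $d(W\ctrct d\mu)$ and applies Stokes once, whereas you perform two separate integrations by parts and subtract, but this is only a difference of bookkeeping.
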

\begin{proof}
  Recall that $d\mu := \frac{\omega^{\wedge n}}{n!}$ is the volume element
  on $K_c$, while that on $\bdry K_c$ is given by $d\sigma :=
  \parres{\frac{(d\varphi)^\dual}{\abs{d\varphi}_g} \ctrct
    d\mu}_{\bdry K_c}$.
  Einstein summation convention is applied in what follows.
  Fix a $\zeta'' \in \smfb 0q(\cl K_c;L)$.
  Let
  \begin{align*}
    Y &:= \ptinner{\nabla_{\conj{v^j}}
      \zeta''}{\zeta''}_{g,\eta,\chi} (g_F)^{\conj j j'}
    \frac{\diff}{\diff v^{j'}} &
    \text{and}  &&
    W &:= \ptinner{\nabla_{v^{j'}}
      \zeta''}{\zeta''}_{g,\eta,\chi} (g_F)^{\conj j j'}
    \frac{\diff}{\diff \conj{v^j}} 
  \end{align*}
  be two vector fields in $\smcFfb 0100(\cl K_c)$ and $\smcFfb
  0001(\cl K_c)$ respectively.
  Then, using the fact that $\nabla$ is compatible with
  $\ptinner\cdot\cdot_{g,\eta,\chi}$, it follows that
  \begin{align*}
      \abs{\nabla_{\conj v}\zeta''}_{g,\eta,\chi}^2 d\mu 
      = &\paren{\diff_{v^{j'}}
      \ptinner {(g_F)^{\conj j j'}
        \nabla_{\conj{v^j}}\zeta''}{\zeta''}_{g,\eta,\chi}} d\mu 
    - \ptinner{(g_F)^{\conj jj'} \nabla_{v^{j'}} \nabla_{\conj{v^j}}
        \zeta''}{\zeta''}_{g,\eta,\chi} d\mu \\
      = &~
      \begin{aligned}[t]
        d\paren{Y \ctrct d\mu} -&~\ptinner{(g_F)^{\conj jj'}
          \nabla_{\conj{v^j}} \nabla_{v^{j'}}
          \zeta''}{\zeta''}_{g,\eta,\chi} d\mu \\
        &- \ptinner{(g_F)^{\conj
            jj'} \commut{\nabla_{v^{j'}}}{\nabla_{\conj{v^j}}}
          \zeta''}{\zeta''}_{g,\eta,\chi} d\mu
      \end{aligned} \\
      = &~
      \begin{aligned}[t]
        d\paren{Y \ctrct d\mu} 
        -&~\paren{\diff_{\conj{v^j}} \ptinner{(g_F)^{\conj j j'}
          \nabla_{v^{j'}}\zeta''}{\zeta''}_{g,\eta,\chi}} d\mu 
        + \abs{\nabla_v\zeta''}_{g,\eta,\chi}^2 d\mu \\
        &- \paren{\Tr_g \oTheta_{v\conj v}}
        \abs{\zeta''}_{g,\eta,\chi}^2 d\mu
      \end{aligned} \\
      = &~d\paren{Y \ctrct d\mu} 
      - d\paren{W \ctrct d\mu} 
      + \abs{\nabla_v\zeta''}_{g,\eta,\chi}^2 d\mu 
      - \paren{\Tr_g \oTheta_{v\conj v}}
      \abs{\zeta''}_{g,\eta,\chi}^2 d\mu \; .
  \end{align*}
  Since $\bdry K_c = \set{\varphi = c}$ and $\parres{d\varphi}_{\bdry K_c} = 0$,
  it follows that for any vector field $V$ such that $\ptinner V
  {\frac{(d\varphi)^\dual}{\abs{d\varphi}_g}}_g = 0$, one has
  $\parres{V \ctrct d\mu}_{\bdry K_c} = 0$.
  The component of $Y-W$ in the direction of
  $\frac{(d\varphi)^\dual}{\abs{d\varphi}_g}$ is $\ptinner{Y -
    W}{\frac{(d\varphi)^\dual}{\abs{d\varphi}_g}}_g$.
  Therefore, by integrating over $K_c$ and applying Stokes' theorem,
  it yields
  \begin{equation*}
    \begin{aligned}
      \norm{\nabla_{\conj v}\zeta''}_{K_c,\chi}^2 = \int_{\bdry K_c}
      \ptinner{Y - W}{\frac{(d\varphi)^\dual}{\abs{d\varphi}_g}}_g
      d\sigma 
      + \norm{\nabla_v\zeta''}_{K_c,\chi}^2 - \int_{K_c}
      \paren{\Tr_g \oTheta_{v\conj v}} \abs{\zeta''}_{g,\eta,\chi}^2
      d\mu \; .
    \end{aligned}
  \end{equation*}
  But $(d\varphi)^\dual \in \smcFfb 1000 \oplus \smcFfb 0010 (X)$, so
  $\ptinner{Y - W}{\frac{(d\varphi)^\dual}{\abs{d\varphi}_g}}_g = 0$
  and hence the lemma.
\end{proof}

Combining the result above with (\ref{eq:Bochner-Kodaira-01}) yields
\begin{equation} \label{eq:Bochner-Kodaira-10}
  \begin{aligned}
    \norm{S_q\zeta}_3^2 + \norm{T_{q-1}^*\zeta}_1^2 
    = &~\Bd(\zeta,\zeta) 
    + \norm{\dbarf\zeta''}_{3}^2 + \norm{\dbarb\zeta'}_{3}^2 \\
    &+ \norm{\nabla_{\conj u}\zeta'}_{K_c,\chi}^2 
    + \norm{\nabla_{v}\zeta''}_{K_c,\chi}^2
    -  \int_{K_c} \paren{\Tr_g\oTheta_{v\conj v}}
    \abs{\zeta''}_{g,\eta,\chi}^2 \\
    &+ \int_{K_c} e^{-\chiX} \Tr_{g,\eta} \pr_F \paren{\oTheta^\dual(\zeta \otimes
    \conj \zeta)} \; .
  \end{aligned}
\end{equation}
This formula is analogous to the usual $\nabla$-Bochner--Kodaira
formula (see \cite{Siu}*{(2.2.1)}). However, it contains term involving
$\nabla_{\conj u}$ and $\nabla_v$ but not $\nabla_u$, and the boundary
term is the same as the one in the $\conj\nabla$-Bochner--Kodaira
formula.

Consider the boundary term $\Bd(\zeta,\zeta)$ in (\ref{eq:boundary-term}). 
Since $\cplxi \diff\dbar \varphi$ is non-negative on $\bdry K_c$,
i.e.~$K_c$ is pseudoconvex, by choosing coordinates at any point in
$\bdry K_c$ such that $\iddbar\varphi$ and $g$ are simultaneously
diagonalized, one sees that $\Bd(\zeta,\zeta)$ is non-negative for all
$\zeta \in \smFBCtwo$.
Noting that all other norm-square terms are also
non-negative, one then obtains 
\begin{equation} \label{eq:Bochner-Kodaira_ineq}
  \norm{S_q\zeta}_3^2 + \norm{T_{q-1}^*\zeta}_1^2 
  \geq 
  \int_{K_c} e^{-\chiX} \Tr_{g,\eta} \pr_F \paren{\oTheta^\dual(\zeta \otimes
  \conj \zeta)}
\end{equation}
and
\begin{equation} \label{eq:Bochner-Kodaira_ineq2}
  \begin{aligned}
    \norm{S_q\zeta}_3^2 + \norm{T_{q-1}^*\zeta}_1^2 
    \geq 
    &- \int_{K_c} \paren{\Tr_g\oTheta_{v\conj v}}
    \abs{\zeta''}_{g,\eta,\chi}^2 \\
    &+ \int_{K_c} e^{-\chiX} \Tr_{g,\eta} \pr_F \paren{\oTheta^\dual(\zeta \otimes
    \conj \zeta)}
  \end{aligned}
\end{equation}
for all $\zeta \in \smFBCtwo \cap \Dom_{K_c,\chi} T_{q-1}^*$.
These are the \emph{Bochner--Kodaira inequalities for $T_{q-1}^*$ and
  $S_q$} which are used to obtain the required $L^2$ estimates.

\subsection{Murakami's trick}
\label{sec:curv-terms-murakami-trick}

From (\ref{eq:curvature-tensor}) and $\log\paren{\eta e^{-\chiX}} =
\log\etat + \log\etaw - \chiX$ (see \S \ref{sec:metric-on-L} for
notation), it follows that
\begin{equation} \label{eq:Theta}
  \begin{aligned}
    \Theta 
    &= \Theta_\tCurv + \Theta_\wCurv + \iddbar\chiX \\ 
    &= \pi \iddbar \Hform + 2\iddbar\Re
      \hbar_\delta + \iddbar\chiX
  \end{aligned}
\end{equation}
where $\Theta_\tCurv$ and $\Theta_\wCurv$ are respectively the tame and wild
curvature forms of $L$ defined in \S \ref{sec:metric-on-L},
and $\Hform$ is a hermitian form on $\fieldC^n \times \fieldC^n$ associated to $L$.
Therefore, by abusing $\Hform$ to mean the associated hermitian form
in $\smform{1,0} \otimes \smform{0,1}(X)$, the curvature integral $\int_{K_c} e^{-\chiX} \Tr_{g,\eta}
\pr_F \paren{\oTheta^\dual (\zeta \otimes \conj\zeta)}$ in
(\ref{eq:Bochner-Kodaira_ineq}) and (\ref{eq:Bochner-Kodaira_ineq2})
can be split into the sum of
\begin{equation}
  \begin{aligned}
    \tCurv(\zeta, \zeta) &:= \pi \int_{K_c} e^{-\chiX} \Tr_{g,\eta}
    \pr_F
    \paren{\Hform^\dual (\zeta \otimes \conj\zeta)} \; , \\
    \label{eq:wild-curv-integral}
    \wCurv(\zeta, \zeta) &:= \int_{K_c} e^{-\chiX} \Tr_{g,\eta} \pr_F
    \paren{\paren{2~\ddbar\Re \hbar_\delta}^\dual (\zeta \otimes
    \conj\zeta)} \; , \\
    \wtCurv(\zeta, \zeta) &:= \int_{K_c} e^{-\chiX} \Tr_{g,\eta} \pr_F
    \paren{\paren{\ddbar\chiX}^\dual (\zeta \otimes \conj\zeta)} 
  \end{aligned}
\end{equation}%
\showsym[tCurv]{$\tCurv(\zeta, \zeta)$, $\wCurv(\zeta,\zeta)$,
  $\wtCurv(\zeta, \zeta)$}{curvature integrals defined in
  (\ref{eq:wild-curv-integral})}%
(recall that $\smform{1,1}$ is identified with $\smform{1,0} \otimes
\smform{0,1}$ via $dz^k \wedge d\conj{z^\ell} \mapsto dz^k \otimes
d\conj{z^\ell}$ for all $1 \leq k, \ell \leq n$).

One of the essential ingredients for obtaining the required $L^2$
estimates for
$q < s_F^-$ or $q > m - s_F^+$ is Murakami's trick used in
\cite{Murakami}.
The trick is applied to the part of the curvature integral
$\tCurv(\zeta,\zeta)$ involving $\Hform_F := \Hform|_{F\times
  F}$\showsym[HF]{$\Hform_F$}{$:= \Hform"|"_{F\times F}$}.
For any $\zeta = \zeta' + \zeta'' \in \smFBCtwo$ where $\zeta' \in
\smfb 1{q-1}(\cl K_c;L)$ and $\zeta'' \in \smfb 0q (\cl K_c;L)$, that
part is given by
\begin{equation*}
  \begin{aligned}
    \tCurv_F(\zeta,\zeta) 
    &:= \pi \int_{K_c} e^{-\chiX} \Tr_{g,\eta} \pr_F
    \paren{\Hform_F^\dual(\zeta \otimes \conj\zeta)} \\
    &= \pi \int_{K_c} e^{-\chiX} \Tr_{g,\eta} \Hform_F^\dual(\zeta''
    \otimes \conj{\zeta''}) 
    = \tCurv_F(\zeta'',\zeta'') \; .
  \end{aligned}
\end{equation*}%
\showsym[tCurvF]{$\tCurv_F(\zeta,\zeta)$}{a summand of $\tCurv(\zeta,\zeta)$
  corresponding to $\Hform_F$}%


\begin{definition}
  An $\Hform$-apt coordinate system is an apt coordinate system such
  that the matrix of $\Hform_F$ under such coordinate system is given
  by 
 \begin{equation} \label{eq:diag-H_F}
   H_F = D := \Diag{\underbrace{1,\dots,1}_{s_F^+},
     \underbrace{-1,\dots,-1}_{s_F^-}, \underbrace{0,\dots,0}_{m -
       s_F^+ - s_F^-}} \; .
 \end{equation}
\end{definition}
Under a chosen apt coordinate system, an $\Hform$-apt coordinate
system can be obtained by a linear change of coordinates only in the
variable $v$ (which preserves the decomposition
(\ref{eq:cotangent-split})).

In what follows, write $d\conj{v^{J_q}}:= d\conj{v^{j_1}}\wedge \dots
\wedge d\conj{v^{j_q}}$\showsym[dv]{$d\conj{v^{J_q}}$}{$:=
  d\conj{v^{j_1}}\wedge \dots \wedge d\conj{v^{j_q}}$} for every
$q$-multiindex $J_q = (j_1,\dotsc,j_q)$.
Moreover, let $\zeta''_{\conj J_{q}}$ be the component of $\zeta'' \in
\smfb 0{q}(L)$ corresponding to $d\conj{v^{J_{q}}}$, and
$\zeta'_{\conj i \conj J_{q-1}}$ the component of $\zeta' \in \smfb
1{q-1}(L)$ corresponding to $d\conj{u^i} \wedge d\conj{v^{J_{q-1}}}$.

\begin{lemma}[Murakami's trick for $q > m - s_F^+$] \label{lem:murakami-trick}
  For any $q > m - s_F^+$ and given any constant $M > 0$, one can
  choose the translational invariant hermitian metric $g$ suitably such that $\tCurv_F(\zeta'', \zeta'')
  \geq \pi  M \norm{\zeta''}_2^2$ for every $\zeta'' \in \smfb 0q(\cl K_c;L)$.
\end{lemma}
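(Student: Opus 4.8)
The plan is to exploit the complete freedom in the base metric $g_F := \pr_F g$: since $g$ is only constrained to be translational invariant with $\Tgtf{1,0}\perp\Tgtb{1,0}$, I may prescribe $g_F$ to be any constant positive-definite hermitian form in the base variables and leave the fibre part untouched. First I would pass to an $\Hform$-apt coordinate system, so that the matrix of $\Hform_F$ equals the diagonal matrix $D$ of (\ref{eq:diag-H_F}); recall this costs only a linear change in the variable $v$, which respects (\ref{eq:cotangent-split}) and the admissible class of metrics. I would then set $g_F := \Diag{t_1,\dots,t_m}$ in these $v$-coordinates, with positive constants $t_j$ to be fixed at the end. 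This alteration changes neither $\eta$, nor $\chiX = \chiR\circ\varphi$, nor the exhausting sets $K_c$, since $\varphi = \norm{\Im u}^2$ is built from the fixed Euclidean norm.

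With $\Hform_F$ and $g_F$ simultaneously diagonal, the eigenvalue of $\Hform_F$ relative to $g_F$ in the $j$-th base direction is $\lambda_j := D_{jj}/t_j$, where $D_{jj}$ is the $j$-th diagonal entry of $D$. The pointwise hermitian form $\Tr_{g,\eta}\Hform_F^\dual(\zeta''\otimes\conj{\zeta''})$ is then diagonalized by $\set{d\conj{v^{J_q}}}$: the standard Bochner--Kodaira computation (the curvature operator acts on each monomial $d\conj{v^{J_q}}$ by the sum of the eigenvalues of the occupied directions) shows that $\zeta''_{\conj J_q}\,d\conj{v^{J_q}}$ is an eigenvector with eigenvalue $\sum_{j\in J_q}\lambda_j$. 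As each summand carries the common non-negative weight $\abs{\zeta''_{\conj J_q}}_\eta^2\,\abs{d\conj{v^{J_q}}}_g^2$, this yields the pointwise bound
\begin{equation*}
  \Tr_{g,\eta}\Hform_F^\dual(\zeta''\otimes\conj{\zeta''})
  \geq \Bigl(\min_{\abs{J_q}=q}\ \sum_{j\in J_q}\lambda_j\Bigr)\,\abs{\zeta''}_{g,\eta}^2 \; ,
\end{equation*}
the minimum running over all $q$-element index sets $J_q$.

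It remains to choose the $t_j$ so that each $\sum_{j\in J_q}\lambda_j$ is at least $M$. Writing $P := \set{1,\dots,s_F^+}$ and $N := \set{s_F^+ + 1,\dots,s_F^+ + s_F^-}$ for the positive and negative directions of $D$, the hypothesis $q > m - s_F^+ = \abs{P^c}$ forces every $q$-set $J_q$ to meet $P$, i.e.~$\abs{J_q\cap P}\geq 1$. I would then set $t_j := \epsilon$ for $j\in P$ and $t_j := 1$ otherwise, with $0 < \epsilon \leq (M+s_F^-)^{-1}$, so that $\lambda_j = \epsilon^{-1}$ on $P$, $\lambda_j = -1$ on $N$, and $\lambda_j = 0$ elsewhere; hence
\begin{equation*}
  \sum_{j\in J_q}\lambda_j = \frac{\abs{J_q\cap P}}{\epsilon} - \abs{J_q\cap N}
  \geq \frac{1}{\epsilon} - s_F^- \geq M \; ,
\end{equation*}
using $\abs{J_q\cap N}\leq s_F^-$. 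Multiplying the pointwise estimate by $\pi e^{-\chiX}$ and integrating over $K_c$ then gives $\tCurv_F(\zeta'',\zeta'')\geq\pi M\norm{\zeta''}_2^2$, as required. The only genuinely delicate point is the combinatorial observation that $q > m - s_F^+$ is exactly the condition guaranteeing that every $(0,q)$-component sees at least one positive direction; once that is in hand, the anisotropic scaling $t_j\to 0$ on $P$ drives the positive eigenvalues to $+\infty$ while the negative ones remain bounded by $s_F^-$, so the prescribed lower bound $M$ is attained uniformly.
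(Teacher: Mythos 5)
Your proposal is correct and follows essentially the same route as the paper's proof: pass to an $\Hform$-apt coordinate system, diagonalize $g_F$ with an anisotropic scaling that makes the relative eigenvalues equal to $M+s_F^-$ on the positive block, $-1$ on the negative block and $0$ elsewhere (your $\epsilon = (M+s_F^-)^{-1}$ is exactly the paper's $1/g^F_+$), reduce to monomials $\zeta''_{\conj J_q}\,d\conj{v^{J_q}}$ by orthogonality, and use the combinatorial fact that $q > m - s_F^+$ forces every $J_q$ to meet the positive block while $\abs{J_q\cap N}\leq s_F^-$. No gaps.
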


\begin{proof}
  Fix an $\Hform$-apt coordinate system.
  Choose $g$ such that it is diagonal in the chosen $\Hform$-apt
  coordinates and its matrix is given by
  \begin{equation} \label{eq:choice-of-g}
    \Diag{\underbrace{1,\dots,1}_{n-m}, \underbrace{\frac{1}{g^F_+},\dots,\frac{1}{g^F_+}}_{s_F^+},
     \underbrace{\frac{1}{g^F_-},\dots,\frac{1}{g^F_-}}_{s_F^-},
     \underbrace{\frac{1}{g^F_0},\dots,\frac{1}{g^F_0}}_{m - s_F^+ -
       s_F^-}} \; ,
  \end{equation}
  where $g^F_+$, $g^F_-$ and $g^F_0$ are positive numbers.
  Given $M > 0$, $g^F_+$, $g^F_-$ and $g^F_0$ are chosen as
  \begin{equation*}
    g^F_+ := s_F^- + M \; , \quad g^F_- := 1  \quad\text{ and }\quad
    g^F_0 := 1 \; .
  \end{equation*}

  Under the chosen $\Hform$-apt coordinates, since $\Hform_F$ and $g$
  are both diagonal, the monomial forms $\zeta''_{\conj J_q}
  d\conj{v^{J_q}} \in \smfb 0q(\cl K_c;L)$ with different multiindices
  $J_q$ are orthogonal to one another with respect to $\tCurv_F$ and
  $\inner\cdot\cdot_{2}$.
  Therefore, it suffices to show that
  \begin{equation} \label{eq:tCurv_F-bbd-monomial} \tag{$*$}
    \tCurv_F \paren{\zeta''_{\conj J_q} d\conj{v^{J_q}}, \zeta''_{\conj J_q}
    d\conj{v^{J_q}}} \geq \pi  M \norm{\zeta''_{\conj J_q}
      d\conj{v^{J_q}}}_2^2
  \end{equation}
  for all monomial forms $\zeta''_{\conj J_q} d\conj{v^{J_q}} \in \smfb 0q
  (\cl K_c;L)$.
  
  In fact, for each multiindex $J_q = (j_1,\dots,j_q)$, one has
  \begin{equation*}
    \begin{aligned}
      \tCurv_F\paren{\zeta''_{\conj J_q} d\conj{v^{J_q}},
        \zeta''_{\conj J_q} d\conj{v^{J_q}}} 
      &= \pi \int_{K_c} \paren{\sum_{\nu = 1}^q (g_F)^{\conj j_\nu j_\nu}
        (H_F)_{j_\nu \conj j_\nu}} \abs{\zeta''_{\conj J_q}
        d\conj{v^{J_q}}}_{g,\eta,\chi}^2 \\
      &= \pi \paren{\sum_{\nu = 1}^q (g_F)^{\conj j_\nu j_\nu}
        (H_F)_{j_\nu \conj j_\nu}} \norm{\zeta''_{\conj J_q}
        d\conj{v^{J_q}}}_2^2  \; ,
    \end{aligned}
  \end{equation*}
  where $(g_F)^{\conj j_\nu j_\nu}$'s are the diagonal components of
  $(g_F)^{-1} := (\pr_F g)^{-1}$, and
  $(H_F)_{j_\nu \conj j_\nu}$'s are the diagonal entries of $H_F$ in
  (\ref{eq:diag-H_F}), which are either $1$, $-1$ or $0$.
  Define
  \begin{gather*}
    R^+(J_q) := \#\set{j_\nu\in J_q\: : \: 1 \leq j_\nu \leq s_F^+} \\
    R^-(J_q) := \#\set{j_\nu\in J_q\: : \: s_F^+ + 1 \leq j_\nu \leq
      s_F^+ + s_F^-} \; .
  \end{gather*}
  Then, the sum in the parenthesis becomes
  \begin{equation} \label{eq:tame-Curv-01}
    \begin{aligned}
      \sum_{\nu = 1}^q (g_F)^{\conj j_\nu j_\nu} (H_F)_{j_\nu \conj j_\nu} 
      &= g^F_+ R^+(J_q) - g^F_- R^-(J_q) \\
      &= (s_F^- + M) R^+(J_q) - R^-(J_q) \; .
    \end{aligned}
  \end{equation}
  Since $q > m - s_F^+$, it follows that $R^+(J_q) \geq 1$ for any
  multiindex $J_q$. 
  Note also that $R^-(J_q) \leq s_F^-$ for any $J_q$. 
  Therefore, by the choice of $g^F_+$ and $g^F_-$,
  one obtains $g^F_+ R^+(J_q) - g^F_- R^-(J_q) \geq M$ and thus
  (\ref{eq:tCurv_F-bbd-monomial}) follows.
  This completes the proof.
\end{proof}


In order to apply Lemma \ref{lem:murakami-trick} to
(\ref{eq:Bochner-Kodaira_ineq}), 
note
that for any $\zeta'' \in \smbform{0,q}(\cl K_c;L)$, one has
\begin{equation*}
  \norm{\zeta''}_2^2  
  = 
  \int_{K_c} e^{-\chiX} \Tr_{g,\eta}
  \zeta''\otimes \conj{\zeta''} \; .
\end{equation*}
Decompose $\Hform \in \smform{1,0} \otimes \smform{0,1}(X)$ into $\Hform_E +
\Hform_{u\conj v} + \Hform_{v\conj u} + \Hform_F$\showsym[HFE]{$\Hform_E +
\Hform_{u\conj v} + \Hform_{v\conj u} + \Hform_F$}{decomposition
of $\Hform$ with respect to \\ the decomposition (\ref{eq:cotangent-split})} according
to the decomposition (\ref{eq:cotangent-split}) as is done to $\oTheta$
(write $\Hform_E$ for $\Hform_{u\conj u}$ and $\Hform_F$ for
$\Hform_{v\conj v}$ to respect previous notations).
Now note that, for any $\zeta = \zeta' + \zeta'' \in \smFBCtwo$ such
that $\zeta' \in \smfb 1{q-1}(\cl K_c;L)$ and $\zeta'' \in \smfb 0q
(\cl K_c;L)$, one has
\begin{equation*}
  \begin{aligned}
    \tCurv(\zeta,\zeta) = \pi \int_{K_c} e^{-\chiX} \Tr_{g,\eta}
    &\left(\Hform_E^\dual (\zeta' \otimes \conj{\zeta'}) +
      \Hform_{u\conj v}^\dual(\zeta' \otimes \conj{\zeta''}) \right. \\
    &\left.~+~\Hform_{v\conj u}^\dual(\zeta'' \otimes \conj{\zeta'})
      +\Hform_F^\dual (\zeta'' \otimes \conj{\zeta''}) \right) \; .
    \end{aligned}
\end{equation*}
If $q > m - s_F^+$, Lemma \ref{lem:murakami-trick} then implies that,
given $M > 0$, $g$ can be chosen such that
\begin{equation*}
  \tCurv(\zeta,\zeta) \geq
  \begin{aligned}[t]
    \pi \int_{K_c} e^{-\chiX} &\left[\Tr_{g,\eta} \paren{\Hform_E^\dual
        (\zeta' \otimes \conj{\zeta'}) +
        \Hform_{u\conj v}^\dual(\zeta' \otimes \conj{\zeta''})
        + \Hform_{v\conj u}^\dual(\zeta'' \otimes
        \conj{\zeta'})}\right. \\ 
    + &~M \left. \Tr_{g,\eta} \zeta'' \otimes \conj{\zeta''} \right]
    \; .
  \end{aligned}
\end{equation*}
Define $\altH(M)$ to be an element in $\smform{0,0} \oplus
\paren{\smform{1,0} \otimes \smform{0,1}}(X)$ 
such that 
\begin{equation} \label{eq:def-altH}
  \begin{aligned}
    \pr_F \paren{\paren{\altH(M)}^\dual (\zeta \otimes \conj\zeta)} =
      &~\Hform_E^\dual
        (\zeta' \otimes \conj{\zeta'}) +
        \Hform_{u\conj v}^\dual(\zeta' \otimes \conj{\zeta''}) \\
      &~+ \Hform_{v\conj u}^\dual(\zeta'' \otimes \conj{\zeta'})
        + M~ \zeta'' \otimes \conj{\zeta''}
    \end{aligned}
\end{equation}
for any $\zeta = \zeta' + \zeta'' \in \smFBCtwo$ (note that $\zeta' =
0$ when $q = 0$).
Then one has
\begin{equation*}
  \tCurv(\zeta,\zeta)
  \geq
  \pi \int_{K_c} e^{-\chiX} \Tr_{g,\eta} \pr_F \paren{\paren{\altH(M)}^\dual
  (\zeta \otimes \conj\zeta)}
\end{equation*}
when $q > m - s_F^+$. 
Therefore, the consequence of Lemma \ref{lem:murakami-trick} applied to
(\ref{eq:Bochner-Kodaira_ineq}) can be stated as follows.
\begin{cor} \label{cor:BK-ineq-with-Murakami}
  Suppose $q > m - s_F^+$. Then, given any constant $M > 0$, the
  translational invariant hermitian metric $g$ can be chosen suitably such that
  (\ref{eq:Bochner-Kodaira_ineq}) yields
  \begin{equation*}
    \begin{aligned}
      \norm{S_q\zeta}_3^2 + \norm{T_{q-1}^*\zeta}_1^2 \geq 
      &~\pi \int_{K_c} e^{-\chiX} \Tr_{g,\eta} \pr_F \paren{\paren{\altH(M)}^\dual
      (\zeta \otimes \conj\zeta)}
      \\
      &+ \wCurv(\zeta,\zeta) + \wtCurv(\zeta,\zeta)
    \end{aligned}
  \end{equation*}
  for all $\zeta \in \smFBCtwo \cap \Dom T_{q-1}^*$.
\end{cor}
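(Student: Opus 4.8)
The plan is to assemble three ingredients that are already in place: the Bochner--Kodaira inequality (\ref{eq:Bochner-Kodaira_ineq}), the splitting (\ref{eq:Theta}) of the curvature of $L$ into its tame, wild and weight contributions, and Murakami's trick (Lemma \ref{lem:murakami-trick}). No genuinely new estimate is needed here; the corollary is the bookkeeping that records the effect of the trick on the right-hand side of (\ref{eq:Bochner-Kodaira_ineq}).

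First I would start from (\ref{eq:Bochner-Kodaira_ineq}), which is valid for every translational invariant metric $g$ making the decomposition (\ref{eq:cotangent-split}) orthogonal, and use (\ref{eq:Theta}) to rewrite the curvature integral on its right-hand side as the sum $\tCurv(\zeta,\zeta) + \wCurv(\zeta,\zeta) + \wtCurv(\zeta,\zeta)$ introduced in (\ref{eq:wild-curv-integral}). It then remains only to bound $\tCurv(\zeta,\zeta)$ from below, since $\wCurv$ and $\wtCurv$ are carried over unchanged.

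Next I would expand $\tCurv(\zeta,\zeta)$ by writing $\zeta = \zeta' + \zeta''$ and decomposing $\Hform$ along (\ref{eq:cotangent-split}) into $\Hform_E + \Hform_{u\conj v} + \Hform_{v\conj u} + \Hform_F$, so that $\tCurv(\zeta,\zeta)$ becomes the integral of the four corresponding contractions. The only term to be controlled is the $\Hform_F$-part, namely $\tCurv_F(\zeta'',\zeta'')$, and this is exactly where the hypothesis $q > m - s_F^+$ enters: for the prescribed $M > 0$, Lemma \ref{lem:murakami-trick} supplies a translational invariant metric $g$ --- the diagonal metric (\ref{eq:choice-of-g}) in an $\Hform$-apt coordinate system --- with $\tCurv_F(\zeta'',\zeta'') \geq \pi M \norm{\zeta''}_2^2$ for all $\zeta'' \in \smfb 0q(\cl K_c;L)$. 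I would note that this $g$ is admissible for (\ref{eq:Bochner-Kodaira_ineq}): being diagonal and constant in apt coordinates, it is translational invariant and block-diagonal with respect to the fibre/base split, hence keeps (\ref{eq:cotangent-split}) orthogonal.

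Finally, rewriting $\pi M \norm{\zeta''}_2^2$ as $\pi \int_{K_c} e^{-\chiX} \Tr_{g,\eta}(M\, \zeta'' \otimes \conj{\zeta''})$ and comparing the resulting four summands with the definition (\ref{eq:def-altH}) of $\altH(M)$, the lower bound collapses exactly into $\tCurv(\zeta,\zeta) \geq \pi \int_{K_c} e^{-\chiX} \Tr_{g,\eta} \pr_F\paren{\paren{\altH(M)}^\dual (\zeta \otimes \conj\zeta)}$. Substituting this into the split form of (\ref{eq:Bochner-Kodaira_ineq}) yields the asserted inequality. The substantive content lies entirely in Lemma \ref{lem:murakami-trick}; the only points requiring care here are the verification that the metric chosen by the trick is still a legal choice for the Bochner--Kodaira inequality and that the estimate holds for all admissible $\zeta \in \smFBCtwo \cap \Dom T_{q-1}^*$ rather than for monomial forms alone --- both of which are already built into the statement of the lemma.
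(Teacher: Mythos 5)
Your proposal is correct and follows essentially the same route as the paper: starting from (\ref{eq:Bochner-Kodaira_ineq}), splitting the curvature integral via (\ref{eq:Theta}) into $\tCurv+\wCurv+\wtCurv$, expanding $\tCurv$ along the decomposition $\Hform = \Hform_E + \Hform_{u\conj v} + \Hform_{v\conj u} + \Hform_F$, invoking Lemma \ref{lem:murakami-trick} to replace the $\Hform_F$-term by $\pi M\norm{\zeta''}_2^2$, and repackaging via the definition (\ref{eq:def-altH}) of $\altH(M)$. The paper carries out exactly this bookkeeping in the text preceding the corollary, so nothing further is needed.
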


Now consider the integral involving $\Tr_g \oTheta_{v\conj v}$ in
(\ref{eq:Bochner-Kodaira_ineq2}). 
Note that
\begin{equation*}
  \pr_F \Theta = \pi \iddbarb \Hform + 2
  \iddbarb\paren{\Re \hbar_\delta} \; .
\end{equation*}
Here no term involving $\chiX$ appears since 
$\iddbarb\chiX = 0$. 
 Again, by abusing $\Hform_F$ to mean the associated hermitian form in
 $\smform{1,0} \otimes \smform{0,1}(X)$, 
the curvature integral $- \int_{K_c}
\paren{\Tr_g\oTheta_{v\conj v}} \abs{\zeta''}_{g,\eta,\chi}^2$ in
(\ref{eq:Bochner-Kodaira_ineq2}) can be split into the sum of
\begin{equation}
  \begin{aligned}
    \tCurv_F'(\zeta'',\zeta'') &:= - \pi \int_{K_c}
    \paren{\Tr_g\Hform_F}
    \abs{\zeta''}_{g,\eta,\chi}^2 \; , \\
    \label{eq:wild-trace-integral}
    \wCurv'_F(\zeta'',\zeta'') &:= - \int_{K_c}
    \paren{2 \Tr_g \ddbarb\Re \hbar_\delta}
    \abs{\zeta''}_{g,\eta,\chi}^2 \; .
  \end{aligned}
\end{equation}%
\showsym[tCurvF]{$\tCurv_F'(\zeta'',\zeta'')$,
  $\wCurv'_F(\zeta'',\zeta'')$}{curvature integrals defined in
  (\ref{eq:wild-trace-integral})}%
Similar argument as in the proof of Lemma \ref{lem:murakami-trick} yields
\begin{lemma}[Murakami's trick for $q < s_F^-$] \label{lem:Murakami-trick2}
  Suppose that $q < s_F^-$. Then for any given constant $M > 0$, one
  can choose the translational invariant hermitian metric $g$ suitably such that
  \begin{equation*}
    \tCurv_F'(\zeta'', \zeta'')
    + \tCurv_F(\zeta'',\zeta'') \geq \pi  M
    \norm{\zeta''}_2^2 
  \end{equation*}
  for all $\zeta'' \in \smbform{0,q}(\cl K_c;L)$. 
\end{lemma}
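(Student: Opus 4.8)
The plan is to follow the proof of Lemma \ref{lem:murakami-trick} step for step, using the extra negative summand $\tCurv_F'$ to absorb the sign change that occurs in the regime $q < s_F^-$. First I would fix an $\Hform$-apt coordinate system, so that the matrix of $\Hform_F$ is the diagonal matrix $H_F = D$ of (\ref{eq:diag-H_F}), and choose the translational invariant metric $g$ to be diagonal in these coordinates of the shape (\ref{eq:choice-of-g}); given $M > 0$ I would now set
\[
  g^F_+ := 1 \; , \quad g^F_- := s_F^+ + M \quad\text{ and }\quad g^F_0 := 1 \; .
\]
Since $\Hform_F$ and $g$ are both diagonal, the monomials $\zeta''_{\conj J_q} d\conj{v^{J_q}}$ with distinct multiindices $J_q$ are mutually orthogonal with respect to $\tCurv_F$ and $\inner\cdot\cdot_2$ exactly as in Lemma \ref{lem:murakami-trick}, and they are trivially orthogonal with respect to $\tCurv_F'$ because the latter is a scalar multiple of $\inner\cdot\cdot_2$ (see below). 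Hence it suffices to establish the estimate on a single monomial.

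The two curvature integrals are then evaluated on a monomial. The value of $\tCurv_F$ is already recorded in the proof of Lemma \ref{lem:murakami-trick}:
\[
  \tCurv_F\paren{\zeta''_{\conj J_q} d\conj{v^{J_q}}, \zeta''_{\conj J_q} d\conj{v^{J_q}}} = \pi \paren{g^F_+ R^+(J_q) - g^F_- R^-(J_q)} \norm{\zeta''_{\conj J_q} d\conj{v^{J_q}}}_2^2 \; .
\]
For $\tCurv_F'$, the point is that $\Tr_g \Hform_F = s_F^+ g^F_+ - s_F^- g^F_-$ is a position-independent constant, being the full $g$-trace of $\Hform_F$ over the base directions; consequently $\tCurv_F'(\zeta'', \zeta'') = -\pi \paren{s_F^+ g^F_+ - s_F^- g^F_-} \norm{\zeta''}_2^2$ for every $\zeta''$. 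Adding the two contributions on a monomial gives
\[
  \paren{\tCurv_F' + \tCurv_F}\paren{\zeta''_{\conj J_q} d\conj{v^{J_q}}, \zeta''_{\conj J_q} d\conj{v^{J_q}}} = \pi \paren{g^F_+\paren{R^+(J_q) - s_F^+} + g^F_-\paren{s_F^- - R^-(J_q)}} \norm{\zeta''_{\conj J_q} d\conj{v^{J_q}}}_2^2 \; .
\]

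Finally I would invoke the hypothesis $q < s_F^-$. Since $R^-(J_q) \leq q < s_F^-$, one has $s_F^- - R^-(J_q) \geq 1$, while $s_F^+ - R^+(J_q) \leq s_F^+$ for every $J_q$; so with the chosen values $g^F_+ = 1$ and $g^F_- = s_F^+ + M$ the parenthesised coefficient satisfies
\[
  -\paren{s_F^+ - R^+(J_q)} + \paren{s_F^+ + M}\paren{s_F^- - R^-(J_q)} \geq -s_F^+ + \paren{s_F^+ + M} = M \; .
\]
This proves the desired bound on each monomial, and summing over $J_q$ yields $\tCurv_F'(\zeta'',\zeta'') + \tCurv_F(\zeta'',\zeta'') \geq \pi M \norm{\zeta''}_2^2$ for all $\zeta'' \in \smbform{0,q}(\cl K_c;L)$. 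The only genuine subtlety is the treatment of $\tCurv_F'$: one must recognise that $\Tr_g \Hform_F$ is a scalar independent of both the point of $K_c$ and the multiindex of $\zeta''$, so that $\tCurv_F'$ acts as a constant multiple of $\norm\cdot_2^2$; the remaining combinatorics of $R^+(J_q)$ and $R^-(J_q)$ is identical to Lemma \ref{lem:murakami-trick}, with the inequality $s_F^- - R^-(J_q) \geq 1$ (valid precisely because $q < s_F^-$) replacing the inequality $R^+(J_q) \geq 1$ used there.
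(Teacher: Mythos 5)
Your proposal is correct and follows essentially the same route as the paper's own proof: the same choice $g^F_+ = 1$, $g^F_- = s_F^+ + M$, $g^F_0 = 1$ in an $\Hform$-apt coordinate system, the same observation that $\Tr_g \Hform_F = g^F_+ s_F^+ - g^F_- s_F^-$ is a constant so that $\tCurv_F'$ is a scalar multiple of $\norm\cdot_2^2$, the same reduction to monomials, and the same final inequality $g^F_-\paren{s_F^- - R^-(J_q)} - g^F_+\paren{s_F^+ - R^+(J_q)} \geq M$ driven by $s_F^- - R^-(J_q) \geq 1$.
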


\begin{proof}
  Fix an $\Hform$-apt coordinate system.
  Choose $g$ as in (\ref{eq:choice-of-g}).
  Given $M > 0$, $g^F_+$, $g^F_-$ and $g^F_0$ are chosen as
  \begin{equation*}
    g^F_+ := 1 \; , \quad g^F_- := s_F^+ + M  \quad\text{ and }\quad
    g^F_0 := 1 \; .
  \end{equation*}

  Using the $\Hform$-apt coordinates, one sees that
  $\Tr_g \Hform_F = g^F_+ s_F^+ - g^F_- s_F^-$ and therefore
  \begin{equation*}
    \tCurv_F'(\zeta'',\zeta'') = \pi \paren{g^F_- s_F^- - g^F_+ s_F^+}
    \norm{\zeta''}_2^2 \; .
  \end{equation*}

  Again, since $\Hform_F$ and $g$ are both diagonal under the chosen
  $\Hform$-apt coordinates, the monomial forms $\zeta''_{\conj J_q}
  d\conj{v^{J_q}} \in \smfb 0q(\cl K_c;L)$ with different multiindices
  $J_q$ are orthogonal to one another with respect to $\tCurv_F$ and
  $\inner\cdot\cdot_{2}$.
  Therefore, it suffices to show that
  \begin{equation} \label{eq:tr-tCurv_F-bbd-monomial} \tag{$**$}
    \begin{aligned}
      \pi \paren{g^F_- s_F^- - g^F_+ s_F^+} \norm{\zeta''_{\conj J_q}
        d\conj{v^{J_q}}}_2^2 + \tCurv_F \paren{\zeta''_{\conj J_q}
        d\conj{v^{J_q}}, \zeta''_{\conj J_q} d\conj{v^{J_q}}} 
      \geq \pi M \norm{\zeta''_{\conj J_q} d\conj{v^{J_q}}}_2^2
    \end{aligned}  
  \end{equation}
  for all monomial forms $\zeta''_{\conj J_q} d\conj{v^{J_q}} \in \smfb 0q
  (\cl K_c;L)$.

  Taking into account (\ref{eq:tame-Curv-01}) and the expression of
  $\tCurv_F$ in the proof of Lemma \ref{lem:murakami-trick}, it follows that
  \begin{equation*}
    \begin{aligned}
      &\hphantom{=} \pi  \paren{g^F_- s_F^- - g^F_+ s_F^+} \norm{\zeta''_{\conj
          J_q} d\conj{v^{J_q}}}_2^2 + \tCurv_F \paren{\zeta''_{\conj J_q}
      d\conj{v^{J_q}}, \zeta''_{\conj J_q} d\conj{v^{J_q}}} \\
      &= \pi \paren{g^F_- \paren{s_F^- - R^-(J_q)}-
        g^F_+ \paren{s_F^+ - R^+(J_q)}} \norm{\zeta''_{\conj J_q}
        d\conj{v^{J_q}}}_{2}^2 \\
      &= \pi \paren{\paren{s_F^+ + M} \paren{s_F^- - R^-(J_q)}-
        \paren{s_F^+ - R^+(J_q)}} \norm{\zeta''_{\conj J_q}
        d\conj{v^{J_q}}}_{2}^2 \; .
    \end{aligned}
  \end{equation*}
  Since $q < s_F^-$, it follows that $s_F^- - R^-(J_q) \geq 1$ for any
  multiindex $J_q$. 
  Note also that $s_F^+ - R^+(J_q) \leq s_F^+$ for any $J_q$. 
  Therefore, by the choice of $g^F_+$ and $g^F_-$, one obtains $g^F_-
  \paren{s_F^- - R^-(J_q)}- g^F_+ \paren{s_F^+ - R^+(J_q)} \geq M$ and
  thus (\ref{eq:tr-tCurv_F-bbd-monomial}) follows.
  This completes the proof.
\end{proof}

Considering the definition of $\altH(M)$ in (\ref{eq:def-altH}), Lemma
\ref{lem:Murakami-trick2} then implies that, if $q < s_F^-$, then,
given $M > 0$, $g$ can be chosen such that
\begin{equation*}
  \tCurv_F'(\zeta'',\zeta'') + \tCurv(\zeta, \zeta) \geq 
  \pi \int_{K_c} e^{-\chiX} \Tr_{g,\eta} \pr_F \paren{\paren{\altH(M)}^\dual
      (\zeta \otimes \conj\zeta)}
\end{equation*}
for all $\zeta = \zeta' + \zeta'' \in \smFBCtwo$.
Combining this with (\ref{eq:Bochner-Kodaira_ineq2}) yields
\begin{cor} \label{cor:BK-ineq2-with-Murakami}
  Suppose $q < s_F^-$. Then, given any constant $M > 0$, the
  translational invariant hermitian metric $g$ can be chosen suitably such that
  (\ref{eq:Bochner-Kodaira_ineq2}) yields
  \begin{equation*}
    \begin{aligned}
      \norm{S_q\zeta}_3^2 + \norm{T_{q-1}^*\zeta}_1^2 \geq 
      &~\pi \int_{K_c} e^{-\chiX} \Tr_{g,\eta} \pr_F \paren{\paren{\altH(M)}^\dual
      (\zeta \otimes \conj\zeta)}
      \\
      &+ \wCurv_F'(\zeta'',\zeta'')
      + \wCurv(\zeta,\zeta)
      + \wtCurv(\zeta,\zeta)
    \end{aligned}
  \end{equation*}
  for all $\zeta = \zeta' + \zeta'' \in \smFBCtwo \cap \Dom T_{q-1}^*$,
  where $\zeta'' \in \smfb 0q(\cl K_c;L)$ and $\zeta' \in \smfb 1{q-1}(\cl K_c;L) \cap
  \Dom \dbarf^*$.
\end{cor}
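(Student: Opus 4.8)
The plan is to read off the stated inequality directly from the Bochner--Kodaira inequality (\ref{eq:Bochner-Kodaira_ineq2}) by decomposing its two curvature integrals through the curvature splitting (\ref{eq:Theta}) and then applying Murakami's trick in the precise form of Lemma \ref{lem:Murakami-trick2}. First I would fix $q < s_F^-$ and a constant $M > 0$, pass to an $\Hform$-apt coordinate system, and choose the translational invariant metric $g$ to be exactly the diagonal metric produced in Lemma \ref{lem:Murakami-trick2}; this is the only place where the freedom in $g$ is consumed. Throughout, $\zeta = \zeta' + \zeta''$ with $\zeta' \in \smfb 1{q-1}(\cl K_c;L) \cap \Dom \dbarf^*$ and $\zeta'' \in \smfb 0q(\cl K_c;L)$.

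Next I would split the right-hand side of (\ref{eq:Bochner-Kodaira_ineq2}) into tame, wild and weight parts. Using $\Theta = \pi\,\iddbar\Hform + 2\,\iddbar\Re\hbar_\delta + \iddbar\chiX$ from (\ref{eq:Theta}), the $\pr_F$-curvature integral $\int_{K_c}e^{-\chiX}\Tr_{g,\eta}\pr_F\paren{\oTheta^\dual(\zeta\otimes\conj\zeta)}$ decomposes as $\tCurv(\zeta,\zeta) + \wCurv(\zeta,\zeta) + \wtCurv(\zeta,\zeta)$ as in (\ref{eq:wild-curv-integral}), while the trace integral $-\int_{K_c}\paren{\Tr_g\oTheta_{v\conj v}}\abs{\zeta''}_{g,\eta,\chi}^2$ decomposes as $\tCurv_F'(\zeta'',\zeta'') + \wCurv_F'(\zeta'',\zeta'')$ as in (\ref{eq:wild-trace-integral}). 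I would emphasize that no weight contribution survives in the trace integral, precisely because $\iddbarb\chiX = 0$ (as already noted just above the corollary); this is the one identity that deserves explicit mention.

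Then I would regroup the two tame summands $\tCurv_F'(\zeta'',\zeta'')$ and $\tCurv(\zeta,\zeta)$. Writing $\tCurv(\zeta,\zeta)$ as the sum of its $\Hform_E$-, $\Hform_{u\conj v}$-, $\Hform_{v\conj u}$- and $\Hform_F$-pieces, I would leave the first three pieces untouched and apply Lemma \ref{lem:Murakami-trick2} to the remaining combination, obtaining $\tCurv_F'(\zeta'',\zeta'') + \tCurv_F(\zeta'',\zeta'') \geq \pi M\norm{\zeta''}_2^2$. Since $\pi M\norm{\zeta''}_2^2 = \pi\int_{K_c}e^{-\chiX}\Tr_{g,\eta}\paren{M\,\zeta''\otimes\conj{\zeta''}}$, comparison with the defining property (\ref{eq:def-altH}) of $\altH(M)$ reassembles the three untouched pieces together with this term into $\pi\int_{K_c}e^{-\chiX}\Tr_{g,\eta}\pr_F\paren{\paren{\altH(M)}^\dual(\zeta\otimes\conj\zeta)}$. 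This is exactly the intermediate inequality displayed before the corollary. Substituting this lower bound back into (\ref{eq:Bochner-Kodaira_ineq2}) and carrying the untouched wild terms $\wCurv_F'(\zeta'',\zeta'') + \wCurv(\zeta,\zeta) + \wtCurv(\zeta,\zeta)$ along unchanged yields the assertion for all $\zeta \in \smFBCtwo \cap \Dom T_{q-1}^*$.

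Since the decisive curvature estimate is supplied wholesale by Lemma \ref{lem:Murakami-trick2} and the pointwise positivity of the boundary and norm-square terms has already been used to pass from (\ref{eq:Bochner-Kodaira-10}) to (\ref{eq:Bochner-Kodaira_ineq2}), the only genuine work here is bookkeeping. I expect no real obstacle: every step is either an identity (the splitting of $\Theta$ and the recombination into the $\altH(M)$-integral) or a verbatim invocation of a lemma, with the vanishing of the weight-trace term being the single point that one must check rather than assume.
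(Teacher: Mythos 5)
Your proposal is correct and follows essentially the same route as the paper: decompose both curvature integrals of (\ref{eq:Bochner-Kodaira_ineq2}) via (\ref{eq:Theta}) (using $\iddbarb\chiX = 0$ for the trace integral), apply Lemma \ref{lem:Murakami-trick2} to $\tCurv_F'(\zeta'',\zeta'') + \tCurv_F(\zeta'',\zeta'')$, and reassemble the remaining pieces into the $\altH(M)$-integral via (\ref{eq:def-altH}). The bookkeeping you describe is exactly the paper's argument.
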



The remaining part of this section is devoted to getting a suitable
estimate of the integral
\begin{equation*}
  \pi \int_{K_c} e^{-\chiX} \Tr_{g,\eta} \pr_F \paren{\paren{\altH(M)}^\dual
  (\zeta \otimes \conj\zeta)}
\end{equation*}
by varying $\Hform_E$ in $\altH(M)$ (see (\ref{eq:def-altH})) according to
Proposition \ref{prop:H_bot_R-arbitrary}.

\begin{lemma} \label{lem:bound-tame-part}
  Given a constant $M > 0$ and a fixed translational invariant hermitian metric $g$ on $X$ such that
  the decomposition (\ref{eq:cotangent-split}) is orthogonal, one can choose
  $\Hform_E$ sufficiently positive according to Proposition
  \ref{prop:H_bot_R-arbitrary} such that
  \begin{equation*}
    \pi \int_{K_c} e^{-\chiX} \Tr_{g,\eta} \pr_F \paren{\paren{\altH(M)}^\dual
    (\zeta \otimes \conj\zeta)}
    \geq \tfrac{\pi }{4} M \norm\zeta_2^2 
  \end{equation*}
  for all $\zeta \in \smFBCtwo$.
\end{lemma}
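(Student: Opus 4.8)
The plan is to reduce the statement to a pointwise positivity estimate for a hermitian form and then to exploit the freedom granted by Proposition~\ref{prop:H_bot_R-arbitrary} to make $\Hform_E$ as positive as needed. Since $e^{-\chiX}\geq 0$ and $\norm\zeta_2^2=\int_{K_c}e^{-\chiX}\Tr_{g,\eta}\paren{\zeta\otimes\conj\zeta}$, it is enough to prove the pointwise inequality
\begin{equation*}
  \Tr_{g,\eta}\pr_F\paren{\paren{\altH(M)}^\dual(\zeta\otimes\conj\zeta)}
  \geq \tfrac M4\,\Tr_{g,\eta}\paren{\zeta\otimes\conj\zeta}
\end{equation*}
at every point of $K_c$, for all $\zeta=\zeta'+\zeta''$ with $\zeta'\in\smfb 1{q-1}$ and $\zeta''\in\smfb 0q$; integrating against $\pi\,e^{-\chiX}d\mu$ then yields the asserted bound. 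By the defining identity (\ref{eq:def-altH}), the left-hand side is the hermitian form
\begin{equation*}
  Q(\zeta,\zeta):=\Tr_{g,\eta}\Hform_E^\dual(\zeta'\otimes\conj{\zeta'})
  +2\Re\Tr_{g,\eta}\Hform_{u\conj v}^\dual(\zeta'\otimes\conj{\zeta''})
  +M\,\Tr_{g,\eta}\paren{\zeta''\otimes\conj{\zeta''}},
\end{equation*}
which carries the block structure with $\Hform_E$ on the $\zeta'$-diagonal, the scalar $M$ on the $\zeta''$-diagonal, and the fixed off-diagonal block $\Hform_{u\conj v}=\conj{\Hform_{v\conj u}}$ (recall $\oTheta$, and hence $\Hform$, is hermitian).

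Next I would absorb the cross term by a Peter--Paul (weighted Cauchy--Schwarz) estimate. Because $\Hform$ is a constant hermitian form and $g$ is translational invariant, the operator norm of the contraction $\Tr_{g,\eta}\Hform_{u\conj v}^\dual$ with respect to $\abs\cdot_{g,\eta,\chi}$ is bounded by a constant $C_0$ depending only on $\Hform$ and $g$, uniformly over $X$. Hence
\begin{equation*}
  2\Re\Tr_{g,\eta}\Hform_{u\conj v}^\dual(\zeta'\otimes\conj{\zeta''})
  \geq -\tfrac{2C_0^2}{M}\abs{\zeta'}_{g,\eta,\chi}^2-\tfrac M2\abs{\zeta''}_{g,\eta,\chi}^2,
\end{equation*}
so that, using the $g$-orthogonality of the decomposition (\ref{eq:cotangent-split}),
\begin{equation*}
  Q(\zeta,\zeta)\geq \Tr_{g,\eta}\Hform_E^\dual(\zeta'\otimes\conj{\zeta'})
  -\tfrac{2C_0^2}{M}\abs{\zeta'}_{g,\eta,\chi}^2+\tfrac M2\abs{\zeta''}_{g,\eta,\chi}^2.
\end{equation*}

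Finally I would invoke Proposition~\ref{prop:H_bot_R-arbitrary} to replace the cocycle defining $L$ by a cohomologous one for which $\Re\Hbot$ is enlarged; since the corresponding coboundary alters only the $E\times E$-block, the cross terms $\Hform_{u\conj v},\Hform_{v\conj u}$, the base term $\Hform_F$, the metrics $g,\eta$ and the constants $M,C_0$ are all left untouched. Choosing $\Re\Hbot=\lambda I_{n-m}$ with $\lambda$ large makes $\Hform_E=\lambda I_{n-m}+\cplxi\,\Im\Hbot$ positive definite with smallest eigenvalue tending to $\infty$; as $\zeta'$ carries exactly one fibre index $d\conj{u^i}$ in each monomial, the contraction $\Tr_{g,\eta}\Hform_E^\dual(\zeta'\otimes\conj{\zeta'})$ is bounded below by $\lambda_{\min}\abs{\zeta'}_{g,\eta,\chi}^2$ for the least eigenvalue $\lambda_{\min}$ of $\Hform_E$ relative to $g$. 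Taking $\lambda$ so large that $\lambda_{\min}\geq \tfrac{2C_0^2}{M}+\tfrac M4$ gives $Q(\zeta,\zeta)\geq \tfrac M4\abs{\zeta'}_{g,\eta,\chi}^2+\tfrac M2\abs{\zeta''}_{g,\eta,\chi}^2\geq\tfrac M4\abs{\zeta}_{g,\eta,\chi}^2$, which is the desired pointwise bound. The main technical point to verify carefully is precisely this last step: that enlarging $\Re\Hbot$ forces the fibre contraction $\Tr_{g,\eta}\Hform_E^\dual(\zeta'\otimes\conj{\zeta'})$ to dominate uniformly in the base multi-indices and over all of $K_c$, so that the chosen $\Hform_E$ can be taken independent of the point and of $c$.
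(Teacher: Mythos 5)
Your proof is correct and follows essentially the same strategy as the paper: bound the mixed term $\Hform_{u\conj v}^\dual(\zeta'\otimes\conj{\zeta''})$ by a Cauchy--Schwarz constant that is independent of $\Hform_E$, then invoke Proposition \ref{prop:H_bot_R-arbitrary} to take $\Hform_E\geq\lambda g_E$ with $\lambda$ large enough to absorb it. The only (cosmetic) difference is that you absorb the cross term pointwise via a Peter--Paul inequality, whereas the paper packages it as a bounded operator $\Nform$ with $\norm{\Nform\zeta''}_2\leq\sqrt q\,\abs{\Hform_{u\conj v}}_g\norm{\zeta''}_2$ and completes the square at the $L^2$ level.
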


\begin{proof}
  For $q = 0$, it follows from (\ref{eq:def-altH}) that
  \begin{equation*}
    \pi \int_{K_c} e^{-\chiX} \Tr_{g,\eta} \pr_F \paren{\paren{\altH(M)}^\dual
    (\zeta \otimes \conj\zeta)} = \pi M \norm\zeta_2^2 \geq
    \tfrac{\pi}{4} M \norm\zeta_2^2 \; ,
  \end{equation*}
  so this case is done.

  Assume $q \neq 0$.
  Since $\Hform_{u\conj v}^\dual$ is a bounded linear operator on
  $\hilbFB 10_{c,\chi} \otimes \conj{\hilbFB 01_{c,\chi}}$ (where
  $\conj{\hilbFB 01_{c,\chi}}$ here means the complex conjugate of
  $\hilbFB 01_{c,\chi}$), it follows that
  there is a bounded linear operator $\Nform \colon \hilbFB
  0q_{c,\chi} \to \hilbFB 1{q-1}_{c,\chi}$ such that
  \begin{equation*}
    \int_{K_c} e^{-\chiX} \Tr_{g,\eta} \Hform_{u\conj v}^\dual (\zeta'
    \otimes \conj{\zeta''}) = \inner{\zeta'}{\Nform \zeta''}_2 
  \end{equation*}
  for all $\zeta' \in \hilbFB 1{q-1}_{c,\chi}$ and $\zeta'' \in
  \hilbFB 0q_{c,\chi}$. 
  In fact, after a linear change of coordinates such
  that $g$ becomes the Euclidean metric while keeping the
  decomposition (\ref{eq:cotangent-split}) orthogonal, one has
  \begin{equation*}
    \Tr_{g,\eta} \Hform_{u\conj v}^\dual (\zeta' \otimes
    \conj{\zeta''}) = \eta \stsum_{J_{q-1}} \sum_{i=1}^{n-m} \sum_{j=1}^m
    \zeta'_{\conj i \conj J_{q-1}} \conj{(\Hform_{v\conj u})_{j \conj i}~\zeta''_{\conj
        j \conj J_{q-1}}} \; ,
  \end{equation*}
  where $\sum'_{J_{q-1}}$ denotes summation over all ordered
  multiindices $J_{q-1}$ such that $1\leq j_1 < \dots < j_{q-1} \leq
  m$, and $(\Hform_{v\conj u})_{j\conj i}$'s are the components of $\Hform_{v\conj u} =
  \conj{\Hform_{u\conj v}}$.
  Therefore, under such coordinates,
  \begin{equation*}
    \paren{\Nform\zeta''}_{\conj i \conj J_{q-1}} 
    = \sum_{j=1}^m (\Hform_{v\conj u})_{j \conj i}~\zeta''_{\conj j \conj J_{q-1}} \; .
  \end{equation*}
  Moreover,
  \begin{equation*}
    \begin{aligned}
      \abs{\Nform\zeta''}_{g,\eta}^2 
      &= \eta \stsum_{J_{q-1}} \sum_{i=1}^{n-m} \abs{\sum_{j=1}^m
        (\Hform_{v\conj u})_{j \conj i}~\zeta''_{\conj j \conj J_{q-1}}}^2 \\
      &\leq \eta \stsum_{J_{q-1}} \sum_{i=1}^{n-m} \paren{\sum_{j=1}^m
      \abs{(\Hform_{v\conj u})_{j \conj i}}^2} \paren{\sum_{j=1}^m \abs{\zeta''_{\conj j
          \conj J_{q-1}}}^2}  && \text{\parbox{2.6cm}{by
        Cauchy-- \\ Schwarz ineq.,}} 
    \\
      &= \abs{\Hform_{v\conj u}}_g^2 \cdot q \abs{\zeta''}_{g,\eta}^2
      = \abs{\Hform_{u\conj v}}_g^2 \cdot q \abs{\zeta''}_{g,\eta}^2
      && \text{as }\Hform_{u\conj v} = \conj{\Hform_{v\conj u}} \; .
    \end{aligned}
  \end{equation*}
  Since both $\Hform_{u\conj v}$ and $g$ are translational invariant forms,
  $\abs{\Hform_{u\conj v}}_g^2$ is a constant.
  Set $\nu := \sqrt q \abs{\Hform_{u\conj v}}_g$. 
  Then, one has
  \begin{equation} \label{eq:nu-ineq} \tag{$*_\nu$}
    \norm{\Nform\zeta''}_2 \leq \nu \norm{\zeta''}_2
  \end{equation}
  for all $\zeta'' \in \hilbFB 0q_{c,\chi}$.
  Note that $\nu$ depends only on $q$, $\Hform_{u\conj v}$ and $g$.
  It is independent of $\Hform_E$ in particular.
  
  Since the decomposition (\ref{eq:cotangent-split}) is orthogonal
  with respect to $g$, $g$ can be decomposed into $g_E + g_F$ such
  that $g_E$ is a hermitian metric on $\Tgtf{1,0}$ and $g_F$ is that
  on $\Tgtb{1,0}$.
  Choose a real number $\lambda > 0$ such that
  \begin{equation} \label{eq:lambda-ineq} \tag{$*_\lambda$}
    \lambda \geq \max\set{\frac{M}{2}, \frac{2\nu^2}{M}, 4\nu} \; .
  \end{equation}
  Since $\nu$ is independent of $\Hform_E$, by varying the real part
  of the matrix of $\Hform_E$ under the chosen apt coordinates
  according to Proposition \ref{prop:H_bot_R-arbitrary}, $\Hform_E$
  can be chosen such that
  \begin{equation*}
    \Hform_E \geq \lambda g_E \; ,
  \end{equation*}
  and therefore,
  \begin{equation*}
    \int_{K_c} e^{-\chiX} \Tr_{g,\eta} \Hform_E^\dual (\zeta'
      \otimes \conj{\zeta'}) \geq \lambda \norm{\zeta'}_2^2 
  \end{equation*}
  for all $\zeta' \in \smfb 1{q-1}(\cl K_c;L)$. 

  It follows from (\ref{eq:def-altH}) that, for any $\zeta = \zeta' +
  \zeta'' \in \smFBCtwo$,
  \begin{align*}
      &~ \int_{K_c} e^{-\chiX} \Tr_{g,\eta} \pr_F \paren{\altH(M)}^\dual (\zeta
      \otimes \conj \zeta) && \\
      \geq &~
      \lambda \norm{\zeta'}_2^2 + 2\Re \inner{\zeta'}{\Nform\zeta''}_2
      + M \norm{\zeta''}_2^2 && \\
      = &~
      \lambda \norm{\zeta' + \frac{1}{\lambda} \Nform \zeta''}_2^2  
      - \frac{1}{\lambda} \norm{\Nform\zeta''}_2^2
      + M \norm{\zeta''}_2^2 && \text{by completing square} \; , \\
      \geq &~
      \lambda \norm{\zeta' + \frac{1}{\lambda} \Nform \zeta''}_2^2  
      - \frac{\nu^2}{\lambda} \norm{\zeta''}_2^2
      + M \norm{\zeta''}_2^2 && \text{by (\ref{eq:nu-ineq})} \; , \\
      \geq &~
      \frac{M}{2} \paren{\norm{\zeta' + \frac{1}{\lambda} \Nform \zeta''}_2^2
        + \norm{\zeta''}_2^2} && \text{by (\ref{eq:lambda-ineq}), thus
        }\frac{\nu^2}{\lambda} \leq \frac{M}{2} \; , \\
      = &~
      \frac{M}{2} \norm{\zeta + \frac{1}{\lambda} \Nform \zeta''}_2^2
      && \text{as }\hilbFB 1{q-1}_{c,\chi} \perp \hilbFB 0q_{c,\chi}
      \; .
  \end{align*}
  Furthermore, since
  \begin{align*}
      \norm{\zeta + \frac{1}{\lambda} \Nform \zeta''}_2 &\geq
      \norm\zeta_2 - \frac{1}{\lambda} \norm{\Nform\zeta''}_2 \\ 
      &\geq
      \norm\zeta_2 - \frac{\nu}{\lambda} \norm{\zeta''}_2 && \text{by
        (\ref{eq:nu-ineq})} \; , \\
      &\geq
      \paren{1 - \frac{\nu}{\lambda}} \norm\zeta_2 && \text{as
      }\norm{\zeta''}_2 \leq \norm\zeta_2 \; , \\
      &\geq \frac{3}{4} \norm\zeta_2 \geq 0 && \text{by
        (\ref{eq:lambda-ineq})}\; ,
  \end{align*}
  one has
  \begin{equation*}
    \frac{M}{2} \norm{\zeta + \frac{1}{\lambda} \Nform \zeta''}_2^2
    \geq \frac{M}{2} \cdot \paren{\frac{3}{4}}^2 \norm\zeta_2^2 \geq
    \frac{M}{4} \norm\zeta_2^2 \; .
  \end{equation*}
  This completes the proof.
\end{proof}

} 






\section{The linearizable case}
\label{sec:proof-linearizable}

%
%

\subsection{Proof of Theorem \ref{thm:main_thm} for linearizable $L$}

The proof of Theorem \ref{thm:main_thm} for linearizable $L$ is given
here so that one can see clearly how the proof works without having to
handle additional technicality required for the case of
non-linearizable line bundles.

\begin{thm} \label{thm:proof-linearizable}
  Suppose $L$ is linearizable and $q < s_F^-$ or $q > m -
  s_F^+$. Then, for any $\psi \in \kashf{0,q}(X;L)$ such that
  $\dbar\psi = 0$, there exists $\xi \in \kashf{0,q-1}(X;L)$ such that
  $\dbar\xi = \psi$ on $X$. (In case $q = 0 < s_F^-$, this means $\psi
  = 0$.) In other words, by virtue of Theorem \ref{rem:Kazama-Dolbeault},
  $H^q(X,L) = 0$ for any $q$ in the given range.
\end{thm}

\begin{proof}
  Fix any $\psi \in \kashf{0,q}(X;L) \cap \ker \dbar$. 

  An $L^2$-norm $\norm\cdot_{X,\chi}$ is chosen as follows.
  Since $L$ is linearizable, one can take $\hbar = 0$ (see \S
  \ref{sec:metric-on-L} for the definition of $\hbar$).
  Then, choose $\delta = 0$ and thus $\hbar_\delta = \hbar - \delta =
  0$.
  Choose the translational invariant hermitian metric $g$ of the form as described in
  the proof of Lemma \ref{lem:murakami-trick} for $q > m - s_F^+$ or Lemma
  \ref{lem:Murakami-trick2} for $q < s_F^-$, with $M = 1$.
  For the hermitian form $\Hform$ associated to $L$, choose $\Hform_E
  := \Hform|_{E\times E}$ as described in the proof of Lemma
  \ref{lem:bound-tame-part}.
  A hermitian metric $\eta$ on $L$ is then defined as in \S \ref{sec:metric-on-L}.
  Choose a convex increasing smooth function $\chiR$ (thus
  $\chiX := \chiR \circ \varphi$ is plurisubharmonic,
  i.e.~$\iddbar\chiX \geq 0$) such that $\norm\psi_{X,\chi} < \infty$.
  An $L^2$-norm $\norm\cdot_{X,\chi}$ is then fixed and $\psi \in
  \hilbbsp{0,q}_{\chi}(X;L)$.

  Note that every $\zeta \in \smFBCXtwo$ is contained in
  $\smFBtwo[0\,]$ for some sufficiently large but finite $c > 0$. 
  Consequently, the conclusion of Corollary
  \ref{cor:BK-ineq-with-Murakami} when $q > m - s_F^+$ or Corollary
  \ref{cor:BK-ineq2-with-Murakami} when $q < s_F^-$, as well as that
  of Lemma \ref{lem:bound-tame-part}, holds for all $\zeta = \zeta'
  + \zeta'' \in \smFBCXtwo$, where $\zeta' \in \smfb 1{q-1}_0(X;L)$
  and $\zeta'' \in \smfb 0q_0(X;L)$. 
  Since $\hbar_\delta = 0$, $\wCurv(\zeta,\zeta)$ (see
  (\ref{eq:wild-curv-integral})) and $\wCurv'_F(\zeta'',\zeta'')$ (see
  (\ref{eq:wild-trace-integral})) both vanish for all $\zeta = \zeta'
  + \zeta'' \in \smFBCXtwo$.

  Since $\chiX$ is plurisubharmonic on $X$ and $\dbarb\chiX = 0 =
  \diffb\chiX$, one can choose at every point $z\in X$ the coordinates
  such that both $g$ and $\cplxi \difff\dbarf \chiX$ are simultaneously
  diagonalized while keeping the decomposition
  (\ref{eq:cotangent-split}) orthogonal, and see that
  \begin{equation*}
    \Tr_{g,\eta} \pr_F \paren{\paren{\ddbar\chiX}^\dual (\zeta \otimes \conj\zeta)}
    = \Tr_{g,\eta} \paren{\difff\dbarf\chiX}^\dual (\zeta' \otimes \conj{\zeta'})
    \geq 0 \; .
  \end{equation*}
  Therefore, $\wtCurv(\zeta, \zeta) \geq 0$ (see
  (\ref{eq:wild-curv-integral})).

  As a result, combining Lemma \ref{lem:bound-tame-part} as well as
  the above facts about $\wCurv$, $\wCurv'_F$ and $\wtCurv$ with
  Corollary \ref{cor:BK-ineq-with-Murakami} or Corollary
  \ref{cor:BK-ineq2-with-Murakami}, one obtains
  \begin{equation*}
    \norm{S_q\zeta}_3^2 + \norm{T_{q-1}^*\zeta}_1^2
    \geq \tfrac{\pi}{4} \norm\zeta_2^2  
  \end{equation*}
  for all $\zeta \in \smFBCXtwo$. 
  This is the required $L^2$ estimate. 
  Proposition \ref{prop:estimate-imply-strong-sol} and Remark
  \ref{rem:regularity} then assert that
  there exists $\xi \in \kashf{0,q-1}(X;L)$ such that $\dbar\xi =
  \psi$ on $X$.
\end{proof}


\section{The non-linearizable case}
\label{chap:non-linearizable-case}

%
%

{
\newcommand{\Deltab}{\bm{\updelta}}
\newcommand{\wXi}{\widetilde\Xi}

\newcommand{\addD}[1]{#1}
\newcommand{\delD}[1]{}

For a non-linearizable line bundle $L$, the wild curvature terms
$\wCurv$ (see (\ref{eq:wild-curv-integral})) and $\wCurv'_F$ (see
(\ref{eq:wild-trace-integral})) are not identically zero.
In order to get the estimates for these terms, Takayama's Weak $\diff\dbar$-Lemma
(ref.~\cite{Takayama}*{Lemma 3.14}) is invoked. 
One is then forced to restrict attention to each of the $K_c$'s and
obtain the required $L^2$ estimates there.
What then remains is to show that the existence of a solution of the
$\dbar$-equation $\dbar \xi = \psi$ on every $K_c$ implies the
existence of a global solution.
The argument for this latter part is essentially the same as the one in
\cite{Grauert&Remmert}*{Ch.~IV, \S 1, Thm.~7}.

An apt coordinate system is fixed throughout this section.

\subsection{Bounds on the wild curvature terms}
\label{sec:bound-wCurv}

Takayama proves in \cite{Takayama} the following Weak $\ddbar$-Lemma.
{
  \theoremstyle{plain}
  \newtheorem{ddblem}[prop]{Weak $\diff\dbar$-Lemma}

  \begin{ddblem}[cf.~\cite{Takayama}*{Lemma 3.14}] \label{lem:weak-ddbarlemma}
    Let 
    $\omega$ be a positive real $(1,1)$-form on $X$, and let $\theta$ be a smooth
    real $1$-form on $X$ such that $\theta = \conj\beta + \beta$ for
    some smooth $(0,1)$-form $\beta$, and $d\theta$ is of
    type $(1,1)$. 
    Then for every positive number $\varepsilon$ and every
    relatively compact open subset $W$ of $X$, there exists a
    smooth function $\delta$ on $X$ such that
    \begin{equation*}
      -\varepsilon\omega < d\theta - 2\cplxi\diff\dbar\Re\delta <
      \varepsilon\omega \quad\text{on }W \; .
    \end{equation*}
    Moreover, if $\beta \in \kashf{0,1}(X)$, then
    $\delta$ can be chosen such that $\delta \in \kashf{}(X)$.
  \end{ddblem}
}

In the current situation, the role of $\beta$ in Lemma
\ref{lem:weak-ddbarlemma} is taken by $\cplxi~\dbar\hbar$ (therefore
$d \theta = 2\iddbar\Re\hbar$), and that of $W$ by $K_c$.

\begin{remark}
  In Takayama's formulation, the assertion of the Weak $\ddbar$-Lemma
  is that there exists a smooth real valued function $f_{\eps W} :=
  2(\Im f_0 + \Im \Psi_{M_0})$ on $X$ such that $-\varepsilon\omega < d\theta
  - \cplxi\ddbar f_{\eps W} < \varepsilon\omega \quad\text{on }W$,
  in which $f_0$ is a smooth function on $X$ such that $\beta = \phi +
  \dbar f_0$ for some real analytic $(0,1)$-form $\phi$ in $\kashf{0,1}(X)$,
  and $\Psi_{M_0}$ is some real analytic function in $\kashf{}(X)$.
  Therefore, the smooth function $\delta$ here is given by $\delta :=
  -\cplxi (f_0 + \Psi_{M_0})$ in Takayama's notation.
  If $\beta \in \kashf{0,1}(X)$, then one has $f_0 \in \kashf{}(X)$ as
  $\dbarf f_0 = 0$, so $\delta \in \kashf{}(X)$ also.
\end{remark}

\begin{remark} 
  As a side remark, following the construction of $\delta$ in
  \cite{Takayama}*{Lemma 3.14}, $\dbar \hbar_\delta = \dbar\hbar -
  \dbar\delta$ is real analytic on $X$, so $\hbar_\delta$ is real
  analytic on $\fieldC^n$. 
  It follows that the hermitian metric $\eta$ on $L$ is real analytic.
\end{remark}

Suitable estimates for
the wild curvature terms $\wCurv$ 
and $\wCurv'_F$ are obtained
by choosing a proper $\delta \in \kashf{}(X)$ according to the Weak
$\ddbar$-Lemma.


\begin{lemma} \label{lem:bounds-wild-curv}
  Suppose a hermitian metric $g$ on $X$ and a choice of $\Hform_E$ are
  fixed.
  Then, on every $K_c$ where $0 < c < \infty$, given any real number
  $\eps_\wildsub > 0$ and for any $q \geq 0$, one can choose $\delta_c
  \in \kashf{}(X)$ which yields a hermitian metric $\eta_c$ on $L$
  such that,
  for any given weight $\chiX$,
  \begin{gather} 
    \label{eq:bound-wCurv}
    \abs{\wCurv(\zeta,\zeta)} \leq
    \varepsilon_\wildsub q  \norm\zeta_{K_c,\eta_c,\chi}^2 \\
    \label{eq:bound-tr-wCurv}
    \abs{\wCurv'_F(\zeta'',\zeta'')} \leq
    \varepsilon_\wildsub m  \norm{\zeta''}_{K_c,\eta_c,\chi}^2 
    \leq \eps_\wildsub m \norm\zeta_{K_c,\eta_c,\chi}^2
  \end{gather}
  for all $\zeta = \zeta' + \zeta'' \in \smFBCtwo$ where $\zeta' \in \smfb
  1{q-1}(\cl K_c; L)$ and $\zeta'' \in \smbform{0,q}(\cl K_c; L)$.
\end{lemma}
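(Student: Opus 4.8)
The plan is to invoke Takayama's Weak $\ddbar$-Lemma (Lemma~\ref{lem:weak-ddbarlemma}) to make the wild curvature form $\Theta_\wCurv = 2\iddbar\Re\hbar_\delta$ uniformly small on $K_c$, and then to read off the two integral estimates (\ref{eq:bound-wCurv}) and (\ref{eq:bound-tr-wCurv}) from this pointwise bound. First I would verify the hypotheses of the lemma with $\beta := \cplxi\dbar\hbar$ and $\theta := \beta + \conj\beta = \cplxi\paren{\dbar\hbar - \diff\conj\hbar}$. A direct computation gives $d\theta = \cplxi\diff\dbar\paren{\hbar + \conj\hbar} = 2\iddbar\Re\hbar$, which is real of type $(1,1)$. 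Since $\hbar$ is holomorphic along the fibre directions, $\dbar\hbar = \dbarb\hbar$ involves only the $d\conj{v^j}$, and its coefficients $\diff\hbar/\diff\conj{v^j}$ are again fibre-holomorphic (because $\diff/\diff\conj{u^i}$ commutes with $\diff/\diff\conj{v^j}$ and annihilates $\hbar$); hence $\beta \in \kashf{0,1}(X)$. This is exactly the hypothesis needed for the strong form of Lemma~\ref{lem:weak-ddbarlemma}.

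Next I would apply the lemma with $W = K_c$, with the fixed $(1,1)$-form $\omega = -\Im g$, and with an auxiliary parameter $\varepsilon > 0$ to be fixed at the end. It produces a $\delta_c \in \kashf{}(X)$ with
\begin{equation*}
  -\varepsilon\omega < d\theta - 2\iddbar\Re\delta_c < \varepsilon\omega \quad\text{on } K_c \; .
\end{equation*}
Since $d\theta - 2\iddbar\Re\delta_c = 2\iddbar\Re\paren{\hbar - \delta_c} = \Theta_\wCurv$ (computed with $\hbar_{\delta_c} := \hbar - \delta_c$), this reads $-\varepsilon\omega < \Theta_\wCurv < \varepsilon\omega$ on $K_c$. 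The function $\delta_c$ then determines $\etaw = e^{-2\Re\hbar_{\delta_c}}$, hence the metric $\eta_c := \etat\etaw$ on $L$ and the norm $\norm\cdot_{K_c,\eta_c,\chi}$, as in \S\ref{sec:metric-on-L}.

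Finally I would convert this two-sided form bound into the asserted estimates. The inequality $-\varepsilon\omega < \Theta_\wCurv < \varepsilon\omega$ means that at each point of $K_c$ the eigenvalues of the curvature tensor of $\Theta_\wCurv$ relative to $g$ lie in $(-\varepsilon,\varepsilon)$; restricting the inequality to the base directions gives the same bound for its $v\conj v$-block relative to $g_F$. Diagonalizing $g$ and $\Theta_\wCurv$ and counting indices exactly as in the proof of Lemma~\ref{lem:murakami-trick}, the contraction defining $\wCurv(\zeta,\zeta)$ is a sum over at most $q$ antiholomorphic indices of $\zeta$, each contributing an eigenvalue of absolute value at most $\varepsilon$, so that $\abs{\wCurv(\zeta,\zeta)} \leq \varepsilon q \norm\zeta_{K_c,\eta_c,\chi}^2$. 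Similarly $2\Tr_g\ddbarb\Re\hbar_{\delta_c}$ is the $g_F$-trace of the $v\conj v$-block, a sum of $m$ eigenvalues each bounded by $\varepsilon$, whence $\abs{\wCurv'_F(\zeta'',\zeta'')} \leq \varepsilon m \norm{\zeta''}_{K_c,\eta_c,\chi}^2 \leq \varepsilon m \norm\zeta_{K_c,\eta_c,\chi}^2$. Taking $\varepsilon = \varepsilon_\wildsub$ gives (\ref{eq:bound-wCurv}) and (\ref{eq:bound-tr-wCurv}). Because these bounds hold pointwise after factoring out the common weight $e^{-\chiX}$, they are valid for every weight $\chiX$, as required.

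The main obstacle is contained entirely in the Weak $\ddbar$-Lemma itself: it is what forces the restriction to a relatively compact $K_c$, since the $d$-exact form $\Theta_\wCurv$ cannot in general be made small on all of $X$. On the present side the only genuine verification is that $\beta = \cplxi\dbar\hbar$ lies in $\kashf{0,1}(X)$, which guarantees $\delta_c \in \kashf{}(X)$ and hence that $\eta_c$ stays holomorphic along the fibre directions and compatible with the Kazama-sheaf framework; the subsequent passage to $L^2$ bounds is a routine eigenvalue count.
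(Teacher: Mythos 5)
Your proposal is correct and follows essentially the same route as the paper's proof: apply Takayama's Weak $\diff\dbar$-Lemma on $K_c$ with $\beta = \cplxi\,\dbar\hbar$ to produce $\delta_c \in \kashf{}(X)$ satisfying a two-sided bound $-\varepsilon\omega < 2\cplxi\diff\dbar\Re\hbar_{\delta_c} < \varepsilon\omega$ on $K_c$, and then convert this pointwise form inequality into the two integral estimates by comparing the curvature contraction with that of $g$ itself (which gives $\norm{\zeta'}_{K_c,\eta_c,\chi}^2 + q\norm{\zeta''}_{K_c,\eta_c,\chi}^2 \leq q\norm{\zeta}_{K_c,\eta_c,\chi}^2$) and, for $\wCurv'_F$, by tracing the restriction to the base directions. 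Your explicit verification that $\beta \in \kashf{0,1}(X)$ (so that $\delta_c$ may indeed be taken in $\kashf{}(X)$) is a detail the paper records only in the surrounding text, and your eigenvalue count is the same comparison the paper carries out via the integral $\int_{K_c} e^{-\chiX}\Tr_{g,\eta_c}\pr_F\paren{g^\dual(\zeta\otimes\conj\zeta)}$, so no gap remains.
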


\begin{proof}
  First the estimate for $\wCurv$ is considered.
  Recall that $\omega$ is the $(1,1)$-form associated to $g$.
  The Weak $\diff\dbar$-Lemma asserts that, for any
  $\varepsilon_\wildsub > 0$, there exists $\delta_c \in
  \kashf{}(X)$ such that
  \begin{equation} \label{eq:ineq-on-W} 
    - 2 \varepsilon_\wildsub \omega < 2 \iddbar \Re \hbar_{\delta_c} < 2
    \varepsilon_\wildsub \omega \quad\text{on }K_c \; .
  \end{equation}
  Such $\delta_c$ yields a hermitian metric $\eta_c$ on $L$ given the
  fixed choice of $\Hform_E$.
  Then, it follows from (\ref{eq:wild-curv-integral}) that, for any
  weight $\chiX$,
  \begin{equation*}
    \begin{aligned}
      - \eps_\wildsub \int_{K_c} e^{-\chiX} \Tr_{g,\eta_c} \pr_F
      \paren{g^\dual (\zeta \otimes \conj\zeta)} 
      \leq \wCurv (\zeta,\zeta) 
      \leq \eps_\wildsub \int_{K_c} e^{-\chiX}
      \Tr_{g,\eta_c} \pr_F \paren{g^\dual (\zeta \otimes \conj\zeta)}
    \end{aligned}
  \end{equation*}
  for any $\zeta = \zeta' + \zeta'' \in \smFBCtwo$
  ($\eps_\wildsub$ instead of $2\eps_\wildsub$ in the bounds because of
  the factor $\frac{1}{2}$ in $\omega = - \Im g = \frac{\cplxi}{2}
  \sum_{k,\ell} g_{k \conj \ell} dz^k \wedge d\conj{z^\ell}$).
  Note that
  \begin{equation*}
    \int_{K_c} e^{-\chiX} \Tr_{g,\eta_c} \pr_F \paren{g^\dual
    (\zeta \otimes \conj \zeta)}
    =  \norm{\zeta'}_{K_c,\eta_c,\chi}^2 + q
    \norm{\zeta''}_{K_c,\eta_c,\chi}^2 \leq q
    \norm\zeta_{K_c,\eta_c,\chi}^2
  \end{equation*}
  when $q \geq 1$. 
  When $q = 0$, the integral on the left hand side is
  zero, so the above inequality is still valid. 
  As a result, one obtains
  \begin{equation*}
    -\varepsilon_\wildsub q \norm\zeta_{K_c,\eta_c,\chi}^2 \leq 
    \wCurv(\zeta,\zeta) \leq
    \varepsilon_\wildsub q  \norm\zeta_{K_c,\eta_c,\chi}^2
  \end{equation*}
  and hence (\ref{eq:bound-wCurv}).

  For the estimate for $\wCurv'_F$, note that
  (\ref{eq:ineq-on-W}) implies
  \begin{equation*}
    -2\varepsilon_\wildsub \pr_F \omega < 2\iddbarb \Re
    \hbar_{\delta_c} < 2 \varepsilon_\wildsub \pr_F \omega
    \quad\text{on }K_c \; .
  \end{equation*}
  Then, one has $-\varepsilon_\wildsub m < 2\Tr_g\ddbarb\Re
  \hbar_{\delta_c} < \varepsilon_\wildsub m$ with the same
  $\varepsilon_\wildsub$ and $\delta_c$ as above.
  Therefore, it follows from (\ref{eq:wild-trace-integral}) that
  \begin{equation*}
    -\varepsilon_\wildsub m \norm{\zeta''}_{K_c,\eta_c,\chi}^2 \leq 
    \wCurv'_F(\zeta'',\zeta'') \leq
    \varepsilon_\wildsub m  \norm{\zeta''}_{K_c,\eta_c,\chi}^2
  \end{equation*}
  for any $\zeta'' \in \smfb 0q(\cl K_c;L)$, and hence
  (\ref{eq:bound-tr-wCurv}).
\end{proof}

\subsection{Existence of weak solutions on $K_c$}

With the bounds given in \S \ref{sec:bound-wCurv} for the wild
curvature terms, it is easy to follow the proof of Theorem
\ref{thm:proof-linearizable} and get the following
\begin{prop} \label{prop:proof-K_c}
  Suppose $L$ is a holomorphic line bundle on $X$ (which can possibly
  be non-linearizable), and suppose $q < s_F^-$ or $q > m -
  s_F^+$. 
  Then, there exists a suitable hermitian metric $g$ on $X$ such that
  the following holds:
  for any $0< c < \infty$, a hermitian metric $\eta_c$ on $L$ can be
  chosen such that, given any plurisubharmonic weight $\chiX$, the
  $L^2$ estimate
  \begin{equation*}
    \norm{S_q\zeta}_{K_c,\eta_c,\chi}^2 + \norm{T_{q-1}^*\zeta}_{K_c,\eta_c,\chi}^2
    \geq \tfrac{\pi}{4} \norm\zeta_{K_c,\eta_c,\chi}^2  
  \end{equation*}
  for all $\zeta \in \smFBCtwo \cap \Dom_{K_c,\eta_c,\chi} T_{q-1}^*$
  is satisfied.
%
\end{prop}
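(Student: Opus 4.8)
The plan is to run the proof of Theorem~\ref{thm:proof-linearizable} once more, the only new ingredient being that the wild curvature terms $\wCurv$ and $\wCurv'_F$ no longer vanish and must be absorbed by means of Lemma~\ref{lem:bounds-wild-curv}; the choices must be arranged to respect the quantifier order $\exists\,g\ \forall\,c\ \exists\,\eta_c\ \forall\,\chiX$ of the statement. First I would fix a constant $M>1$, say $M=2$, and choose the translational invariant hermitian metric $g$ exactly as in Lemma~\ref{lem:murakami-trick} (if $q>m-s_F^+$) or Lemma~\ref{lem:Murakami-trick2} (if $q<s_F^-$) for this $M$; this $g$ depends neither on $c$ nor on $\chiX$. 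With $g$ fixed, Lemma~\ref{lem:bound-tame-part} lets me choose $\Hform_E$ sufficiently positive, via Proposition~\ref{prop:H_bot_R-arbitrary}, so that
\begin{equation*}
  \pi \int_{K_c} e^{-\chiX} \Tr_{g,\eta} \pr_F \paren{\paren{\altH(M)}^\dual (\zeta \otimes \conj\zeta)} \geq \tfrac{\pi}{4} M \norm\zeta_{K_c,\eta,\chi}^2
\end{equation*}
for all $\zeta \in \smFBCtwo$.

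The crucial observation is that this tame estimate depends only on the pointwise comparison $\Hform_E \geq \lambda g_E$ and on the translational-invariant norm bound for the auxiliary operator $\mathcal N$ appearing in that proof, neither of which involves the wild factor of the metric on $L$. Hence the estimate survives for every metric of the form $\etat\etaw$ built from the chosen $\Hform_E$, in particular for each $\eta_c$ constructed below; and since the resulting $\Hform_E$ is translational invariant, it is independent of $c$.

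Next I would fix $\varepsilon_\wildsub>0$ so small that $\varepsilon_\wildsub(q+m)\leq\tfrac{\pi}{4}$, and for each $0<c<\infty$ apply Lemma~\ref{lem:bounds-wild-curv} with this $\varepsilon_\wildsub$ and the already fixed $g,\Hform_E$ to obtain $\delta_c\in\kashf{}(X)$, hence a metric $\eta_c$ on $L$, satisfying
\begin{equation*}
  \abs{\wCurv(\zeta,\zeta)} \leq \varepsilon_\wildsub\, q\, \norm\zeta_{K_c,\eta_c,\chi}^2 \quad\text{and}\quad \abs{\wCurv'_F(\zeta'',\zeta'')} \leq \varepsilon_\wildsub\, m\, \norm\zeta_{K_c,\eta_c,\chi}^2
\end{equation*}
for every weight $\chiX$. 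It then remains to combine: evaluating Corollary~\ref{cor:BK-ineq-with-Murakami} (for $q>m-s_F^+$) or Corollary~\ref{cor:BK-ineq2-with-Murakami} (for $q<s_F^-$) with the metric $\eta_c$, replacing the curvature integral by $\tfrac{\pi}{4}M\norm\zeta_{K_c,\eta_c,\chi}^2$, using $\wtCurv(\zeta,\zeta)\geq 0$ (valid because $\chiX$ is plurisubharmonic and $\dbarb\chiX=\diffb\chiX=0$, just as in the proof of Theorem~\ref{thm:proof-linearizable}), and bounding $\wCurv$ (and $\wCurv'_F$ in the second case) below by the inequalities just obtained, yields
\begin{equation*}
  \norm{S_q\zeta}_{K_c,\eta_c,\chi}^2 + \norm{T_{q-1}^*\zeta}_{K_c,\eta_c,\chi}^2 \geq \paren{\tfrac{\pi}{4} M - \varepsilon_\wildsub(q+m)} \norm\zeta_{K_c,\eta_c,\chi}^2 \geq \tfrac{\pi}{4}\, \norm\zeta_{K_c,\eta_c,\chi}^2
\end{equation*}
for all $\zeta\in\smFBCtwo\cap\Dom T_{q-1}^*$, which is the desired estimate.

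Since the analytic content is already packaged in the preceding lemmas, the main difficulty here is organizational, namely ensuring that a single $g$ and a single $\Hform_E$ serve all $c$ at once, while $\eta_c$ is allowed to vary with $c$ only through the wild factor $\delta_c$. The one point that deserves genuine care is verifying that the tame estimate of Lemma~\ref{lem:bound-tame-part} is unaffected by this later, $c$-dependent change of the metric; granting that, the combination above goes through verbatim for every plurisubharmonic weight $\chiX$.
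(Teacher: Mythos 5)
Your proposal is correct and follows essentially the same route as the paper's own proof: the same choice of $g$ with $M=2$ via Murakami's trick, the same choice of $\Hform_E$ via Lemma \ref{lem:bound-tame-part}, the same $\varepsilon_\wildsub(q+m)\leq\tfrac{\pi}{4}$ condition for Lemma \ref{lem:bounds-wild-curv}, and the same final combination. Your explicit remark that the tame estimate survives the $c$-dependent wild factor $\delta_c$ is a point the paper leaves implicit, but it is the right thing to check and your justification is sound.
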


\begin{proof}

  Choose the translational invariant hermitian metric $g$ as described in
  the proof of Lemma \ref{lem:murakami-trick} for $q > m - s_F^+$ or Lemma
  \ref{lem:Murakami-trick2} for $q < s_F^-$, with $M = 2$.
  For the hermitian form $\Hform$ associated to $L$, choose $\Hform_E$
  as described in the proof of Lemma \ref{lem:bound-tame-part}.
  These choices are independent of $c$.
  
  Consider $K_c$ for some fixed $0 < c < \infty$.
  Take any $\varepsilon_\wildsub > 0$ such that
  \begin{equation} \label{eq:eps-choice} \tag{$*$}
    \varepsilon_\wildsub (q + m) \leq \frac{\pi}{4}
  \end{equation}
  and choose $\delta_c \in \kashf{}(X)$ according to Lemma
  \ref{lem:bounds-wild-curv} such that, for any given weight $\chiX$,
  the inequalities (\ref{eq:bound-wCurv}) and
  (\ref{eq:bound-tr-wCurv}) hold under the induced $L^2$-norm
  $\norm\cdot_{K_c,\eta_c,\chi}$.

  By the choices of the metrics, the conclusion of Corollary
  \ref{cor:BK-ineq-with-Murakami} when $q > m - s_F^+$ or Corollary
  \ref{cor:BK-ineq2-with-Murakami} when $q < s_F^-$, as well as that
  of Lemma \ref{lem:bound-tame-part}, holds for all $\zeta = \zeta'
  + \zeta'' \in \smFBCtwo \cap \Dom_{K_c,\eta_c,\chi} T_{q-1}^*$,
  where $\zeta' \in \smfb 1{q-1}(\cl K_c;L) \cap
  \Dom_{K_c,\eta_c,\chi}^{(1,q-1)} \dbarf^*$
  and $\zeta'' \in \smfb 0q(\cl K_c;L)$.

  Since $\chiX$ is plurisubharmonic, $\wtCurv(\zeta,\zeta) \geq 0$ for
  all $\zeta \in \smFBCtwo$ as in the proof of Theorem
  \ref{thm:proof-linearizable}.

  As a result, from Corollary \ref{cor:BK-ineq-with-Murakami} or
  \ref{cor:BK-ineq2-with-Murakami} as well as Lemma
  \ref{lem:bound-tame-part}, one obtains
  \begin{align*}
      &\phantom{\geq} \norm{S_q\zeta}_{K_c,\eta_c,\chi}^2 +
      \norm{T_{q-1}^*\zeta}_{K_c,\eta_c,\chi}^2 \\
      &\geq
      \begin{cases}
        \tfrac{\pi}{2} \norm\zeta_{K_c,\eta_c,\chi}^2 +
        \wCurv(\zeta,\zeta) & \text{for }q > m - s_F^+ \\
        \tfrac{\pi}{2} \norm\zeta_{K_c,\eta_c,\chi}^2 +
        \wCurv'_F(\zeta'',\zeta'') + \wCurv(\zeta,\zeta) & \text{for
        }q < s_F^-
      \end{cases}
      \\
      &
      \begin{aligned}[t]
        &\geq
        \tfrac{\pi}{2} \norm\zeta_{K_c,\eta_c,\chi}^2 - \eps_\wildsub
        \paren{m + q} \norm\zeta_{K_c,\eta_c,\chi} && 
        \text{\parbox{4cm}{\small by (\ref{eq:bound-wCurv}) and
            (\ref{eq:bound-tr-wCurv}), and $\eps_\wildsub q <
            \eps_\wildsub (m+q)$}}
        \\
        &\geq \tfrac{\pi}{4} \norm\zeta_{K_c,\eta_c,\chi}^2 &&
        \text{by (\ref{eq:eps-choice})} \; .
      \end{aligned}
  \end{align*}
  This gives the required $L^2$ estimate.
\end{proof}

Since, for any $\psi \in \kashf{0,q}(X;L)$, one has $\psi|_{K_c} \in
\hilbFB 0q(K_c;L)$ (unweighted) for any $0 < c < \infty$, it follows
the following corollary of Propositions
\ref{prop:estimate-imply-strong-sol} and \ref{prop:proof-K_c}.
\begin{cor} \label{cor:sol-on-K-nu}
  Consider the exhaustive sequence $\seq{K_\nu}_{\nu \in \Nnum_{>0}}$ of
  relatively compact open subsets of $X$. 
  Suppose $q < s_F^-$ or $q > m - s_F^+$.
  Then one can choose a suitable hermitian metric $g$ on $X$ and a
  sequence of hermitian metrics $\seq{\eta_\nu}_{\nu \in \Nnum_{>0}}$
  on $L$ as in Proposition \ref{prop:proof-K_c} such that, for any
  $\psi \in \kashf{0,q}(X;L) \cap \ker\dbar$, 
  there exists a sequence of solutions $\seq{\xi'_\nu}_{\nu \in \Nnum_{>0}}$
  such that $\xi'_\nu \in \hilbbsp{0,q-1}_{\eta_\nu}(K_\nu;L)$ (unweighted) and
  $\dbar\xi'_\nu = \psi|_{K_\nu}$ in $\hilbFB 0q_{\eta_\nu}(K_\nu;L)$.
\end{cor}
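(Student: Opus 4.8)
The plan is to recognize the statement as the instantiation of Propositions~\ref{prop:proof-K_c} and~\ref{prop:estimate-imply-strong-sol} at the finite levels $c = \nu$, $\nu \in \Nnum_{>0}$, with the trivial weight. First I would invoke Proposition~\ref{prop:proof-K_c}: since $q < s_F^-$ or $q > m - s_F^+$, it produces a \emph{single} translational invariant hermitian metric $g$ on $X$ --- one that serves every finite $c$ at once --- together with, for each $c$, a hermitian metric $\eta_c$ on $L$ (obtained from a choice of $\delta_c \in \kashf{}(X)$ via the Weak $\diff\dbar$-Lemma and a sufficiently positive $\Hform_E$) such that
\begin{equation*}
  \norm{S_q\zeta}_{K_c,\eta_c,\chi}^2 + \norm{T_{q-1}^*\zeta}_{K_c,\eta_c,\chi}^2 \geq \tfrac{\pi}{4} \norm\zeta_{K_c,\eta_c,\chi}^2
\end{equation*}
for all $\zeta \in \smFBCtwo \cap \Dom_{K_c,\eta_c,\chi} T_{q-1}^*$ and any plurisubharmonic weight $\chiX$. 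I would then set $\eta_\nu := \eta_c$ for $c = \nu$.

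Next I would take the weight to be trivial, $\chiX \equiv 0$, which is (trivially) plurisubharmonic, so that the displayed inequality is precisely the a priori estimate~\eqref{eq:a_priori_estimate} demanded by Proposition~\ref{prop:estimate-imply-strong-sol}, with $C = \tfrac{\pi}{4}$ and $c = \nu < \infty$. It then suffices to check that $\psi$ meets the hypotheses of that proposition at each level. Fixing $\psi \in \kashf{0,q}(X;L) \cap \ker\dbar$, the restriction $\psi|_{K_\nu}$ lies in $\kashf{0,q}(K_\nu;L)$ and satisfies $\dbar(\psi|_{K_\nu}) = 0$; moreover, since $\cl K_\nu$ is compact and $\psi$ is smooth, $\psi|_{K_\nu}$ is bounded, hence has finite unweighted $L^2$-norm, so $\psi|_{K_\nu} \in \hilbbsp{0,q}_{\eta_\nu}(K_\nu;L)$. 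Applying Proposition~\ref{prop:estimate-imply-strong-sol} with $c = \nu$ and trivial weight, for each $\nu$ in turn, yields $\xi'_\nu \in \hilbbsp{0,q-1}_{\eta_\nu}(K_\nu;L)$ with $\dbar\xi'_\nu = \psi|_{K_\nu}$ in $\hilbFB 0q_{\eta_\nu}(K_\nu;L)$; collecting these gives the asserted sequence.

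In truth there is no analytic obstacle in this corollary --- the two cited propositions carry the entire burden, and the remaining argument is bookkeeping. The one point deserving a moment's care is the interplay of the two quantifiers in Proposition~\ref{prop:proof-K_c}: the metric $g$ is chosen \emph{before} $c$ and hence can be fixed uniformly across all levels $\nu$, whereas the bundle metrics $\eta_\nu$ genuinely vary with $\nu$, because the Weak $\diff\dbar$-Lemma only controls the wild curvature on the relatively compact set $K_\nu$. This $\nu$-dependence of the $L^2$ structures is exactly the reason the solutions $\xi'_\nu$ are merely local and will have to be reconciled over the exhaustion $\seq{K_\nu}_{\nu \in \Nnum_{>0}}$ by the Runge-type approximation carried out subsequently.
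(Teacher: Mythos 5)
Your proposal is correct and follows exactly the paper's route: the paper derives the corollary in one sentence from Propositions \ref{prop:proof-K_c} and \ref{prop:estimate-imply-strong-sol}, using precisely the observation that $\psi|_{K_\nu}$ has finite unweighted $L^2$-norm on the relatively compact $K_\nu$ and taking the trivial (plurisubharmonic) weight. Your remarks on the quantifier order ($g$ fixed once, $\eta_\nu$ varying with $\nu$) accurately reflect the structure of Proposition \ref{prop:proof-K_c} and why the subsequent Runge-type approximation is needed.
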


\begin{remark}
  Since $\chiX$ has to be smooth on a neighborhood of $\cl K_c$ (as
  required by \cite{Hoermander-L2}*{Prop.~2.1.1} so that $\smFBCtwo
  \cap \Dom T_{q-1}^*$ is dense in $\Dom T_{q-1}^* \cap \Dom S_q$
  under the suitable graph norm), if $\psi \in \kashf{0,q}(K_c;L)$,
  there may not exist such $\chiX$ such that $\norm\psi_{K_c,\chi} <
  \infty$. 
  To avoid technical difficulty, the author does not attempt to solve
  the $\dbar$-equation for any $\psi \in \kashf{0,q}(K_c;L)$ such that
  $\dbar\psi = 0$
  by means of $L^2$ estimates directly.
\end{remark}

\subsection{A Runge-type approximation}
\label{sec:runge-type-approx}

This section is devoted to proving a Runge-type approximation which is
required to construct a global solution to the equation $\dbar \xi =
\psi$ from the solutions on $K_\nu$'s given in Corollary
\ref{cor:sol-on-K-nu}.

In what follows, $q$ is assumed to be $0 < q < s_F^-$ or $q > m - s_F^+$,
and the hermitian metric $g$ as well as the family of hermitian
metrics $\seq{\eta_c}_{c > 0}$ as asserted by Proposition
\ref{prop:proof-K_c} is fixed.
Then, according to the choices of the $\eta_c$'s in the proof of
Proposition \ref{prop:proof-K_c}, for any $c', c > 0$, one has
\begin{equation*}
  \eta_c = \eta_{c'} e^{2\Re(\delta_{c'} - \delta_c)} =: \eta_{c'}
  e^{\Deltab_{c'c}} \; .
\end{equation*}
Note that $e^{\Deltab_{c'c}} > 0$ on $X$.
It is understood that the hermitian metric $\eta_c$ on $L$ is chosen
when the $L^2$-norm on $K_c$ is considered, so
write $\hilbFB 0q_{\eta_c,\chi}(K_c;L)$ as $\hilbFB 0q_{\chi}(K_c;L)$,
 $\inner\cdot\cdot_{K_c,\eta_c,\chi}$ as $\inner\cdot\cdot_{K_c,\chi}$ and so on
 to simplify notation.
When the weight $\chiX$ is absent from the notation,
e.g.~$\hilbFB 0q(K_c;L)$ or $\inner\cdot\cdot_{K_c}$, it is understood that
the corresponding object is unweighted, i.e.~$\chiX = 0$.

For any finite $c' > c > 0$ and for any $\Psi \in \hilbFB 0{q-1}(K_c;L)$, if
$\Psi$ is extended by zero to a section in $\hilbFB 0{q-1}(K_{c'};L)$,
then it follows that
\begin{equation}
  \label{eq:inner-prod-change}
  \inner{\zeta}{\Psi}_{K_c} =\inner{\zeta}{\Psi \addD{e^{\Deltab_{c'c}}}}_{K_{c'}}
\end{equation}
for any $\zeta \in \hilbFB 0{q-1}(K_{c'};L)$.

Define $\parres{\ker_{K_{c'}} T_{q-1}}_{K_c}$ to be
the image of $\ker_{K_{c'}} T_{q-1}$ under the restriction map
$\hilbFB 0{q-1}(K_{c'};L) \to \hilbFB 0{q-1}(K_c;L)$.
Note that $T_{q-1}$ commutes with the restriction map (as $c >
0$), so one has
\begin{equation*}
  \parres{\ker_{K_{c'}} T_{q-1}}_{K_c} \subset
  \ker_{K_c} T_{q-1} \; .
\end{equation*}


The following proof of the required Runge-type approximation is 
an analogue of the one for strongly pseudoconvex manifolds
given in \cite{Hoemander}*{Lemma 4.3.1}. 



\begin{prop} \label{prop:runge-type-approx}
  Suppose $0 < q < s_F^-$ or $q > m - s_F^+$, and $g$ and
  $\eta_c$'s are chosen according to Proposition \ref{prop:proof-K_c}. 
  Then, for any finite $c' > c > 0$,
  the closure of $\parres{\ker_{K_{c'}} T_{q-1}}_{K_c}$ in
  $\hilbbsp{0,q-1}(K_c;L)$ is $\ker_{K_c} T_{q-1}$.
  In other words, $\parres{\ker_{K_{c'}} T_{q-1}}_{K_c}$ is dense
  in $\ker_{K_c} T_{q-1}$. 
\end{prop}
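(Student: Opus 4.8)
The plan is to prove this by a Hahn--Banach duality argument, following the strategy of \cite{Hoemander}*{Lemma 4.3.1}. Since $T_{q-1}$ is a closed operator its kernel $\ker_{K_c} T_{q-1}$ is closed, and $\parres{\ker_{K_{c'}} T_{q-1}}_{K_c} \subset \ker_{K_c} T_{q-1}$ already; hence it suffices to show that every continuous linear functional on $\hilbbsp{0,q-1}(K_c;L)$ annihilating $\parres{\ker_{K_{c'}} T_{q-1}}_{K_c}$ also annihilates $\ker_{K_c} T_{q-1}$. By the Riesz representation theorem such a functional is $\inner{\cdot}{\Psi}_{K_c}$ for some $\Psi \in \hilbbsp{0,q-1}(K_c;L)$, so I must show: if $\inner{\tilde\zeta}{\Psi}_{K_c} = 0$ for all $\tilde\zeta \in \parres{\ker_{K_{c'}} T_{q-1}}_{K_c}$, then $\inner{\zeta}{\Psi}_{K_c} = 0$ for all $\zeta \in \ker_{K_c} T_{q-1}$. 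I work throughout with the unweighted spaces ($\chiX = 0$), which is admissible since each $K_c$ with $0<c<\infty$ is relatively compact.

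First I would transport the hypothesis to $K_{c'}$. Extending $\Psi$ by zero and using (\ref{eq:inner-prod-change}), the element $r^*\Psi := \Psi\,e^{\Deltab_{c'c}} \in \hilbbsp{0,q-1}(K_{c'};L)$, which is supported in $\cl K_c$, satisfies $\inner{\tilde\zeta}{r^*\Psi}_{K_{c'}} = \inner{\tilde\zeta|_{K_c}}{\Psi}_{K_c} = 0$ for every $\tilde\zeta \in \ker_{K_{c'}} T_{q-1}$; that is, $r^*\Psi \in (\ker_{K_{c'}} T_{q-1})^\perp = \cl{\im T_{q-1}^*}$. Because $g$ and $\eta_{c'}$ are chosen as in Proposition \ref{prop:proof-K_c}, the $L^2$ estimate holds on $K_{c'}$, so Theorem \ref{thm:existence-weak-solution} shows that $\im T_{q-1}^*$ is closed and supplies $w \in \cl{\im T_{q-1}} \cap \Dom T_{q-1}^*$ with $T_{q-1}^* w = r^*\Psi$. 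Since $\cl{\im T_{q-1}} = \im T_{q-1} = \ker S_q$, this $w$ also satisfies $S_q w = 0$. Crucially, as $r^*\Psi$ vanishes on the annulus $A := K_{c'} \setminus \cl K_c$, the dual element obeys $T_{q-1}^* w = 0$ there as well.

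To conclude, I would fix $\zeta \in \ker_{K_c} T_{q-1}$; by interior elliptic regularity and the density of forms smooth up to the boundary, I may assume $\zeta \in \smfb 0{q-1}(\cl K_c;L)$ and choose a smooth extension $\hat\zeta \in \smfb 0{q-1}(\cl K_{c'};L)$ with $\hat\zeta|_{K_c} = \zeta$. Then $\hat\zeta \in \Dom T_{q-1}$, and the defining property of the adjoint together with (\ref{eq:inner-prod-change}) gives $\inner{\zeta}{\Psi}_{K_c} = \inner{\hat\zeta}{T_{q-1}^* w}_{K_{c'}} = \inner{T_{q-1}\hat\zeta}{w}_{K_{c'}} = \inner{\dbar\hat\zeta}{w}_{K_{c'}}$. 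Since $\dbar\hat\zeta = \dbar\zeta = 0$ on $K_c$, only the integral over $A$ survives, so $\inner{\zeta}{\Psi}_{K_c} = \inner{\dbar\hat\zeta}{w}_{A}$.

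The whole argument therefore hinges on the last, and hardest, point: that $w \equiv 0$ on the annulus $A$. On $A$ the element $w$ satisfies the elliptic system $S_q w = 0$ and $T_{q-1}^* w = 0$, hence is smooth there (indeed real-analytic, the metric $\eta_c$ being real-analytic by the remark following Lemma \ref{lem:weak-ddbarlemma}). I expect to prove the vanishing by propagating it inward from the outer boundary $\bdry K_{c'}$: the minimality of $w$ in $\cl{\im T_{q-1}}$ together with the boundary condition built into $\Dom T_{q-1}^*$ should force $w$ to vanish near $\bdry K_{c'}$, and unique continuation for the real-analytic harmonic system then yields $w \equiv 0$ on all of $A$. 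This step is where the pseudoconvexity of the sublevel sets of the common plurisubharmonic exhaustion $\varphi$ (so that no component of $A$ can trap $w$) is essential, exactly as in the strongly pseudoconvex situation of \cite{Hoemander}*{Lemma 4.3.1}. Granting $w \equiv 0$ on $A$, the displayed identity gives $\inner{\zeta}{\Psi}_{K_c} = 0$, completing the duality argument and hence the asserted density.
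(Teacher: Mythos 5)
Your setup---Hahn--Banach duality, zero-extension of $\Psi$ to $K_{c'}$, and solving the adjoint equation $T_{q-1}^*w = \Psi e^{2\Re(\delta_{c'}-\delta_c)}$---matches the paper's, but the argument collapses precisely at the step you defer to the end: the vanishing of $w$ on the annulus $A = K_{c'}\setminus\cl K_c$. This cannot be "granted", because it is in general false for the single unweighted minimal solution. On $A$ the conditions $S_q w = 0$ and $T_{q-1}^* w = 0$ form an elliptic system with an infinite-dimensional local solution space, and the only information you have at $\bdry K_{c'}$ is the free boundary condition $(\diff\varphi)^\dual \ctrct w = 0$ built into $\Dom T_{q-1}^*$, which annihilates certain components of $w$ at the boundary but nothing like its full Cauchy data; there is therefore no vanishing from which to launch unique continuation. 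Likewise, "minimality of $w$ in $\cl{\im T_{q-1}}$" is the global orthogonality $w \perp \ker T_{q-1}^*$ and yields no pointwise vanishing anywhere. If the minimal solution of the adjoint equation with data supported in $\cl K_c$ were automatically supported in $\cl K_c$, Runge-type density would hold trivially on arbitrary pairs of domains, which it does not.

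The missing ingredient---and the actual content of \cite{Hoemander}*{Lemma 4.3.1}, which the paper follows---is to solve the adjoint equation not once but for a sequence of weights $\chiX_\nu = \chiR_\nu\circ\varphi$ with $\chiR_\nu$ smooth, convex, increasing, $\chiR_\nu \equiv 0$ on $(-\infty,c]$ and $\chiR_\nu(x)\nearrow+\infty$ for every $x>c$. Since the $L^2$ estimate of Proposition \ref{prop:proof-K_c} holds for every such plurisubharmonic weight with the \emph{same} constant $\tfrac{\pi}{4}$, Theorem \ref{thm:existence-weak-solution}~(\ref{item:solve-T-star-eq}) produces solutions $\widetilde\Xi^\nu$ of $T_{q-1}^{*,\nu}\widetilde\Xi^\nu = \Psi e^{2\Re(\delta_{c'}-\delta_c)}e^{\chiX_\nu}$ satisfying
\begin{equation*}
  \tfrac{\pi}{4}\int_{K_{c'}}\abs{\Xi^\nu}_{g,\eta_{c'}}^2 e^{\chiX_\nu}
  \;\leq\; \int_{K_c}\abs{\Psi}_{g,\eta_c}^2\, e^{2\Re(\delta_{c'}-\delta_c)} e^{\chiX_\nu} \; ,
  \qquad \Xi^\nu := \widetilde\Xi^\nu e^{-\chiX_\nu} \; ,
\end{equation*}
and the right-hand side is independent of $\nu$ because $\Psi$ is supported where $\chiX_\nu = 0$. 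Hence $\seq{\Xi^\nu}_\nu$ is bounded in the unweighted norm, a weakly convergent subsequence has a limit $\Xi$, and the blow-up of the weights on $\set{\varphi > c}$ forces $\Xi \equiv 0$ there. With this $\Xi$ in place of your $w$, your concluding pairing identity holds for all $\zeta\in\Dom_{K_{c'}}T_{q-1}$ and passes to all of $\Dom_{K_c}T_{q-1}$ by density of $\smfb 0{q-1}(\cl K_c;L)$ in the graph norm, with no need to extend $\zeta$ smoothly across $\bdry K_c$.
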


\begin{proof}
  By virtue of the Hahn-Banach theorem, it suffices to show that for
  every $\Psi \in \hilbbsp{0,q-1}(K_c;L)$, if the induced bounded
  linear functional
  \begin{equation*}
    \hilbbsp{0,q-1}(K_c;L) \ni \zeta \mapsto \inner\zeta \Psi_{K_c}
  \end{equation*}
  vanishes on $\parres{
    \ker_{K_{c'}} T_{q-1}}_{K_c}$, then it also vanishes on
  $\ker_{K_c} T_{q-1}$. \footnote{If there
  exists  
  $\zeta \in \ker_{K_c} T_{q-1}$ which does not lie in the closure of
  $\parres{
    \ker_{K_{c'}}
    T_{q-1}}_{K_c}$ in $\hilbbsp{0,q-1}(K_c;L)$, then the Hahn-Banach theorem
  asserts that there is a bounded linear functional $\Lambda$ such
  that $\parres{
    \ker_{K_{c'}} T_{q-1}}_{K_c} \subset \ker \Lambda$ and
  $\Lambda\zeta = 1$.}

  Suppose that $\Psi \in \hilbFB 0{q-1}(K_c;L)$ satisfies the above assumption.
  Extend $\Psi$ by zero to $K_{c'}$ as a section in
  $\hilbbsp{0,q-1}(K_{c'};L)$.
  Now it suffices to show that there
  exists $\Xi \in \hilbFBtwoLong[']$ such that $\Xi \equiv 0$ on
  $K_{c'}\setminus \cl K_c$ and
  \begin{equation} \label{eq:solve-T-star-eq-aim} \tag{$\dagger$}
    \inner{\zeta}{\Psi
      \addD{e^{\Deltab_{c'c}}}}_{K_{c'}} = \inner{T_{q-1} \zeta}{\Xi}_{K_{c'}}
  \end{equation}
  for any $\zeta \in \Dom_{K_{c'}} T_{q-1}$, which then implies that
  \begin{equation} \label{eq:solve-T-star-eq} \tag{$\ddagger$}
    \inner{\zeta}{\Psi}_{K_c} = \inner{T_{q-1} \zeta}{\Xi e^{-\Deltab_{c'c}}}_{K_c}
  \end{equation}
  for any $\zeta \in \Dom_{K_{c'}} T_{q-1}$ due to (\ref{eq:inner-prod-change}).
  \addD{The equality (\ref{eq:solve-T-star-eq})
    holds true for $\zeta \in \smfb 0{q-1}_0(K_{c'};L)$ in particular,
    and $\smfb 0{q-1}(\cl K_c; L)$ is dense in $\Dom_{K_c} T_{q-1}$
    under the graph norm $\sqrt{\norm\zeta_{K_c}^2 + \norm{T_{q-1}\zeta}_{K_c}^2}$
    by \cite{Hoermander-L2}*{Prop.~2.1.1}, so} (\ref{eq:solve-T-star-eq})
  also holds true for $\zeta \in \Dom_{K_c} T_{q-1}$. 
%
%
  It follows that
  \begin{equation*}
    \inner{\zeta}{\Psi}_{K_c} = \inner{T_{q-1} \zeta}{\Xi
      e^{-\Deltab_{c'c}}}_{K_c} = 0
  \end{equation*}
  for all 
  $\zeta \in \ker_{K_c} T_{q-1} \subset \Dom_{K_c} T_{q-1}$ as
  required.
  It remains to show the existence of such $\Xi$.

  Take a sequence of smooth convex increasing functions $\chiR_\nu
  \colon \fieldR \to \fieldR$ such that $\chiR_\nu(x) = 0$ for all $x
  \leq c$, and $\chiR_\nu(x) \nearrow + \infty$ as $\nu
  \tendsto \infty$ for every $x > c$. Note that $\chiR_\nu \geq 0$ for any $\nu \geq 0$
  by such choice. Set $\chiX_\nu := \chiR_\nu \circ \varphi$ as
  before. 
  A sequence of weighted norms $\norm\cdot_{c',\nu} := \norm\cdot_{K_{c'},
    \chi_\nu}$ on $K_{c'}$ is then defined. 
  Let the
  corresponding inner products, Hilbert spaces and $\Dom$ also be
  distinguished by using the subscripts
  $c',\nu$, 
  and the corresponding adjoint of $T_{q-1}$ 
  by $T_{q-1}^{*,\nu}$.

  For any $q$ in the given range, the $L^2$ estimate
  in Proposition \ref{prop:proof-K_c} holds under each of the above weighted
  norms with $T_{q-1}^*$ replaced by $T_{q-1}^{*,\nu}$. 
  Since $\inner\zeta{\Psi \addD{e^{\Deltab_{c'c}}}e^{\chiX_\nu}}_{c',\nu} =
  \inner\zeta{\Psi\addD{e^{\Deltab_{c'c}}}}_{K_{c'}}$ and the right
  hand side vanishes for all $\zeta
  \in \addD{\ker_{K_{c'}} T_{q-1} = \ker_{c',\nu} T_{q-1}}$ by
  the assumption on $\Psi$, it follows that
  \begin{equation*}
    \Psi \addD{e^{\Deltab_{c'c}}} e^{\chiX_\nu}
    \in \paren{\ker_{c',\nu} T_{q-1}}^\bot = \cl{\im_{c',\nu}
      T_{q-1}^{*,\nu}} \; .
  \end{equation*}
  Given the $L^2$ estimate, Theorem
  \ref{thm:existence-weak-solution}~(\ref{item:solve-T-star-eq}) then
  asserts that there exists $\wXi^\nu \in \Dom_{c',\nu} T_{q-1}^{*,\nu}$
  such that $T_{q-1}^{*,\nu}\wXi^\nu = \Psi e^{\Deltab_{c'c}} e^{\chiX_\nu}$. 
%
  Therefore, one has
  \begin{equation*}
    \begin{aligned}
      \inner\zeta {\Psi e^{\Deltab_{c'c}} e^{\chiX_\nu}}_{c',\nu} &=
      \inner\zeta {T_{q-1}^{*,\nu} \wXi^\nu}_{c',\nu} \\ &=
      \inner{T_{q-1}\zeta} {\wXi^\nu}_{c',\nu} = \inner{T_{q-1}\zeta}
      {\wXi^\nu e^{-\chiX_\nu}}_{K_{c'}}
    \end{aligned}  
  \end{equation*}
  for all $\nu \in \Nnum$ and for all $\zeta \in \Dom_{c',\nu} T_{q-1}
  = \Dom_{K_{c'}} T_{q-1}$.
  By defining $\Xi^\nu := \wXi^\nu
  e^{-\chiX_\nu}$, one obtains
  \begin{equation} \tag{$*$} \label{eq:dbarf-star-eq}
    \inner\zeta {\Psi e^{\Deltab_{c'c}}}_{K_{c'}} = \inner{T_{q-1}\zeta}
    {\Xi^\nu}_{K_{c'}} \; .
  \end{equation}
  Moreover, notice that the constant in the $L^2$ estimate is
  independent of $\nu$ (which is chosen to be $\frac{\pi}{4}$ in
  Proposition \ref{prop:proof-K_c}).
  The estimate on the solution $\wXi^\nu$ from Theorem
  \ref{thm:existence-weak-solution}~(\ref{item:solve-T-star-eq}) then
  implies that
  \begin{equation} \tag{$**$} \label{eq:get-bound}
    \tfrac{\pi}{4} \int_{K_{c'}} \abs{\Xi^\nu}_{g,\eta_{c'}}^2 e^{\chiX_\nu}
    \leq \int_{K_{c'}} \abs{\Psi e^{\Deltab_{c'c}}}_{g,\eta_{c'}}^2
    e^{\chiX_\nu}
    = \int_{K_c} \abs{\Psi}_{g,\eta_c}^2 e^{\Deltab_{c'c}} e^{\chiX_\nu}
    \; ,
  \end{equation}
  where the last equality is due to the fact that $\Psi$ vanishes on
  $K_{c'} \setminus \cl K_c$.
  Since $\chiR_\nu(\varphi)$ is independent of $\nu$ when $\varphi
  \leq c$, the integral on the right hand side is independent of
  $\nu$, so the left hand side is a bounded sequence in $\nu$.
  This in turn implies that there exists a subsequence of
  $\seq{\Xi^\nu}_{\nu\in\Nnum}$ which converges to some $\Xi \in
  \hilbFBtwoLong[']$ (unweighted) in the weak topology.
  From (\ref{eq:get-bound}), since $\chiR_\nu(\varphi) \nearrow
  +\infty$ for $\varphi > c$, it follows that $\Xi \equiv 0$ when
  $\varphi > c$, i.e.~on $K_{c'} \setminus \cl K_c$.
  Moreover, from (\ref{eq:dbarf-star-eq}) it follows that
  (\ref{eq:solve-T-star-eq-aim}) holds
  for all $\zeta\in \Dom_{K_{c'}} T_{q-1}$. 
  This is what is desired.
\end{proof}

\subsection{Proof of Theorem $\text{\ref{thm:main_thm}}$ for general $L$}
\label{sec:proof-special-q}

First notice that,
if $q = 0 < s_F^-$, then the $L^2$ estimate in Proposition
\ref{prop:proof-K_c} holds when the metrics are chosen suitably, and
thus for any $\psi \in \kashf{}(X;L) \cap \ker\dbar$ one has
\begin{equation*}
  0 = \norm{\dbar\psi}_{K_c}^2 \geq \tfrac{\pi}{4} \norm\psi_{K_c}^2
\end{equation*}
(note that $T_{-1}^*\zeta = 0$ for all $\zeta \in \smform{}(\cl K_c;L)$).
This means that $\psi|_{K_c} = 0$ for any $c > 0$, and thus $\psi = 0$
on $X$.
Therefore, one has the following
\begin{thm}
  If $s_F^- > 0$, one has $H^0(X,L) = 0$.
\end{thm}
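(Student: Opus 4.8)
The plan is to obtain this statement as the degenerate case $q = 0$ of the $L^2$ estimate already established in Proposition~\ref{prop:proof-K_c}. First I would recast the assertion: by the Kazama--Dolbeault isomorphism (Theorem~\ref{rem:Kazama-Dolbeault}), $H^0(X,L) \isom \Gamma(X,\holo_X(L))$ is precisely the kernel of $\dbar \colon \kashf{}(X;L) \to \kashf{0,1}(X;L)$, so it suffices to show that every $\psi \in \kashf{}(X;L)$ with $\dbar\psi = 0$ vanishes identically on $X$.

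Since $s_F^- > 0$, the index $q = 0$ satisfies $q < s_F^-$, so Proposition~\ref{prop:proof-K_c} applies for each finite $c > 0$. The key observation is that when $q = 0$ the source space $\hilbbsp{0,q-1}_{c,\chi} = \hilbbsp{0,-1}_{c,\chi}$ is trivial; hence $T_{-1} = 0$ and its Hilbert space adjoint $T_{-1}^*$ vanishes identically, with $\Dom T_{-1}^*$ equal to the whole space. The estimate of Proposition~\ref{prop:proof-K_c} therefore collapses to
\begin{equation*}
  \norm{\dbar\zeta}_{K_c,\eta_c,\chi}^2 = \norm{S_0\zeta}_3^2 \geq \tfrac{\pi}{4}\norm{\zeta}_{K_c,\eta_c,\chi}^2
\end{equation*}
for all $\zeta \in \smFBCtwo$, and for $q = 0$ the space $\smFBCtwo$ is simply $\smbform{0,0}(\cl K_c;L)$, the smooth $L$-valued functions on $\cl K_c$ (no compact-support condition enters, since $\Dom T_{-1}^*$ is everything and $\cl K_c$ is compact).

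Next I would apply this estimate to $\zeta = \psi|_{\cl K_c}$. Being holomorphic, $\psi$ is smooth, so $\psi|_{\cl K_c} \in \smbform{0,0}(\cl K_c;L) = \smFBCtwo$ with finite norm (taking the unweighted norm $\chiX = 0$, which is admissible). As $\dbar\psi = 0$, the left-hand side vanishes, forcing $\norm{\psi}_{K_c} = 0$, i.e.\ $\psi|_{K_c} = 0$. Letting $c$ run through the exhaustion $\seq{K_c}_{c>0}$ yields $\psi \equiv 0$ on $X$, whence $H^0(X,L) = 0$. I do not expect any genuine obstacle here: all the analytic content is carried by Proposition~\ref{prop:proof-K_c}, and the only points needing care are the bookkeeping that $T_{-1}^* = 0$ (so that the estimate truly becomes a lower bound on $\norm{\dbar\zeta}$) and the observation that a global holomorphic section, restricted to the relatively compact $\cl K_c$, automatically lies in the relevant function space.
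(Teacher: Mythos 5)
Your proposal is correct and coincides with the paper's own argument: the paper likewise observes that for $q = 0 < s_F^-$ the estimate of Proposition~\ref{prop:proof-K_c} applies with $T_{-1}^* = 0$, giving $0 = \norm{\dbar\psi}_{K_c}^2 \geq \tfrac{\pi}{4}\norm{\psi}_{K_c}^2$ and hence $\psi|_{K_c} = 0$ for every $c$, so $\psi \equiv 0$. Your extra bookkeeping about $\smFBCtwo$ and the unweighted norm matches what the paper leaves implicit.
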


Assume $0 < q < s_F^-$ or $q > m - s_F^+$ in what follows.
The metrics $g$ and $\eta_\nu$'s from Corollary \ref{cor:sol-on-K-nu}
are fixed for this section.
Again, write $\hilbFB 0q_{\eta_\nu,\chi}(K_\nu;L)$ as $\hilbFB
0q_{\chi}(K_\nu;L)$ and so on, and notations like $\hilbFB 0q(K_c;L)$ or
$\norm\cdot_{K_c}$ are understood as unweighted objects, i.e.~$\chiX =
0$.

For every integer $\nu \geq 1$, as $\delta_{\nu + 1} - \delta_\nu$ is
smooth on $X$ and $\cl K_{\nu+1}$ is compact, there exists a constant
$M_{\nu+1}' \geq 1$ such that
\begin{equation}
  \label{eq:expand-constant}
  \norm\zeta_{K_\nu} \leq M_{\nu+1}' \norm\zeta_{K_{\nu+1}}
\end{equation}
for all $\zeta \in \hilbFB 0q(K_{\nu+1};L)$. 
Define also $M_1 := 1$ and $M_\nu := \prod_{k
= 2}^\nu M_k'$ for $\nu \geq 2$.

Proposition \ref{prop:runge-type-approx} is used to complete the proof
of Theorem \ref{thm:main_thm}. 
The following argument is adopted from \cite{Grauert&Remmert}*{Ch.~IV, \S 1,
  Thm.~7}.

\begin{thm}
  Suppose $0 < q < s_F^-$ or $q > m - s_F^+$. 
  Then one has $H^q(X,L) = 0$ for any $q$ in the given range.
\end{thm}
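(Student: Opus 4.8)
The plan is to glue the local solutions furnished by Corollary \ref{cor:sol-on-K-nu} into a single global solution, following the classical patching argument for Theorem~B on Stein spaces in \cite{Grauert&Remmert}*{Ch.~IV, \S 1, Thm.~7}. Fix $\psi \in \kashf{0,q}(X;L) \cap \ker\dbar$, together with the hermitian metric $g$, the metrics $\seq{\eta_\nu}_{\nu\in\Nnum_{>0}}$ and the weak solutions $\xi'_\nu \in \hilbbsp{0,q-1}(K_\nu;L)$ with $\dbar\xi'_\nu = \psi$ on $K_\nu$ supplied by that corollary. By the regularity statement in Remark \ref{rem:regularity} and the Kazama--Dolbeault isomorphism (Theorem \ref{rem:Kazama-Dolbeault}), it is enough to manufacture one weak solution $\xi$, locally $L^2$ on $X$, of $\dbar\xi = \psi$; its smoothness and membership in $\kashf{0,q-1}(X;L)$ then follow, whence $H^q(X,L) = 0$.

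First I would correct the $\xi'_\nu$ inductively so that consecutive terms agree more and more closely on the previous sublevel set. Put $\xi_1 := \xi'_1$, and assume $\xi_\nu$ has been built with $\dbar\xi_\nu = \psi$ on $K_\nu$. On $K_\nu$ the difference $\xi'_{\nu+1} - \xi_\nu$ is $\dbar$-closed, hence lies in $\ker_{K_\nu} T_{q-1}$. The Runge-type approximation of Proposition \ref{prop:runge-type-approx}, applied with $c = \nu$ and $c' = \nu+1$, then provides $h_\nu \in \ker_{K_{\nu+1}} T_{q-1}$ whose restriction to $K_\nu$ approximates $\xi'_{\nu+1} - \xi_\nu$ to within any prescribed tolerance in $\norm\cdot_{K_\nu}$. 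Setting $\xi_{\nu+1} := \xi'_{\nu+1} - h_\nu$ preserves $\dbar\xi_{\nu+1} = \psi$ on $K_{\nu+1}$ (because $h_\nu$ is $\dbar$-closed there) while rendering $\norm{\xi_{\nu+1} - \xi_\nu}_{K_\nu}$ as small as I like.

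The decisive point is the choice of these tolerances so that $\seq{\xi_\nu}$ becomes Cauchy on every fixed $K_c$, despite the metrics $\eta_\nu$ on $L$ varying with $\nu$. Using the comparison constants of (\ref{eq:expand-constant}) — which, being governed by the supremum over $\cl K_{\nu+1}$ of the smooth factor relating $\eta_\nu$ and $\eta_{\nu+1}$, apply equally to $(0,q-1)$-forms — and their partial products $M_\nu = \prod_{k=2}^\nu M'_k$, I would require at the $\nu$-th step that $\norm{\xi_{\nu+1} - \xi_\nu}_{K_\nu} < 2^{-\nu} M_\nu^{-1}$. Then for a fixed finite $c$ and an integer $\nu_0 \geq c$, iterating (\ref{eq:expand-constant}) gives $\norm{\xi_{k+1} - \xi_k}_{K_{\nu_0}} \leq (M_k / M_{\nu_0})\, \norm{\xi_{k+1} - \xi_k}_{K_k} < 2^{-k} M_{\nu_0}^{-1}$ for all $k \geq \nu_0$, so $\sum_{k \geq \nu_0} \norm{\xi_{k+1} - \xi_k}_{K_{\nu_0}}$ converges and $\seq{\xi_\nu}$ has a limit in $\hilbbsp{0,q-1}(K_{\nu_0};L)$. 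As $\nu_0$ is arbitrary and the limits are consistent under restriction, they assemble to a locally $L^2$ form $\xi$ on $X$.

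It then remains to verify $\dbar\xi = \psi$ and to invoke regularity. On each $K_{\nu_0}$ one has $\xi_\nu \to \xi$ while $\dbar\xi_\nu = \psi$ for every $\nu \geq \nu_0$; since $\dbar$ is a closed operator, the limit satisfies $\dbar\xi = \psi$ on $K_{\nu_0}$, hence on all of $X$. Finally Remark \ref{rem:regularity} upgrades $\xi$ to a smooth solution and Theorem \ref{rem:Kazama-Dolbeault} places it in $\kashf{0,q-1}(X;L)$, giving $H^q(X,L) = 0$. I expect the main obstacle to lie entirely in the third step: one must keep the Runge tolerances, measured in the $\nu$-dependent metrics $\eta_\nu$, summable after passage to a fixed $K_c$-norm, and the constants $M_\nu$ are exactly what is needed to absorb the accumulating metric distortion.
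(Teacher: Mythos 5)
Your proposal is correct and follows essentially the same route as the paper's own proof: the same inductive correction of the local solutions from Corollary \ref{cor:sol-on-K-nu} via the Runge-type approximation of Proposition \ref{prop:runge-type-approx}, the same choice of tolerances $2^{-\nu}M_\nu^{-1}$ with the comparison constants of (\ref{eq:expand-constant}) absorbing the $\nu$-dependence of the metrics $\eta_\nu$, and the same passage to the limit using closedness of $\dbar$ followed by Remark \ref{rem:regularity} and Theorem \ref{rem:Kazama-Dolbeault}. No gaps to report.
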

\begin{proof}
  Given any $\psi \in \kashf{0,q}(X;L) \cap \ker \dbar$, Corollary
  \ref{cor:sol-on-K-nu} provides a sequence of local solutions
  $\seq{\xi_\nu'}_{\nu \geq 1}$ such that $\xi_\nu' \in
  \hilbbsp{0,q-1}(K_\nu;L)$ and $\dbar\xi_\nu' = \psi|_{K_\nu}$
  for all integers $\nu \geq 1$. 
  First a sequence of local solutions $\seq{\xi_\nu}_{\nu \geq 1}$
  such that $\xi_\nu \in \hilbFB 0{q-1}(K_\nu;L)$,
  $\dbar \xi_\nu = \psi|_{K_\nu}$
  and 
  \begin{equation} \tag{$*$}
    \label{eq:new-sol-properties}
    \norm{\xi_{\nu+1} - \xi_\nu}_{K_\nu} < \frac{1}{M_\nu 2^\nu} 
  \end{equation}
  for all $\nu \geq 1$ is defined inductively as follows. 
  Set $\xi_1 := \xi_1'$. 
  Suppose $\xi_1, \dots, \xi_\nu$ are defined for some $\nu
  \geq 1$. 
  Let $\gamma_\nu' := \xi_{\nu + 1}'|_{K_\nu} - \xi_\nu$. 
  Notice that $\gamma_\nu' \in \ker_{K_\nu} T_{q-1} \subset \hilbFB
  0{q-1}(K_{\nu};L)$. 
  Proposition \ref{prop:runge-type-approx} then
  implies that there exists $\gamma_\nu \in \ker_{K_{\nu+1}} T_{q-1}
  \subset \hilbFB 0{q-1}(K_{\nu+1};L)$
  such that
  \begin{equation*}
    \norm{\gamma_\nu' - \gamma_\nu}_{K_\nu} < \frac{1}{M_\nu 2^\nu} \;
    .
  \end{equation*}
  Set $\xi_{\nu+1} := \xi_{\nu+1}' - \gamma_\nu$.
  Then one has $\dbar \xi_{\nu+1} = \dbar \xi_{\nu+1}' =
  \psi|_{K_{\nu+1}}$ and the inequality (\ref{eq:new-sol-properties})
  is satisfied. 
  The required sequence $\seq{\xi_\nu}_{\nu\geq 1}$ is therefore
  defined.

  Notice that, for every $\nu \geq 1$, the sequence
  $\seq{\xi_\mu|_{K_\nu}}_{\mu\geq \nu}$ converges in
  $\hilbbsp{0,q-1}(K_\nu;L)$. Indeed, for any $\mu \geq \nu \geq 1$ and for
  any integer $k > 0$,
  \begin{equation*}
    \begin{aligned}
      \norm{\xi_{\mu + k} - \xi_\mu}_{K_\nu} 
      &\leq \sum_{r=0}^{k-1} \norm{\xi_{\mu + r+1} - \xi_{\mu + r}}_{K_\nu} \\
      &\leq \sum_{r=0}^{k-1} \frac{M_{\mu+r}}{M_\nu} \norm{\xi_{\mu +
          r+1} - \xi_{\mu + r}}_{K_{\mu + r}} && \text{by
        (\ref{eq:expand-constant})} \; , \\
      &\leq \frac{1}{M_\nu} \sum_{r=0}^{k-1} \frac{1}{2^{\mu+r}}
      &&\text{by (\ref{eq:new-sol-properties})} \; , \\
      &\leq \frac{1}{M_\nu 2^{\mu-1}} \; , 
    \end{aligned}
  \end{equation*}
  which tends to $0$ as $\mu \tendsto \infty$, so
  $\seq{\xi_\mu|_{K_\nu}}_{\mu\geq \nu}$ is a Cauchy sequence in
  $\hilbbsp{0,q-1}(K_\nu;L)$. 
  Let $\xi^{(\nu)}$ be the limit of $\seq{\xi_\mu|_{K_\nu}}_{\mu \geq \nu}$ in
  $\hilbbsp{0,q-1}(K_\nu;L)$. 
  Since $\dbar \xi_\mu|_{K_\nu} = \psi|_{K_\nu}$ for all $\mu \geq
  \nu$, and $\dbar$ is a closed operator, one has $\dbar \xi^{(\nu)} =
  \psi|_{K_\nu}$ for all $\nu \geq 1$. 
  Now notice that restriction from $K_{\nu + 1}$ to $K_\nu$ is
  continuous by (\ref{eq:expand-constant}), so
  \begin{equation*}
    \xi^{(\nu+1)}|_{K_\nu} - \xi^{(\nu)} 
    = \lim_{\substack{\mu \geq \nu+1 \\ \mu \tendsto \infty}}
    \paren{\xi_\mu|_{K_\nu} - \xi_\mu|_{K_\nu}} = 0
  \end{equation*}
  in $\hilbbsp{0,q-1}(K_\nu;L)$. 
  On every $K_\nu$, different choices of
  $\delta_\nu \in \kashf{}(X)$ yield equivalent norms. 
  Therefore, by fixing one $\delta \in \kashf{}(X)$, one can consider
  $\Ltwo{0,q-1}(X;L;\text{loc})$, the space of locally $L^2$
  $L$-valued $(0,q-1)$-forms on $X$, and there exists $\xi' \in
  \Ltwo{0,q-1}(X;L;\text{loc})$ such that 
  \begin{equation*}
    \begin{aligned}
      &\xi'|_{K_\nu} = \xi^{(\nu)} &&\text{for all }\nu \geq 1
      \; ,  \text{ and} \\
      &\dbar \xi' = \psi &&\text{in }\Ltwo{0,q-1}(X;L;\text{loc})
      \; .
    \end{aligned}
  \end{equation*}
  Remark \ref{rem:regularity} then assures that there exists $\xi \in
  \kashf{0,q-1}(X;L)$ such that $\dbar \xi = \psi$ on $X$.

  Since $\psi \in \kashf{0,q}(X;L) \cap \ker \dbar$ is arbitrary, this
  shows that $H^q(X,L) = 0$. This completes the proof.
\end{proof}

}  



\newpage
\begin{multicols}{2}
  \printnomenclature
\end{multicols}

\begin{bibdiv}
  \begin{biblist}

    \bib{Abe_manuscripta}{article}{
  author={Abe, Yukitaka},
  title={Holomorphic sections of line bundles over $(H,C)$-groups},
  journal={Manuscripta Math.},
  volume={60},
  pages={379--385},
  date={1988},
  label={Ab1},
}

\bib{Abe_toroidal}{article}{
  author={Abe, Yukitaka},
  title={On toroidal groups},
  journal={J.~Math.~Soc.~Japan},
  volume={41},
  number={4},
  pages={699--708},
  date={1989},
  label={Ab2},
}

\bib{Abe_nagoya}{article}{
  author={Abe, Yukitaka},
  title={Sur les fonctions p\'eriodiques de plusieurs variables},
  journal={Nagoya Math.~J.},
  volume={122},
  pages={83--144},
  date={1991},
  label={Ab3},
}

\bib{Abe&Kopfermann}{book}{
  author={Abe, Yukitaka},
  author={Kopfermann, Klaus},
  title={Toroidal Groups: Line Bundles, Cohomology and Quasi-Abelian Varieties},
  publisher={Springer-Verlag},
  date={2001},
}

\bib{Andre&Ghera}{article}{
  author={Andreotti, A.},
  author={Gherardelli, F.},
  title={Estensioni commutative di variet\`a abeliane},
  journal={Quaderno manoscritto del Centro di Analisi Globale del CNR, Firenze},
  pages={1--48},
  date={1972},
  reprint={ title={Selecta di opere di Aldo Andreotti: analisi complessa}, author={Andreotti, A.}, volume={2}, publisher={Scuola normale superiore}, address={Pisa}, date={1982}, },
  label={AGh},
}

\bib{Andre&Grauert}{article}{
  author={Andreotti, A.},
  author={Grauert, Hans},
  title={Th\'eor\`emes de finitude pour la cohomologie des espaces complexes},
  journal={Bull.~Soc.~Math.~France},
  volume={90},
  pages={193--259},
  date={1962},
  label={AGr},
}

\bib{Bir&Lange}{book}{
  author={Birkenhake, Christina},
  author={Lange, Herbert},
  title={Complex Abelian varieties},
  edition={2},
  publisher={Springer-Verlag},
  date={2004},
}

\bib{Cap&Cat}{article}{
  author={Capocasa, F.},
  author={Catanese, Fabrizio},
  title={Periodic meromorphic functions},
  journal={Acta.~Math.},
  volume={166},
  pages={27--68},
  date={1991},
}

\bib{Cap&Cat2}{article}{
  author={Capocasa, F.},
  author={Catanese, Fabrizio},
  title={Linear systems on quasi-abelian varieties},
  journal={Math.~Ann.},
  volume={301},
  pages={183--197},
  date={1995},
}

\bib{Cousin}{article}{
  author={Cousin, P.},
  title={Sur les fonctions triplement p\'eriodiques de deux variables},
  journal={Acta Mathematica},
  volume={33},
  number={1},
  pages={105--232},
  date={1910},
  label={Cou},
}

\bib{Demailly}{book}{
  author={Demailly, Jean-Pierre},
  title={Complex analytic and algebraic geometry},
  note={OpenContent Book, {\tt http://www-fourier.ujf-grenoble.fr/\til demailly/manuscripts/agbook.pdf}},
  date={2007},
}

\bib{Demailly-L2}{book}{
  author={Demailly, Jean-Pierre},
  title={$L^2$-estimates for the {$\dbar $}-operator on complex manifolds},
  note={{\tt http://www-fourier.ujf-grenoble.fr/\til demailly/manuscripts/estimations\_l2.pdf}},
  date={1996},
}

\bib{Grauert&Remmert}{book}{
  author={Grauert, Hans},
  author={Remmert, Reinhold},
  title={Theory of Stein spaces},
  publisher={Springer},
  series={Classics in mathematics},
  edition={Reprint of the 1979 edition},
  date={2003},
}

\bib{Hefez}{article}{
  author={Hefez, Abramo},
  title={On periodic meromorphic functions on $\fieldC ^n$},
  journal={Atti Accad.~Naz.~Lincei Rend.},
  volume={64},
  pages={255--259},
  date={1978},
  label={Hef},
}

\bib{Hirzebruch}{book}{
  author={Hirzebruch, Friedrich},
  title={Topological methods in algebraic geometry},
  edition={Second, corrected printing of the third edition},
  publisher={Springer-Verlag},
  date={1978},
  label={Hir},
}

\bib{Hoermander-L2}{article}{
  author={H\"ormander, Lars},
  title={$L^2$ estimates and existence theorems for the $\dbar $ operator},
  journal={Acta.~Math.},
  volume={113},
  pages={89--152},
  date={1965},
  label={H\"or1},
}

\bib{Hoemander-PDE}{book}{
  author={H\"ormander, Lars},
  title={Linear partial differential operators},
  publisher={Springer-Verlag},
  edition={4},
  date={1976},
  label={H\"or2},
}

\bib{Hoemander}{book}{
  author={H\"ormander, Lars},
  title={An introduction to complex analysis in several variables},
  publisher={North-Holland Pub.~Co.},
  edition={2},
  date={1979},
  label={H\"or3},
}

\bib{Huckleberry&Margulis}{article}{
  author={Huckleberry, A. T.},
  author={Margulis, G. A.},
  title={Invariant analytic hypersurfaces},
  journal={Invent.~Math.},
  volume={71},
  number={1},
  pages={235--240},
  date={1983},
}

\bib{Kaup}{article}{
  author={Kaup, Ludger},
  title={Eine K\"unnethformel f\"ur Fr\'echetgarben},
  journal={Math.~Zeitschr.},
  volume={97},
  pages={158--168},
  date={1967},
  label={Kau},
}

\bib{Kazama_pseudoconvex}{article}{
  author={Kazama, Hideaki},
  title={On pseudoconvexity of complex abelian Lie groups},
  journal={J.~Math.~Soc.~Japan},
  volume={25},
  number={2},
  pages={329--333},
  date={1973},
  label={Kaz1},
}

\bib{Kazama}{article}{
  author={Kazama, Hideaki},
  title={$\dbar $ Cohomology of $(H,C)$-groups},
  journal={Publ.~RIMS},
  volume={20},
  pages={297--317},
  date={1984},
  label={Kaz2},
}

\bib{Ka&U_Dolb-isom_earlier}{article}{
  author={Kazama, Hideaki},
  author={Umeno, Takashi},
  title={Some Dolbeault isomorphisms for locally trivial fiber spaces and applications},
  journal={Proc.~Japan Acad.},
  volume={67},
  series={A},
  pages={168--170},
  date={1991},
  label={KU1},
}

\bib{Kazama_Dolbeault-isom}{article}{
  author={Kazama, Hideaki},
  author={Umeno, Takashi},
  title={Dolbeault isomorphisms for holomorphic vector bundles over holomorphic fiber spaces and applications},
  journal={J.~Math.~Soc.~Japan},
  volume={45},
  number={1},
  pages={121--130},
  date={1993},
  label={KU2},
}

\bib{Kempf}{article}{
  author={Kempf, George},
  title={Appendix to D.~Mumford's article ``Varieties defined by quadratic equations''},
  conference={ title={Questions on algebraic varieties}, address={C.I.M.E., Roma}, date={1970}, },
  pages={95--100},
  label={Kem},
}

\bib{Kopfermann}{article}{
  author={Kopfermann, Klaus},
  title={Maximale Untergruppen Abelscher komplexer Liescher Gruppen},
  journal={Schr.~Math.~Inst.~Univ.~M\"unster},
  number={29},
  pages={iii+72 pp.},
  date={1964},
  label={Kop},
}

\bib{Matsushima}{article}{
  author={Matsushima, Y.},
  title={On the intermediate cohomology group of a holomorphic line bundle over a complex torus},
  journal={Osaka J.~Mth.},
  volume={16},
  date={1979},
  pages={617--632},
  label={Ma},
}

\bib{Morimoto1}{article}{
  author={Morimoto, A.},
  title={Non compact complex Lie groups without non-constant holomorphic functions},
  journal={Proc.~Conf.~Complex Analysis (Minneapolis, 1964)},
  date={1965},
  pages={256--272},
  label={Mo1},
}

\bib{Morimoto}{article}{
  author={Morimoto, A.},
  title={On the classification of non compact complex abelian Lie groups},
  journal={Trans.~Amer.~Math.~Soc.},
  volume={123},
  date={1966},
  pages={200--228},
  label={Mo2},
}

\bib{Mumford}{book}{
  author={Mumford, David},
  title={Abelian varieties},
  publisher={Oxford University Press},
  date={1970},
  label={Mum},
}

\bib{Murakami}{article}{
  author={Murakami, Shingo},
  title={A note on cohomology groups of holomorphic line bundles over a complex torus},
  book={ title={Manifolds and Lie groups: papers in honor of Yozo Matsushima}, series={Progress in Mathematics}, volume={14}, publisher={Birkh\"auser}, date={1981}, },
  pages={301--313},
  label={Mur},
}

\bib{Pothering}{thesis}{
  author={Pothering, George Joseph},
  title={Meromorphic function fields of non-compact $\fieldC ^n/\Gamma $},
  type={Ph.D. Thesis, University of Notre Dame},
  pages={142 pp.},
  date={1977},
}

\bib{Siu}{article}{
  author={Siu, Yum-Tong},
  title={Complex-analyticity of harmonic maps, vanishing and Lefschetz theorems},
  journal={J.~Diff.~Geom.},
  volume={17},
  pages={55--138},
  date={1982},
  label={Siu},
}

\bib{Steenrod}{book}{
  author={Steenrod, Norman},
  title={The topology of fibre bundles},
  publisher={Princeton University Press},
  date={1951},
  label={St},
}

\bib{Takayama1}{article}{
  author={Takayama, Shigeharu},
  title={Adjoint linear series on weakly 1-complete K\"ahler manifolds I: global projective embedding},
  journal={Math.~Ann.},
  volume={311},
  pages={501--531},
  date={1998},
  label={Taka1},
}

\bib{Takayama}{article}{
  author={Takayama, Shigeharu},
  title={Adjoint linear series on weakly 1-complete K\"ahler manifolds II: Lefschetz type theorem on quasi-Abelian varieties},
  journal={Math.~Ann.},
  volume={312},
  pages={363--385},
  date={1998},
  label={Taka2},
}

\bib{Takeuchi}{article}{
  author={Takeuchi, Shigeru},
  title={On completeness of holomorphic principal bundles},
  journal={Nagoya Math.~J.},
  volume={57},
  pages={121--138},
  date={1974},
  label={Take},
}

\bib{Umemura}{article}{
  author={Umemura, H.},
  title={Some results in the theory of vector bundles},
  journal={Nagoya Math.~J.},
  volume={52},
  date={1973},
  pages={97--128},
  label={U},
}

\bib{Vogt}{article}{
  author={Vogt, Christian},
  title={Line bundles on toroidal groups},
  journal={J. Reine Angew.~Math.},
  volume={335},
  pages={197--215},
  date={1982},
}

  \end{biblist}
\end{bibdiv}

\end{document}